\newcommand\void[1]       {}
\theoremstyle{definition}
\newtheorem{thm}{Theorem}[section]
\newtheorem{prop}[thm]{Proposition}
\newtheorem{cor}[thm]{Corollary}
\newtheorem{lem}[thm]{Lemma}
\theoremstyle{definition}
\newtheorem{defn}[thm]{Definition}
\newtheorem{expl}[thm]{Example}
\newtheorem{rem}[thm]{Remark}
\newtheorem{notation}[thm]{Notation}
\numberwithin{equation}{section}
\numberwithin{thm}{section}
\newcommand\be            {\begin{equation}}
\newcommand\ee            {\end{equation}}
\newcommand\bea           {\begin{eqnarray}}
\newcommand\eea         {\end{eqnarray}}
\newcommand\bnu          {\begin{enumerate}}
\newcommand\enu          {\end{enumerate}}
\newlength{\fighskip} \fighskip=2pt
\newlength{\figvskip} \figvskip=3pt
\newcommand{\pf}{\begin{proof}}
\newcommand{\epf}{\end{proof}}
\newcommand\Rb            {\mathbb{R}}
\newcommand\Hb            {\mathbb{H}}
\newcommand\mO           {\mathcal{O}}
\newcommand\A           {\EuScript{A}}
\newcommand\B           {\EuScript{B}}
\newcommand\C           {\EuScript{C}}
\newcommand\D           {\EuScript{D}}
\newcommand\E          {\EuScript{E}}
\newcommand\CK         {\EuScript{K}}
\newcommand\CL          {\EuScript{L}}
\newcommand\M          {\EuScript{M}}
\newcommand\N         {\EuScript{N}}
\newcommand\CP         {\EuScript{P}}
\newcommand\CQ         {\EuScript{Q}}
\newcommand\V        {\EuScript{V}}
\newcommand\W        {\EuScript{W}}
\newcommand{\FZ}{\mathfrak{Z}}
\newcommand\id            {\mathrm{id}}
\newcommand\op          {\mathrm{op}}
\newcommand\vect    {\EuScript{V}\mathrm{ec}}
\newcommand\Fun     {\mathrm{Fun}}
\newcommand\LMod  {\mathrm{LMod}}
\newcommand\RMod  {\mathrm{RMod}}
\newcommand\BMod {\mathrm{BMod}}
\newcommand\forget  {\mathbf{f}}
\newcommand\bk       {\mathbb{k}}
\newcommand\Alg    {\mathrm{Alg}}
\newcommand\unit    {\mathbb{1}}
\newcommand{\rev} {\mathrm{rev}}
\newcommand{\homm} {\mathrm{Hom}}
\newcommand{\cat}{\mathrm{Cat}_{\EuScript{E}}^{\mathrm{fs}}} 
\newcommand{\fcat}{\mathrm{Cat}^{\mathrm{fs}}}
\newcommand{\bal}{\mathrm{bal}}
\newcommand{\boe}{\boxtimes_{\E}}
\newcommand\bit {\begin{itemize}}
\newcommand\eit {\end{itemize}}
\newcommand{\MFus}{\EuScript{MF}\mathrm{us}_{/\EuScript{E}}}
\newcommand{\BFus}{\EuScript{BF}\mathrm{us}_{/\EuScript{E}}}
\newcommand{\BFCe}{\mathrm{BFC}_{/\EuScript{E}}}
\newcommand{\NBFCe}{\mathrm{NBFC}_{/\EuScript{E}}}
\tikzset{shorten <>/.style={shorten >=#1,shorten <=#1}}
\title{Integrate on a closed stratified surface}
\author[a,b]{Xiao-Xue Wei \thanks{Email: \href{mailto:xxwei@mail.ustc.edu.cn}{\tt xxwei@mail.ustc.edu.cn}}}
\affil[a]{School of Mathematical Sciences, \authorcr University of Science and Technology of China, Hefei, 230026, China} 
\affil[b]{Shenzhen Institute for Quantum Science and Engineering, \authorcr Southern University of Science and Technology, Shenzhen, 518055, China}
\begin{document}
\date{}
\maketitle
\begin{abstract}
We prove that the Drinfeld center centralized by a symmetric fusion category is a symmetric monoidal functor if we choose proper domain and codomain categories.  
We also compute the factorization homology of stratified surfaces with coefficients satisfying certain anomaly-free conditions.
\newline 
$\textbf{Key words}:$ algebras; centers; stratified surfaces; factorization homology.
\newline
$\textbf{2020 Mathematics Subject Classification}:$  18A30; 18D70; 18M05; 18N20.
\end{abstract}

\tableofcontents

\section{Introduction}
The mathematical theory of factorization homology has been established by Beilinson, Drinfeld\cite{BD, FG}, Lurie \cite{Lu}, Francis \cite{F} and many other people (see for example \cite{CG,AF, AFT1, AFT2, AFR, BBJ, BBJ2}).
Factorization homology takes two input data: an n-manifold $\M$ and an n-disk algebra $\A$ in a symmetric monoidal $\infty$-category $\V$.
The output data $\int_{\M} \A$ is an object in $\V$.
For example, integrate on a compact 1-maifold $S^1$ with an associative algebra $A$. The result $\int_{S^1} A$ is the usual Hochschild homology $HH_{\bullet}(A)$.
 On a compact 2-manifold, the computation is highly nontrivial (see for example \cite{BBJ, BBJ2, Francis}). 
 Motivated by the study of topological orders in condensed matter physics, Ai, Kong and Zheng \cite{LiangFH} integrate a unitary modular tensor category (UMTC) $\A$ on a closed stratified surface $\Sigma$. They get
 $\int_\Sigma \A \simeq (\Hb, u_{\Sigma})$.
Here $\Hb$ is the category of finite-dimensional Hilbert spaces and $u_{\Sigma}$ is a distinguished object in $\Hb$.
In physics, the category $\A$ is the category of anyons in a 2d anomaly-free topological order. 
The result $\int_\Sigma \A$ is a global observable defined on $\Sigma$. 
And this global observable is precisely the ground state degeneracy (GSD) of the 2d topological order on $\Sigma$. This fact remains to be true even if we introduce anomaly-free defects of codimension 1 and 2.
  
A finite onsite symmetry of a 2d symmetry enriched topological (SET) order can be mathematically described by a symmetric fusion category $\E$, and the category of anyons in this SET order can be described by a UMTC over $\E$, which is a unitary braided fusion category with M\"{u}ger center given by $\E$ \cite{TL}. This motivates us to compute the factorization homology on a 2-manifold with the coefficient given by a UMTC over $\E$. 
In \cite{Wei}, Wei integrates a UMTC over $\E$ on a closed stratified surface $\Sigma_g$ with genus $g$. 
The result is a pair $(\E, u_{\Sigma_g})$, where $u_{\Sigma_g}$ is a distinguished object in $\E$.
In this work, we extend the result.
We integrate on any closed stratified surface with anomaly-free coefficients valued in the symmetric monoidal 2-category $\cat$ of finite semisimple module categories over $\E$.

\medskip
The layout of this paper is as follows. In Sec.\,2, we introduce braided $\E$-modules, the tensor product between braided $\E$-modules, and the relation between this tensor product and modular extensions.
In Sec.\,3, we show that the Drinfeld center centralized by $\E$ can be made functorial if we choose a proper domain and codomain categories (see Thm.\,\ref{thm-func-E}).
In Sec.\,4, we compute the factorization homology on any closed stratified surface with anomaly-free coefficients in $\cat$ (see Thm.\,\ref{main-result}).

\bigskip
\noindent $\textbf{Acknowledgement}$ I thank Liang Kong for introducing me to this interesting subject. I am supported by NSFC under Grant No. 11971219 and Guangdong Provincial Key Laboratory (Grant No.2019B121203002).

\section{$\E$-module braidings and braided $\E$-modules}
\subsection{Braided $\E$-modules}
\begin{notation}
Let $\bk$ be an algebraically closed field of characteristic zero.
Let $\E$ be a symmetric fusion category with the braiding $r$.
The category $\vect$ denotes the category of finite dimensional vector spaces over $\bk$ and $\bk$-linear maps.
\end{notation}

Let $\A$ be a monoidal category.
$\A^{\op}$ denotes the monoidal category $\A$,  but the morphism space is $\homm_{\A^{\op}}(a, b) \coloneqq \homm_{\A}(b, a)$ for any objects $a, b \in \A$.
$\A^{\rev}$ denotes the category $\A$ with the reversed tensor product $a \otimes^{\rev} b \coloneqq b \otimes a$ for $a, b \in \A$. 
 A monoidal category $\A$ is rigid if every object $a \in \A$ has a left dual $a^L$ and a right dual $a^R$.
For a braided monoidal category $\A$ with a braiding $c_{a,b}: a \otimes b \rightarrow b \otimes a$,  
 $\overline{\A}$ denotes the monoidal category $\A$ with the anti-braiding $\bar{c}_{a,b} = c^{-1}_{b,a}$. 

A \emph{finite semisimple left $\E$-module} is a finite semisimple $\bk$-linear abelian category $\M$ equipped with a $\bk$-bilinear functor $\odot: \E \times \M \rightarrow \M$, a natural isomorphism $\lambda_{e, e', m}: (e \otimes e') \odot m \simeq e \odot (e' \odot m)$, and a unit isomorphism $l_m: \unit_{\E} \odot m \simeq m$ for all $e, e' \in \E, m \in \M$ and the tensor unit $\unit_{\E} \in \E$ satisfying some natural conditions.
For finite semisimple left $\E$-modules $\M$ and $\N$, a left $\E$-module functor from $\M$ to $\N$ is a pair $(F, s^F)$, where $F: \M \rightarrow \N$ is a $\bk$-linear functor and  $s^F_{e,m}: F(e \odot m) \simeq e \odot F(m)$, $e \in \E$, $m \in \M$, is a natural isomorphism, satisfying some natural conditions.
$\Fun_{\E}(\M, \N)$ denotes the category of left $\E$-module functors $\M \to \N$ and left $\E$-module natural transformations. 
If $\E = \vect$, $\Fun_{\E}(\M, \N)$ is abbreviated to $\Fun(\M, \N)$.
\begin{defn}
\label{defn-2}
Let $\C$ be a finite semisimple left $\E$-module. An \emph{$\E$-module braiding} on $\C$ is a natural isomorphism 
\[ \tau_{e, x}: e \odot x \xrightarrow{\tau^1_{e,x}} x \odot e \xrightarrow{\tau^2_{x, e}} e \odot x \]
 for $e \in \E, x \in \C$ such that $\tau_{\unit_{\E}, x} = \id_{\unit_{\E} \odot x}$ and the diagrams
 \begin{equation} 
 \label{diag-emb-1}
 \begin{split}
 \xymatrixcolsep{0.5pc}
 \vcenter{\xymatrix@=3ex{
 e \odot e' \odot x \ar[rr]^{\tau^1_{e, e' \odot x}} \ar[rd]_{r_{e, e'}, 1} & & e' \odot x \odot e \\
 & e' \odot e \odot x \ar[ru]_{1, \tau^1_{e, x}} &
 }}
 \qquad
 \vcenter{\xymatrix@=3ex{
 e' \odot x \odot e \ar[rd]_{1, \tau^2_{x, e}} \ar[rr]^{\tau^2_{e' \odot x, e}} & & e \odot e' \odot x \\
 & e' \odot e \odot x \ar[ru]_{r_{e',e}, 1} &
 }} 
 \end{split}
 \end{equation}
\begin{equation}
\label{diag-emb-2}
\begin{split}
\xymatrixcolsep{0.5pc}
\vcenter{\xymatrix@=3ex{
e \odot e' \odot x \ar[rr]^{\tau^1_{e \otimes e', x}} \ar[rd]_{1, \tau^1_{e', x}} & & x \odot e \odot e' \\
& e \odot x \odot e'  \ar[ru]_{\tau^1_{e, x}, 1} &
}}
\qquad
\vcenter{\xymatrix@=3ex{
x \odot e \odot e' \ar[rr]^{\tau^2_{x, e \otimes e'}} \ar[rd]_{\tau^2_{x,e}, 1} & & e \odot e' \odot x \\
& e \odot x \odot e' \ar[ru]_{1, \tau^2_{x, e'}} &
}}
 \end{split}
 \end{equation}
 commute for all $e, e' \in \E, x \in \C$. 
 A \emph{braided $\E$-module} is a finite semisimple left $\E$-module with an $\E$-module braiding.
 An $\E$-module braiding is trivial, if $\tau_{e, x} = \id_{e \odot x}$ for all $e \in \E, x \in \C$.
 
 Given two braided $\E$-modules $(\C, \tau^{\C})$ and $(\D, \tau^{\D})$,   
 a \emph{braided $\E$-module functor} is a left $\E$-module functor $(F, s^F): \C \to \D$ such that the diagram
\begin{equation}
\label{braided-E-functor}
\begin{split}
 \xymatrix{
F(e \odot x) \ar[r]^{s^F_{e, x}} \ar[d]_{\tau^{\C}_{e,x}} & e \odot F(x)\ar[d]^{\tau^{\D}_{e, F(x)}} \\
F(e \odot x) \ar[r]_{s^F_{e, x}} & e \odot F(x)
}
\end{split}
\end{equation}
 commutes for $e \in \E, x \in \C$.
We use $\Fun'_{\E}(\C, \D)$ to denote the full subcategory of $\Fun_{\E}(\C, \D)$ formed by braided $\E$-module functors, and left $\E$-module natural transformations.
\end{defn}

\begin{prop}
The following diagrams commute for $e, e' \in \E, x \in \C$:
\begin{equation}
\label{diag-eb-4}
\begin{split}
\xymatrixcolsep{0.5pc}
\xymatrixrowsep{0.5pc}
\vcenter{ \xymatrix@=3ex{
e \odot x \odot e' \ar[rr]^{\tau^1_{e, x \odot e'}} \ar[dr]_{\tau^1_{e, x}, 1} & & x \odot e' \odot e \\
 & x \odot e \odot e' \ar[ru]_{1, r_{e, e'}} &
} }
\qquad 
\vcenter{\xymatrix@=3ex{
x \odot e' \odot e \ar[rr]^{\tau^2_{x \odot e', e}} \ar[rd]_{1, r_{e', e}} & & e \odot x \odot e' \\
& x \odot e \odot e'  \ar[ru]_{\tau^2_{x, e}, 1} &
}}
\end{split}
\end{equation}
\end{prop}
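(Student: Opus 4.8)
The plan is to derive the two diagrams in \eqref{diag-eb-4} from the defining axioms of an $\E$-module braiding, namely \eqref{diag-emb-1} and \eqref{diag-emb-2}, together with the naturality of $\tau^1$ and $\tau^2$ and the symmetry of the braiding $r$ on $\E$. The key observation is that \eqref{diag-emb-1} records the compatibility of $\tau^1$ (resp.\ $\tau^2$) with the \emph{left} $\E$-action appearing in the $\C$-slot, whereas \eqref{diag-eb-4} is exactly its counterpart for the \emph{right} action $x \odot e'$. Since $\C$ is a module over the \emph{symmetric} category $\E$, the right action is tied to the left action through $r$, and it is this identification that lets one transport \eqref{diag-emb-1} into \eqref{diag-eb-4}. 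Moreover the two diagrams of \eqref{diag-eb-4} are mirror images of one another: the first drags $e$ to the right past $x \odot e'$ via $\tau^1$, the second drags $e$ back to the left via $\tau^2$. I would prove the first in full and obtain the second by running the identical argument with the arrows reversed, replacing each appeal to the first triangle of \eqref{diag-emb-1}--\eqref{diag-emb-2} by the corresponding second triangle.

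For the first diagram I would proceed by reducing the right action to the left action. Using the symmetric structure of $\E$, the object $x \odot e'$ is compared with $e' \odot x$, and under this comparison $\tau^1_{e, x \odot e'}$ is matched with $\tau^1_{e, e' \odot x}$. The first triangle of \eqref{diag-emb-1} then expands the latter as $(1_{e'} \odot \tau^1_{e,x}) \circ (r_{e,e'} \odot 1_x)$. It remains to show that, after transporting back along the symmetric comparison, this agrees with the target composite, that is, the left-down edge $\tau^1_{e,x} \odot 1_{e'}$ followed by the right-up edge $1_x \odot r_{e,e'}$. This last step is a naturality square for $\tau^1$ with respect to the $\E$-morphism $r_{e,e'}$, pasted with the comparison isomorphisms; \eqref{diag-emb-2} may also be invoked here to move the braiding across the $\E$-tensor factor if one prefers to expand in that variable.

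The only genuine input beyond formal manipulation is that $r$ is symmetric. The conversion between the left action $e' \odot x$ and the right action $x \odot e'$ contributes one factor of $r_{e',e}$, while the expansion from \eqref{diag-emb-1} contributes $r_{e,e'}$, and these reconcile to the single prescribed factor because $r_{e,e'} \circ r_{e',e} = \id$ and $r$ satisfies the hexagon axiom. I expect the main obstacle to be bookkeeping rather than conceptual: one must carefully reinstate the associativity constraints $\lambda$ and the unit $l$ that are suppressed in the shorthand $e \odot x \odot e'$, and verify that no residual braiding survives beyond the $r_{e,e'}$ demanded by \eqref{diag-eb-4}. Coherence for monoidal categories renders the associators and units unambiguous, so once the two $r$-factors are matched against each other the chase closes exactly.
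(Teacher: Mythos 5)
Your proposal rests on a transport mechanism that does not exist in this setting, so there is a genuine gap. The braiding $r$ of $\E$ is a natural isomorphism between endofunctors of $\E \times \E$; it provides no morphism in $\C$ between $e' \odot x$ and $x \odot e'$. The only comparison between these two objects is the module braiding itself, namely $\tau^1_{e',x} \colon e' \odot x \to x \odot e'$ (with $\tau^2_{x,e'}$ as its companion), so your pivotal step --- ``under the symmetric comparison, $\tau^1_{e, x \odot e'}$ is matched with $\tau^1_{e, e' \odot x}$'' --- is ill-founded as stated. Relatedly, what you call the only genuine input, the cancellation $r_{e,e'} \circ r_{e',e} = \id$, is both unavailable (no such pair of $r$-factors ever arises, since the left/right conversion contributes a $\tau$, not an $r$) and unnecessary: the paper's proof never uses that $r$ is symmetric, only that $\tau^1$ is natural in each variable separately and satisfies the first triangles of (\ref{diag-emb-1}) and (\ref{diag-emb-2}).

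The correct closing move, which is what the paper does, is this. First establish the hexagon (\ref{eq-3}): the composite $(\tau^1_{e',x} \odot 1) \circ \tau^1_{e, e' \odot x}$ equals $(1 \odot r_{e,e'}) \circ (\tau^1_{e,x} \odot 1) \circ (1 \odot \tau^1_{e',x})$, obtained by pasting the first triangle of (\ref{diag-emb-1}), the first triangle of (\ref{diag-emb-2}) (used twice, once with $e$ and $e'$ interchanged), and the naturality square of $\tau^1_{-,x}$ along the $\E$-morphism $r_{e,e'} \colon e \otimes e' \to e' \otimes e$. Then paste onto this the naturality square of $\tau^1_{e,-}$ along the $\C$-morphism $\tau^1_{e',x}$, which reads $\tau^1_{e, x \odot e'} \circ (1 \odot \tau^1_{e',x}) = (\tau^1_{e',x} \odot 1) \circ \tau^1_{e, e' \odot x}$, and cancel the invertible $1 \odot \tau^1_{e',x}$ to get the left diagram of (\ref{diag-eb-4}). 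So the isomorphism cancelled at the end is $\tau^1_{e',x}$, not a pair of braidings in $\E$; once you replace your ``symmetric comparison'' by $\tau^1_{e',x}$ and drop the appeal to $r \circ r = \id$, your outline collapses onto the paper's argument. Your remaining instinct --- prove the left diagram in full and obtain the right one by the mirror chase with $\tau^2$ and the second triangles of (\ref{diag-emb-1}) and (\ref{diag-emb-2}) --- does agree with the paper.
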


\begin{proof}
Consider the following diagram
\begin{equation}
\label{eq-3}
\begin{split}
 \xymatrix@=4ex{
e \odot e' \odot x \ar[r]^{r_{e, e'}, 1} \ar[d]_{1, \tau^1_{e', x}} \ar@/^1.8pc/[rr]^{\tau^1_{e, e' \odot x}} \ar[rd]|{\tau^1_{e \otimes e', x}} & e' \odot e \odot x \ar[r]^{1, \tau^1_{e, x}} \ar[dr]|{\tau^1_{e' \otimes e, x}} & e' \odot x \odot e \ar[d]^{\tau^1_{e', x}, 1} \\
e \odot x \odot e' \ar[r]_{\tau^1_{e, x}, 1} & x \odot e \odot e' \ar[r]_{1, r_{e, e'}} & x \odot e' \odot e 
} 
\end{split}
\end{equation}
The left triangle, right triangle and upper triangle commute by the diagrams (\ref{diag-emb-1}) and (\ref{diag-emb-2}). The middle square commutes by the naturality of $\tau^1_{e, x}$, $e \in \E, x \in \C$.

\[ \xymatrix{
e \odot e' \odot x \ar[rr]^{\tau^1_{e, e' \odot x}} \ar[d]_{1, \tau^1_{e', x}} & &  e' \odot x \odot e \ar[d]^{\tau^1_{e', x}, 1} \\
e \odot x \odot e' \ar@/^1.5pc/[rr]|{\tau^1_{e, x \odot e'}} \ar[r]_{\tau^1_{e, x}, 1} & x \odot e \odot e' \ar[r]_{1, r_{e, e'}} & x \odot e' \odot e 
} \]
By the commutativiry of the diagram (\ref{eq-3}) and the naturality of $\tau^1$, the bottom triangle commutes.
One can check that the right diagram of (\ref{diag-eb-4}) commutes.
\end{proof}

\begin{prop}
The following diagrams commute for $e, e' \in \E, x \in \C$:
\begin{equation}
\label{diag-embxee'}
\begin{split}
\xymatrixcolsep{0.4pc} 
\vcenter{\xymatrix@=3ex{
e \odot e' \odot x \ar[rr]^{\tau^1_{e \otimes e', x}} \ar[rd]_{\tau^1_{e, e' \odot x}} & & x \odot e \odot e' \\
& e' \odot x \odot e  \ar[ru]_{\tau^1_{e', x \odot e}} &
}}
\qquad
\vcenter{\xymatrix@=3ex{
x \odot e \odot e' \ar[rr]^{\tau^2_{x, e \otimes e'}} \ar[rd]_{\tau^2_{x \odot e, e'}} & & e \odot e' \odot x \\
& e' \odot x \odot e \ar[ru]_{\tau^2_{e' \odot x, e}} &
}} 
\end{split}
\end{equation}
\end{prop}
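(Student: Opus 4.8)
The plan is to prove the left triangle of (\ref{diag-embxee'}); the right triangle follows by the same argument with $\tau^1$ replaced by $\tau^2$ and the left triangles of (\ref{diag-emb-2}) and (\ref{diag-eb-4}) replaced by their right-hand counterparts. The left triangle asserts that $\tau^1_{e',\,x\odot e}\circ\tau^1_{e,\,e'\odot x}=\tau^1_{e\otimes e',\,x}$, so the whole proof is a diagram chase: I would rewrite each of the three maps in terms of the elementary braidings $\tau^1_{e,x}$ and $\tau^1_{e',x}$ and the symmetry $r$ of $\E$, and then cancel a redundant pair of symmetries.

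The engine of the argument is the naturality of $\tau^1$ in its $\C$-argument, evaluated at the morphism $\tau^1_{e',x}\colon e'\odot x\to x\odot e'$ of $\C$. This produces the commuting square
\[
(\tau^1_{e',x}\odot 1_e)\circ\tau^1_{e,\,e'\odot x}
=\tau^1_{e,\,x\odot e'}\circ(1_e\odot\tau^1_{e',x}),
\]
both sides being morphisms $e\odot e'\odot x\to x\odot e'\odot e$.

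I would first simplify the right-hand side of this square. Expanding $\tau^1_{e,\,x\odot e'}$ by the left triangle of (\ref{diag-eb-4}) gives $\tau^1_{e,\,x\odot e'}=(1_x\odot r_{e,e'})\circ(\tau^1_{e,x}\odot 1_{e'})$, and then the left triangle of (\ref{diag-emb-2}) collapses $(\tau^1_{e,x}\odot 1_{e'})\circ(1_e\odot\tau^1_{e',x})$ to $\tau^1_{e\otimes e',\,x}$; hence the square becomes $(\tau^1_{e',x}\odot 1_e)\circ\tau^1_{e,\,e'\odot x}=(1_x\odot r_{e,e'})\circ\tau^1_{e\otimes e',\,x}$. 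Next I would expand $\tau^1_{e',\,x\odot e}$ inside the target composite by the left triangle of (\ref{diag-eb-4}) with $e$ and $e'$ interchanged, namely $\tau^1_{e',\,x\odot e}=(1_x\odot r_{e',e})\circ(\tau^1_{e',x}\odot 1_e)$, and substitute the simplified square to get
\[
\tau^1_{e',\,x\odot e}\circ\tau^1_{e,\,e'\odot x}
=(1_x\odot r_{e',e})\circ(\tau^1_{e',x}\odot 1_e)\circ\tau^1_{e,\,e'\odot x}
=(1_x\odot r_{e',e})\circ(1_x\odot r_{e,e'})\circ\tau^1_{e\otimes e',\,x}.
\]

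The only non-formal input, and the step I expect to be the crux, is the symmetry of $\E$: since $r$ is a symmetric braiding, $r_{e',e}\circ r_{e,e'}=1_{e\otimes e'}$, so $(1_x\odot r_{e',e})\circ(1_x\odot r_{e,e'})=1_x\odot(r_{e',e}\circ r_{e,e'})=\id$, and the redundant symmetries cancel, leaving $\tau^1_{e',\,x\odot e}\circ\tau^1_{e,\,e'\odot x}=\tau^1_{e\otimes e',\,x}$. Aside from this cancellation every step is a substitution from an already established triangle, so the only real hazard is bookkeeping: keeping the order of the three tensor factors straight through the chain of rewrites, which the naturality square is precisely designed to control.
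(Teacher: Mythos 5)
Your proof is correct and is essentially the paper's argument: a short diagram chase combining one naturality square for $\tau^1$ with the already-established triangles of (\ref{diag-emb-2}) and (\ref{diag-eb-4}) and the symmetry identity $r_{e',e}\circ r_{e,e'}=\id_{e\otimes e'}$, with the right-hand triangle of (\ref{diag-embxee'}) obtained by the mirror-image argument for $\tau^2$, exactly as the paper leaves it to be checked. The only cosmetic difference is the anchor of the chase: the paper uses naturality of $\tau^1$ in the $\E$-variable at $r_{e,e'}\colon e\otimes e'\to e'\otimes e$ (and so also invokes (\ref{diag-emb-1})), whereas you use naturality in the $\C$-variable at $\tau^1_{e',x}\colon e'\odot x\to x\odot e'$ and apply (\ref{diag-eb-4}) twice instead, both routes cancelling the same redundant pair of symmetries.
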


\begin{proof}
Consider the following diagram:
\[ \xymatrix@=4ex{
e \odot e' \odot x \ar[r]^{\tau^1_{e, e' \odot x}} \ar[d]_{r_{e, e'}, 1} \ar@/^1.7pc/[rr]^{\tau^1_{e \otimes e', x}} & e' \odot x \odot e \ar[r]^{\tau^1_{e', x \odot e}} \ar[rd]_(0.4){\tau^1_{e', x}, 1} & x \odot e \odot e' \ar[d]^{1, r_{e, e'}} \\
e' \odot e \odot x \ar[rr]_{\tau^1_{e' \otimes e, x}} \ar[ru]_(0.6){1, \tau^1_{e, x}} & & x \odot e' \odot e 
} \]
The outward square commutes by the naturality of $\tau^1$. The middle three triangles commute by the diagrams (\ref{diag-emb-1}), (\ref{diag-emb-2}) and (\ref{diag-eb-4}). Then the upper triangle commutes.
Check that the right diagram of (\ref{diag-embxee'}) commutes.
\end{proof}

\begin{prop}
The following diagram commutes for $e, e' \in \E, x \in \C$:
\begin{equation}
\begin{split} 
\label{diag-SL}
\vcenter{\xymatrix{
e \odot e' \odot x \ar[r]^{r_{e, e'}, 1} \ar[d]_{\tau_{e, e' \odot x}} & e' \odot e \odot x \ar[d]^{1, \tau_{e, x}} \\
e \odot e' \odot x \ar[r]_{r_{e, e'}, 1} & e' \odot e \odot x 
}}
\qquad 
 \vcenter{\xymatrix{
e \odot e' \odot x \ar[r]^{1, \tau_{e', x}} \ar[rd]_{\tau_{e \otimes e', x}} & e \odot e' \odot x \ar[d]^{\tau_{e, e' \odot x}}\\
& e \odot e' \odot x 
}} 
\end{split}
\end{equation}
\end{prop}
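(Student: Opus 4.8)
The plan is to reduce both squares in (\ref{diag-SL}) to the relations already established in (\ref{diag-emb-1}), (\ref{diag-emb-2}), (\ref{diag-eb-4}) and (\ref{diag-embxee'}). Throughout I would decompose the full braiding as $\tau_{e,x} = \tau^2_{x,e} \circ \tau^1_{e,x}$ and use that $\E$ is symmetric, so that $r_{e,e'} \circ r_{e',e} = \id_{e' \otimes e}$. This cancellation of the double braiding is exactly what makes the two diagrams commute; for a merely braided $\E$ it would survive and the statement would be false.

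For the left square I would expand $(r_{e,e'} \odot \id_x) \circ \tau_{e, e' \odot x} = (r_{e,e'} \odot \id_x) \circ \tau^2_{e' \odot x, e} \circ \tau^1_{e, e' \odot x}$ and rewrite the two inner factors with the two halves of (\ref{diag-emb-1}), i.e. $\tau^1_{e, e' \odot x} = (\id_{e'} \odot \tau^1_{e,x}) \circ (r_{e,e'} \odot \id_x)$ and $\tau^2_{e' \odot x, e} = (r_{e',e} \odot \id_x) \circ (\id_{e'} \odot \tau^2_{x,e})$. In the resulting string of five factors, the adjacent pair $(r_{e,e'} \odot \id_x) \circ (r_{e',e} \odot \id_x) = (r_{e,e'} \circ r_{e',e}) \odot \id_x$ collapses to the identity by symmetry, the adjacent pair $(\id_{e'} \odot \tau^2_{x,e}) \circ (\id_{e'} \odot \tau^1_{e,x})$ combines into $\id_{e'} \odot \tau_{e,x}$, and the remaining factor $(r_{e,e'} \odot \id_x)$ survives, leaving precisely $(\id_{e'} \odot \tau_{e,x}) \circ (r_{e,e'} \odot \id_x)$, the other side of the square.

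For the right triangle I would first isolate the auxiliary identity $\tau_{e', x \odot e} = \tau_{e',x} \odot \id_e$. This comes from composing the two halves of (\ref{diag-eb-4}) with the roles of $e$ and $e'$ interchanged and again cancelling $r_{e,e'} \circ r_{e',e} = \id$. Then, starting from $\tau_{e, e' \odot x} \circ (\id_e \odot \tau_{e',x})$, I would slide $\id_e \odot \tau_{e',x}$ across $\tau^1_{e, e' \odot x}$ using naturality of $\tau^1_{e,-}$ in its $\C$-variable, turning it into $(\tau_{e',x} \odot \id_e) \circ \tau^1_{e, e' \odot x}$; the auxiliary identity rewrites $\tau_{e',x} \odot \id_e$ as $\tau_{e', x \odot e} = \tau^2_{x \odot e, e'} \circ \tau^1_{e', x \odot e}$. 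Finally the two halves of (\ref{diag-embxee'}) repackage $\tau^2_{e' \odot x, e} \circ \tau^2_{x \odot e, e'}$ and $\tau^1_{e', x \odot e} \circ \tau^1_{e, e' \odot x}$ into $\tau^2_{x, e \otimes e'} \circ \tau^1_{e \otimes e', x} = \tau_{e \otimes e', x}$, as desired.

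The only genuine difficulty is bookkeeping: I must track which tensor slot each $r$, $\tau^1$ and $\tau^2$ occupies, so that the symmetry cancellations and the single naturality step line up correctly. Once the auxiliary identity $\tau_{e', x \odot e} = \tau_{e',x} \odot \id_e$ has been extracted, both diagrams follow from the earlier results with no further computation.
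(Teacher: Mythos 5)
Your proposal is correct, and both halves check out against the definitions. For the left square your argument is, in substance, exactly the paper's: the paper draws a planar diagram whose two cells are precisely your two substitutions from (\ref{diag-emb-1}), and the cancellation $(r_{e,e'}\circ r_{e',e})\odot\id_x=\id$ that you make explicit is used silently there (it is what makes the paper's bottom-right cell commute), so your write-up if anything makes the role of symmetry clearer. For the right triangle you take a genuinely different, and somewhat cleaner, route. The paper proves it by one planar chase assembled from four triangles coming from (\ref{diag-emb-1}), (\ref{diag-emb-2}) and (\ref{diag-eb-4}) plus two naturality squares, one for $\tau^1$ and one for $\tau^2$ (the latter again absorbing a double braiding $r_{e',e}\circ r_{e,e'}=\id$); it never invokes (\ref{diag-embxee'}). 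You instead first extract the whiskering identity $\tau_{e',x\odot e}=\tau_{e',x}\odot\id_e$ from the two halves of (\ref{diag-eb-4}) with symmetry cancellation, use naturality only once (of $\tau^1_{e,-}$ in the $\C$-variable, applied to the endomorphism $\tau_{e',x}$ of $e'\odot x$), and then regroup the resulting four-fold composite $\tau^2_{e'\odot x,e}\circ\tau^2_{x\odot e,e'}\circ\tau^1_{e',x\odot e}\circ\tau^1_{e,e'\odot x}$ via the two halves of (\ref{diag-embxee'}) into $\tau^2_{x,e\otimes e'}\circ\tau^1_{e\otimes e',x}=\tau_{e\otimes e',x}$; I verified the sources and targets of all four factors and the regrouping is legitimate by associativity of composition. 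Since (\ref{diag-embxee'}) is established in the paper immediately before this proposition, your dependence on it is unproblematic. What each approach buys: the paper's chase avoids the auxiliary identity but requires tracking six objects in a two-dimensional diagram and uses both naturalities; your version trades that for a one-dimensional string computation with a single naturality step and a reusable lemma ($\tau$ is compatible with right whiskering by $e$), at the cost of leaning on one more prior proposition. Your side remark that the statement would fail for merely braided $\E$ is consistent with the proof structure, since both routes (and the paper's) genuinely consume $r_{e,e'}\circ r_{e',e}=\id$.
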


\begin{proof}
Consider the diagram:
\[ \xymatrix{
e \odot e' \odot x \ar[r]^{r_{e, e'}, 1} \ar[d]_{\tau^1_{e, e' \odot x}}  & e' \odot e \odot x  \ar[r]^{1, \tau^1_{e, x}} & e' \odot x \odot e \ar[d]^{1, \tau^2_{x,e}} \\
e' \odot x \odot e \ar[r]_{\tau^2_{e' \odot x, e}} \ar[rru]^{1} & e \odot e' \odot x \ar[r]_{r_{e, e'}, 1} & e' \odot e \odot x
} \]
Two squares commute by the diagrams (\ref{diag-emb-1}).
Then the left diagram of (\ref{diag-SL}) commutes.
Consider the diagram:
\[ \xymatrix@=4ex{
& e \odot e' \odot x \ar[r]^{1, \tau^1_{e', x}} \ar[d]_{\tau^1_{e \otimes e', x}} & e \odot x \odot e' \ar[r]^{1, \tau^2_{x, e'}} \ar[ld]_(0.55){\tau^1_{e, x}, 1} \ar[d]^{\tau^1_{e, x \odot e'}}   & e \odot e' \odot x  \ar[dd]^{\tau^1_{e, e' \odot x}} \\
& x \odot e \odot e'  \ar[r]^{1, r_{e, e'}} \ar[d]_{\tau^2_{x, e \otimes e'}} & x \odot e' \odot e  \ar[rd]^{\tau^2_{x, e'}, 1} \ar[d]_{\tau^2_{x, e' \otimes e}} &  \\
& e \odot e' \odot x & e' \odot e \odot x \ar[l]^{r_{e', e},1} & e' \odot x \odot e \ar[l]^{1, \tau^2_{x,e}} \ar@/^1.7pc/[ll]^{\tau^2_{e' \odot x, e}}
} \]
Four triangles commute by the diagrams (\ref{diag-emb-1}), (\ref{diag-emb-2}) and (\ref{diag-eb-4}).
Two squares commute by the naturalities of $\tau^1$ and $\tau^2$.
\end{proof}

The diagram (\ref{diag-SL}) is the condition of the $\E$-module braiding defined by \cite[Def.\,3.5.1]{SL}.

\begin{expl}
\label{expl-double-braiding}
Let $\C$ be a braided fusion category equipped with a braided functor $T_{\C}: \E \to \C$. Then $\C$ is a braided $\E$-module. The $\E$-module braiding on $\C$ is given by
$\tau_{e, x}: T_{\C}(e) \otimes x \xrightarrow{c_{T_{\C}(e), x}} x \otimes T_{\C}(e) \xrightarrow{c_{x, T_{\C}(e)}} T_{\C}(e) \otimes x$, 
where $c$ is the braiding of $\C$.
\end{expl}

\subsection{Tensor product of braided $\E$-modules}

\begin{defn}
\label{defn-tensorp-braidedE-modules}
Let $(\C, \tau^{\C}), (\D, \tau^{\D}), (\W, \tau^{\W})$ be braided $\E$-modules. 
\emph{A braiding-preserved $\E$-bilinear bifunctor} $F: \C \times \D \to \W$ consists of the following data.
\begin{itemize}
\item $F: \C \times \D \to \W$ is an $\E$-bilinear bifunctor.
That is, for each $d \in \D$, $(F(-, d), s^{F1}): \C \to \W$ is a left $\E$-module functor, where 
\[ s^{F1}_{e, c}: F(e \odot c, d) \simeq e \odot F(c, d), \quad\forall e \in \E, c \in \C \]
is a natural isomorphism. For each $g: d \to d'$ in $\D$, $F(-, g): F(-, d) \Rightarrow F(-, d')$ is a left $\E$-module natural transformation.
For each $c \in \C$, $(F(c, -), s^{F2}): \D \to \W$ is a left $\E$-module functor, where 
\[ s^{F2}_{e, d}: F(c, e \odot d) \simeq e \odot F(c, d), \quad \forall e \in \E, d \in \D \]
is a natural isomorphism.
For each $f: c \to c'$ in $\C$, $F(f, -): F(c, -) \Rightarrow F(c', -)$ is a left $\E$-module natural transformation.

\item $F: \C \times \D \to \W$ is a balanced $\E$-module functor, where the balanced $\E$-module structure on $F$ is defined as
\[ b_{c,e,d}: F(c \odot e, d) \xrightarrow{\tau^{\C2}_{c,e}, 1} F(e \odot c, d) \xrightarrow{s^{F1}_{e, c}} e \odot F(c, d) \xrightarrow{(s^{F2}_{e,d})^{-1}} F(c, e \odot d) \]
for $c \in \C, e \in \E, d \in \D$.
\end{itemize}
such that the following diagrams commute for $e \in \E, c \in \C, d \in \D$:
\begin{equation}
\label{eq-braiding-preserved}
\begin{split}
 \xymatrix{
F(e \odot c, d) \ar[r]^{s^{F1}_{e, c}} \ar[d]_{\tau^{\C1}_{e,c}, 1} & e \odot F(c, d) \ar[r]^{(s^{F2}_{e,d})^{-1}} \ar[d]^{\tau^{\W1}_{e, F(c,d)}} & F(c, e \odot d) \ar[d]^{1, \tau^{\D1}_{e,d}}  \\
F(c \odot e, d) \ar[d]_{\tau^{\C2}_{c,e}, 1} & F(c, d) \odot e \ar[d]^{\tau^{\W2}_{F(c,d), e}}  & F(c, d \odot e) \ar[d]^{1, \tau^{\D2}_{d,e}}  \\
F(e \odot c, d) \ar[r]_{s^{F1}_{e,c}} & e \odot F(c, d) \ar[r]_{(s^{F2}_{e,d})^{-1}} & F(c, e \odot d)
} 
\end{split}
\end{equation}
\end{defn}

The category $\Fun^{\bal}_{\E}(\C, \D; \W)$ consists of the following data.
\begin{itemize}
\item Its objects are braiding-preserved $\E$-bilinear bifunctors $\C \times \D \to \W$.
\item A morphism between two braiding-preserved $\E$-bilinear bifunctor $F, G: \C \times \D \rightrightarrows \W$ is a natural transformation $\alpha: F \Rightarrow G$ such that the two diagrams 
    \[ 
    \vcenter{\xymatrix{
    F(c \odot e, d) \ar[r]^{b^F_{c,e,d}} \ar[d]_{\alpha_{c \odot e, d}} & F(c, e \odot d) \ar[d]^{\alpha_{c, e \odot d}} \\
    G(c \odot e, d) \ar[r]_{b^G_{c,e,d}} & G(c, e \odot d)
    }}
    \qquad 
    \vcenter{\xymatrix{
    F(e \odot c, d) \ar[r]^{s^{F1}_{e, c}} \ar[d]_{\alpha_{e \odot c, d}} & e \odot F(c, d) \ar[d]^{1, \alpha_{c, d}} \\
    G(e \odot c, d) \ar[r]_{s^{G1}_{e, c}} & e \odot G(c, d)
    }} \]
commute for $c \in \C, e \in \E, d \in \D$, where $b^F$ and $b^G$ are the balanced $\E$-module structures on $F$ and $G$ respectively. A natural transformation $\alpha: F \Rightarrow G$ is called a balanced $\E$-module natural transformation if $\alpha$ satisfies the left diagram. For each $d \in \D$, $\alpha: (F(-, d), s^{F1}) \Rightarrow (G(-, d), s^{G1})$ is called a left $\E$-module natural transformation if $\alpha$ satisfies the right diagram.
\end{itemize}

\begin{prop}
For each $c \in \C$, $\alpha: (F(c, -), s^{F2}) \Rightarrow (G(c, -), s^{G2})$ is a left $\E$-module natural transformation.
\end{prop}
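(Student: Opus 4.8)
The plan is to obtain the second-variable left $\E$-module condition as a \emph{consequence} of the two conditions already imposed on $\alpha$, rather than as an independent axiom; the point is that the balanced structures $b^F$ and $b^G$ are built, by Definition \ref{defn-tensorp-braidedE-modules}, precisely out of $s^{F1}$, $(s^{F2})^{-1}$ and the braiding $\tau^{\C2}$, so the three pieces of data are coupled. Concretely, what must be shown is that
\[ (1_e \odot \alpha_{c,d}) \circ s^{F2}_{e,d} = s^{G2}_{e,d} \circ \alpha_{c, e \odot d} \]
for all $e \in \E$ and $d \in \D$, which, after inverting the (iso) structure maps $s^{F2}_{e,d}$ and $s^{G2}_{e,d}$, is equivalent to $\alpha_{c, e \odot d} \circ (s^{F2}_{e,d})^{-1} = (s^{G2}_{e,d})^{-1} \circ (1_e \odot \alpha_{c,d})$.

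First I would write out the balanced condition (the left square in the definition of a morphism of $\Fun^{\bal}_{\E}(\C, \D; \W)$) using the explicit formula $b^F_{c,e,d} = (s^{F2}_{e,d})^{-1} \circ s^{F1}_{e,c} \circ F(\tau^{\C2}_{c,e}, 1_d)$, and similarly for $b^G$. I would then bring in two further relations: the naturality square of $\alpha$ at the morphism $(\tau^{\C2}_{c,e}, 1_d)\colon (c \odot e, d) \to (e \odot c, d)$, which reads $\alpha_{e \odot c, d} \circ F(\tau^{\C2}_{c,e}, 1_d) = G(\tau^{\C2}_{c,e}, 1_d) \circ \alpha_{c \odot e, d}$; and the first-variable condition (the right square), namely $(1_e \odot \alpha_{c,d}) \circ s^{F1}_{e,c} = s^{G1}_{e,c} \circ \alpha_{e \odot c, d}$. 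Substituting the naturality square and then the first-variable condition into the right-hand side of the balanced condition rewrites it as
\[ \alpha_{c, e \odot d} \circ (s^{F2}_{e,d})^{-1} \circ s^{F1}_{e,c} \circ F(\tau^{\C2}_{c,e}, 1_d) = (s^{G2}_{e,d})^{-1} \circ (1_e \odot \alpha_{c,d}) \circ s^{F1}_{e,c} \circ F(\tau^{\C2}_{c,e}, 1_d). \]

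The final step is to cancel the common factor $s^{F1}_{e,c} \circ F(\tau^{\C2}_{c,e}, 1_d)$ appearing on the right of both sides. This is legitimate because it is an isomorphism: $s^{F1}_{e,c}$ is a structure isomorphism of the $\E$-module functor $F(-,d)$, and $F(\tau^{\C2}_{c,e}, 1_d)$ is the image under the functor $F(-,d)$ of the isomorphism $\tau^{\C2}_{c,e}$. Cancellation yields exactly $\alpha_{c, e \odot d} \circ (s^{F2}_{e,d})^{-1} = (s^{G2}_{e,d})^{-1} \circ (1_e \odot \alpha_{c,d})$, which, by the reformulation in the first paragraph, is the desired identity. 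I expect the only real subtlety to be bookkeeping: keeping straight which of the three coupled relations removes which structure map, and confirming that the surviving composite is invertible so that it may be cancelled. No deeper input (semisimplicity, or the hexagon diagrams of Definition \ref{defn-2}) is needed, since everything reduces to pasting together the given commuting squares.
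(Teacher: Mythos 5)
Your proposal is correct and is essentially the paper's own proof: the paper pastes together the same three commuting squares (the balanced condition as the outer rectangle, the naturality of $\alpha$ at $(\tau^{\C2}_{c,e}, 1_d)$, and the first-variable $\E$-module condition) and then deduces the third square by cancelling the isomorphism $s^{F1}_{e,c} \circ F(\tau^{\C2}_{c,e}, 1_d)$, exactly as you do. Your equational bookkeeping is just the paper's diagram chase written out in formulas, and your cancellation step is legitimate for the reason you give.
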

\begin{proof}
Consider the diagram:
\[ \xymatrix{
F(c \odot e, d) \ar[r]^{\tau^{\C2}_{c,e}, 1} \ar[d]_{\alpha_{c \odot e, d}} & F(e \odot c, d) \ar[r]^{s^{F1}_{e, c}} \ar[d]_{\alpha_{e \odot c, d}} & e \odot F(c, d) \ar[r]^{(s^{F2}_{e, d})^{-1}} \ar[d]_{1, \alpha_{c, d}} & F(c, e \odot d) \ar[d]^{\alpha_{c, e \odot d}} \\
G(c \odot e, d) \ar[r]_{\tau^{\C2}_{c,e}, 1} & G(e \odot c, d) \ar[r]_{s^{G1}_{e, c}} & e \odot G(c, d) \ar[r]_{(s^{G2}_{e, d})^{-1}} & G(c, e \odot d)
} \]
The outward diagram commutes since $\alpha$ is a balanced $\E$-module natural transformation. The first square commutes by the naturality of $\alpha$. The second square commutes since $\alpha: (F(-, d), s^{F1}) \Rightarrow (G(-, d), s^{G1})$ is a left $\E$-module natural transformation. Then the third square commutes.
\end{proof}

\begin{prop}
For $e_1, e_2 \in \E$, $c \in \C, d \in \D$, the following diagram commutes
\begin{equation}
\label{diag-sf1-sf2}
\begin{split}
 \xymatrix{
F(e_1 \odot c, e_2 \odot d) \ar[r]^{s^{F2}_{e_2, d}} \ar[d]_{s^{F1}_{e_1, c}} & e_2 \odot F(e_1 \odot c, d) \ar[r]^{1, s^{F1}_{e_1, c}} & e_2 \odot e_1 \odot F(c, d) \ar[ld]^{r_{e_2, e_1}, 1} \\
e_1 \odot F(c, e_2 \odot d) \ar[r]_{1, s^{F2}_{e_2, d}} & e_1 \odot e_2 \odot F(c, d) &
} 
\end{split}
\end{equation}
\end{prop}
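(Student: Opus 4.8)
The plan is to read the square (\ref{diag-sf1-sf2}) as the statement that $s^{F2}_{e_2,d}$ is a \emph{left $\E$-module} natural transformation $F(-,e_2\odot d)\Rightarrow e_2\odot F(-,d)$ of functors $\C\to\W$. The target here is the composite of $(F(-,d),s^{F1})$ with the action functor $e_2\odot(-)\colon\W\to\W$, whose module structure at $(e_1,w)$ is $e_2\odot e_1\odot w\xrightarrow{r_{e_2,e_1},\,1}e_1\odot e_2\odot w$; hence the induced module structure of $e_2\odot F(-,d)$ at $(e_1,c)$ is exactly the upper-right path of (\ref{diag-sf1-sf2}), while the source structure followed by $1\odot s^{F2}_{e_2,d}$ is the lower-left path. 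So the proposition says precisely that the two module structures of $F$, which peel the $\E$-actions off the two arguments in \emph{opposite orders}, are reconciled by the symmetry $r_{e_2,e_1}$ of $\E$.

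Because the bilinearity data in Definition \ref{defn-tensorp-braidedE-modules} only constrains one variable at a time, the factor $r_{e_2,e_1}$ cannot come out of $s^{F1}$ and $s^{F2}$ alone; it has to be manufactured from a braiding. First I would use the balanced structure $b_{c,e,d}=(s^{F2}_{e,d})^{-1}\circ s^{F1}_{e,c}\circ(\tau^{\C2}_{c,e},1)$ to express $s^{F2}$ through $s^{F1}$, $\tau^{\C}$ and $b$, thereby eliminating $s^{F2}$ from both paths. Since $F$ is a balanced $\E$-module functor, $b$ is a coherent natural isomorphism $F(c\odot e,d)\cong F(c,e\odot d)$, so this rewriting transports both composites into a single variable: the claim becomes an identity between two ways of pulling the actions $e_1$ and $e_2$ off the object $e_1\odot c\odot e_2$ through $s^{F1}$, modulo reordering inside $\C$. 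I would then reorder $e_1\odot c\odot e_2$ using the module braiding $\tau^{\C}$, applying the identities (\ref{diag-embxee'}) and, decisively, (\ref{diag-SL}) --- the latter being exactly the point at which $\tau^{\C}$ is exchanged for the symmetry $r$ --- and finally pull out the combined action through the $\E$-module-functor associativity of $(F(-,d),s^{F1})$. The braiding-preserved condition (\ref{eq-braiding-preserved}) is kept in reserve to dispose of any residual $\tau^{\D}$ or $\tau^{\W}$ that survive the rewriting.

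Concretely I would assemble one large diagram whose outer boundary is (\ref{diag-sf1-sf2}) and whose interior is tiled by naturality squares for $s^{F1}$ and $s^{F2}$ (available because $F(-,g)$ and $F(f,-)$ are module natural transformations), the defining triangle of $b$, single instances of (\ref{diag-embxee'}) and (\ref{diag-SL}), and the associativity pentagon for $s^{F1}$. The hard part will be the bookkeeping: the balanced structure $b$ already carries a factor of $\tau^{\C2}$, and reordering the actions introduces further module braidings, so the delicate step is to check that all of these $\tau^{\C}$ (and any $\tau^{\D},\tau^{\W}$) cancel in pairs and that exactly one copy of $r_{e_2,e_1}$ survives with the correct orientation --- neither $r_{e_1,e_2}$ nor its inverse. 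Threading the associativity constraints of the three $\E$-actions through this cancellation while keeping the two opposite peeling-orders aligned is where the real work lies; once the module braidings are shown to cancel, (\ref{diag-SL}) forces the surviving symmetry to be precisely $r_{e_2,e_1}$.
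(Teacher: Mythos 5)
Your global reading of the square is correct, and your first move---eliminating $s^{F2}$ through $b_{c,e,d}=(s^{F2}_{e,d})^{-1}\circ s^{F1}_{e,c}\circ(\tau^{\C2}_{c,e},1)$---is essentially the paper's strategy read in reverse. But the argument has a hole at its load-bearing point: the only property of $b$ in your explicit tile list is its \emph{defining} triangle, which by itself is tautological (it merely rewrites $s^{F2}$ in terms of $s^{F1}$, $\tau^{\C2}$ and a symbol $b$, with no new information). What actually powers the proof is the multiplicativity of the balanced structure in the $\E$-variable, i.e.\ the axiom $b_{c,\,e_1\otimes e_2,\,d}=b_{c,\,e_1,\,e_2\odot d}\circ b_{c\odot e_1,\,e_2,\,d}$ contained in the hypothesis that $F$ is a balanced $\E$-module functor. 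In the paper's proof this axiom \emph{is} the outer boundary of the single large diagram, and the desired square \eqref{diag-sf1-sf2} is the one face left over after the interior is tiled by naturality of $s^{F1}$ and $s^{F2}$, the module-functor associativity of \emph{both} $s^{F1}$ and $s^{F2}$ (you list only the $s^{F1}$ pentagon), and braided-module identities. Your phrase ``$b$ is a coherent natural isomorphism'' gestures at this, but since you explicitly defer the entire diagram chase (``where the real work lies''), the proof is never carried out, and the one coherence that makes it go never appears among the tiles you commit to.

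The second, more technical problem is your choice of exchange lemma. The symmetry must be traded against $\tau^{\C2}$ \emph{alone}: the identities needed are \eqref{diag-eb-4} together with \eqref{diag-emb-2}---for instance $\tau^{2}_{x\odot e',\,e}=(\tau^{2}_{x,e},1)\circ(1,r_{e',e})$---and these are exactly what the paper feeds into the top-left square of its diagram (via the auxiliary triangle through $F(c\odot e_2\odot e_1,d)$). Your ``decisive'' lemma \eqref{diag-SL} instead concerns the full double braiding $\tau=\tau^{2}\circ\tau^{1}$, so invoking it injects $\tau^{\C1}$ factors into a computation in which neither the statement \eqref{diag-sf1-sf2} nor the definition of $b$ contains any $\tau^{1}$, and nothing in your toolkit cancels them; one cannot isolate the $\tau^{2}$-versus-$r$ exchange from \eqref{diag-SL} and \eqref{diag-embxee'}, because those identities see $\tau^{1}$ and $\tau^{2}$ only in the composite. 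Relatedly, keeping \eqref{eq-braiding-preserved} ``in reserve'' to kill residual $\tau^{\D}$ or $\tau^{\W}$ is a misdiagnosis: the proposition lives entirely on the $\C$-side of the balancing, the braiding-preservation axiom plays no role in it, and no $\D$- or $\W$-braidings ever arise. Replace \eqref{diag-SL} by \eqref{diag-eb-4} and \eqref{diag-emb-2}, add the $s^{F2}$-associativity and the balancing axiom for $b$ to your tiles, and then actually perform the chase---at which point your outline collapses onto the paper's proof.
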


\begin{proof}
Since $F: \C \times \D \to \W$ is a balanced $\E$-module functor, the following outward diagram commutes
\[ \xymatrix{
F(c \odot (e_1 \otimes e_2), d) \ar[r]^{\tau^{\C2}_{c, e_1 \otimes e_2}, 1} \ar[d]_{\tau^{\C2}_{c \odot e_1, e_2}} & F((e_1 \otimes e_2) \odot c, d) \ar[r]^{s^{F1}_{e_1 \otimes e_2, c}} \ar[d]^{r_{e_1, e_2}, 1} & (e_1 \otimes e_2) \odot F(c, d) \ar[d]^{r_{e_1, e_2}} \ar[r]^{(s^{F2}_{e_1 \otimes e_2, d})^{-1}} & F(c, (e_1 \otimes e_2) \odot d) \ar[ddd]^{s^{F2}_{e_1, e_2 \odot d}}  \\
F(e_2 \odot c \odot e_1, d) \ar[r]_{\tau^{\C2}_{c, e_1}, 1} \ar[d]_{s^{F1}_{e_2, c \odot e_1}} & F(e_2 \odot e_1 \odot c, d) \ar[r]^{s^{F1}_{e_2 \otimes e_1, c}} \ar[d]^{s^{F1}_{e_2, e_1 \odot c}} & e_2 \odot e_1 \odot F(c, d) &   \\
e_2 \odot F(c \odot e_1, d) \ar[d]_{(s^{F2}_{e_2, d})^{-1}} & e_2 \odot F(e_1 \odot c, d) \ar[d]^{(s^{F2}_{e_2, d})^{-1}} \ar[ru]_{1, s^{F1}_{e_1, c}} & & \\
F(c \odot e_1, e_2 \odot d) \ar[r]_{\tau^{\C2}_{c, e_1},1} & F(e_1 \odot c, e_2 \odot d) \ar[rr]_{s^{F1}_{e_1, c}} & & e_1 \odot F(c, e_2 \odot d)   \ar@/_1pc/[luuu]_{1, s^{F2}_{e_2, d}}
} \]
The two triangles commute since $(F(-,d), s^{F1}): \C \to \W$ and $(F(c,-), s^{F2}): \D \to \W$ are left $\E$-module functors. The left-bottom hexagon and middle-top square commute by the naturality of $s^{F1}$ and the left $\E$-module natural isomorphism $F(c \odot e_1, -) \Rightarrow F(e_1 \odot c, -)$ for $\tau^{\C_2}_{c, e_1}: c \odot e_1 \to e_1 \odot c$ in $\C$. 
The left-top square commutes by the commutativity of the diagrams
\[ \xymatrix@=3ex{
F(c \odot (e_1 \otimes e_2), d) \ar[rr]^{\tau^{\C2}_{c, e_1 \otimes e_2}, 1} \ar[dd]_{\tau^{\C2}_{c \odot e_1, e_2}} \ar[rd]^{r_{e_1, e_2}, 1} & & F((e_1 \otimes e_2) \odot c, d) \ar[dd]^{r_{e_1, e_2}, 1}  \\
& F(c \odot e_2 \odot e_1, d) \ar[ld]^{\tau^{\C2}_{c, e_2}, 1} \ar[rd]^{\tau^{\C2}_{c, e_2 \otimes e_1}} & \\
F(e_2 \odot c \odot e_1, d) \ar[rr]_{\tau^{\C2}_{c, e_1},1} & & F(e_2 \odot e_1 \odot c, d)
} \]
The two triangles commute by the diagrams (\ref{diag-emb-2}) and (\ref{diag-eb-4}). The square commutes by the naturality of $\tau^{\C2}$. 
Then we complete the proof.
\end{proof}

\begin{prop}
Let $\hat{b}_{c, e, d}: F(e \odot c, d) \xrightarrow{s^{F1}_{e, c},1} e \odot F(c, d) \xrightarrow{(s^{F2}_{e, d})^{-1}} F(c, e \odot d)$ for $e \in \E$, $c \in \C$, $d \in \D$.
The following diagram commutes for $e_1, e_2 \in \E$:
\begin{equation}
\label{diag-b-b}
\begin{split}
 \xymatrix{
F((e_1 \odot c) \odot e_2, d) \ar[r]^{b_{e_1 \odot c, e_2, d}} \ar[d] & F(e_1 \odot c, e_2 \odot d) \ar[r]^{\hat{b}_{c, e_1, e_2 \odot d}} &  F(c, e_1 \odot e_2 \odot d) \ar[d]^{1, r_{e_1, e_2}} \\
F(e_1 \odot (c \odot e_2), d) \ar[r]_{\hat{b}_{c \odot e_2, e_1, d}} & F(c \odot e_2, e_1 \odot d) \ar[r]_{b_{c, e_2, e_1 \odot d}} & F(c, e_2 \odot e_1 \odot d)
} 
\end{split}
\end{equation}
\end{prop}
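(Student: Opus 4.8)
The unlabeled left vertical arrow is the left-module associativity constraint $(e_1 \odot c) \odot e_2 \to e_1 \odot (c \odot e_2)$ applied in the first slot of $F$; below I suppress associators as the paper does. The plan is to convert the asserted equality of two morphisms $F((e_1 \odot c) \odot e_2, d) \to F(c, e_2 \odot e_1 \odot d)$ into a statement purely about the $\E$-module braiding of $\C$. Since the target object $F(c, e_2 \odot e_1 \odot d)$ is carried isomorphically onto $e_2 \odot e_1 \odot F(c,d)$ by the structure isomorphism $s^{F2}_{e_2 \otimes e_1, d} = (1, s^{F2}_{e_1, d}) \circ s^{F2}_{e_2, e_1 \odot d}$ of the left $\E$-module functor $F(c,-)$, it suffices to show that the two composites of \eqref{diag-b-b} agree after post-composition with this isomorphism.

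First I would expand $b$ and $\hat{b}$ by their definitions and exploit the elementary cancellations $s^{F2}_{e,d} \circ b_{c,e,d} = s^{F1}_{e,c} \circ (\tau^{\C2}_{c,e}, 1)$ and $s^{F2}_{e,d} \circ \hat{b}_{c,e,d} = s^{F1}_{e,c}$, both of which hold because $b$ and $\hat{b}$ terminate in the factor $(s^{F2}_{e,d})^{-1}$. Applying these repeatedly, together with the module-functor coherence (``hexagon'') for $s^{F1}$ and for $s^{F2}$, which merges two consecutive structure isomorphisms for $e_1, e_2$ into a single one for $e_1 \otimes e_2$ or $e_2 \otimes e_1$, and the naturality of $s^{F1}$ and $s^{F2}$ in both the $\C$- and $\D$-variables (used to slide the morphisms $\tau^{\C2}$, the associator, and $r$ through $F$; naturality in the $\C$-variable is exactly the assertion that $F(f,-)$ is a left $\E$-module natural transformation), I expect each composite to collapse to the form $s^{F1}_{e_2 \otimes e_1, c} \circ F(\phi, \id_d)$ for a single morphism $\phi\colon (e_1 \odot c) \odot e_2 \to e_2 \odot (e_1 \odot c)$ in $\C$. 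The one non-formal input in this collapse is diagram \eqref{diag-sf1-sf2}, which interchanges the $s^{F1}$- and $s^{F2}$-structures at the cost of a symmetry $r$; it is precisely the tool needed to bring the ``$e_2$ first'' ordering of the top path and the ``$e_1$ first'' ordering of the bottom path to a common ordering.

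The computation then reduces to comparing the two morphisms $\phi$: for the top path $\phi_{\mathrm{top}} = \tau^{\C2}_{e_1 \odot c, e_2}$, while for the bottom path $\phi_{\mathrm{bot}} = (r_{e_1,e_2} \odot \id_c) \circ (\id_{e_1} \odot \tau^{\C2}_{c,e_2})$ once the associator is absorbed, and the equality $\phi_{\mathrm{top}} = \phi_{\mathrm{bot}}$ is exactly the right-hand diagram of \eqref{diag-emb-1} with $(e', x, e) = (e_1, c, e_2)$. I expect the main obstacle to be bookkeeping rather than conceptual: one must verify that the explicit factor $(1, r_{e_1,e_2})$ in the top-right of \eqref{diag-b-b} and the implicit symmetry generated when invoking \eqref{diag-sf1-sf2} cancel against one another, which they do because $\E$ is symmetric, i.e. $r_{e_1,e_2} \circ r_{e_2,e_1} = \id$. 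Once this cancellation of the two symmetries is tracked correctly, the entire remaining content is the module-braiding axiom \eqref{diag-emb-1}, so the proof requires no coherence beyond those already established.
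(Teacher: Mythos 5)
Your proposal is correct and takes essentially the same route as the paper's proof: both rest on the interchange identity (\ref{diag-sf1-sf2}) as the key prior lemma, the right-hand diagram of (\ref{diag-emb-1}) applied to $\tau^{\C2}_{e_1 \odot c, e_2}$, the module-functor coherences and naturalities of $s^{F1}, s^{F2}$, and the cancellation $r_{e_1,e_2} \circ r_{e_2,e_1} = \id$. The difference is only presentational: you normalize both composites to the form $s^{F1}_{e_2 \otimes e_1, c} \circ F(\phi, \id_d)$ after post-composing with $s^{F2}_{e_2 \otimes e_1, d}$, whereas the paper pastes the same identities into one large diagram factoring through $\hat{b}_{c, e_2 \otimes e_1, d}$ and $\hat{b}_{c, e_1 \otimes e_2, d}$.
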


\begin{proof}
We want to prove the following outward diagram commutes:
\[ \xymatrix@=5ex{
F((e_1 \odot c) \odot e_2, d) \ar[r]  \ar[d]_{\tau^{\C2}_{e_1 \odot c, e_2}, 1}  & F(e_1 \odot (c \odot e_2), d) \ar[r]^{\hat{b}_{c \odot e_2, e_1, d}}\ar[d]_{\tau^{\C2}_{c, e_2},1} & F(c \odot e_2, e_1 \odot d) \ar[d]^{\tau^{\C2}_{c,e_2},1} \\
F(e_2 \odot e_1 \odot c, d) \ar[r]^{r_{e_2,e_1},1} \ar[dd]_{\hat{b}_{e_1 \odot c, e_2, d}} \ar[rd]_{\hat{b}_{c,e_2 \otimes e_1, d}} & F(e_1 \odot e_2 \odot c, d) \ar[r]^{\hat{b}_{e_2 \odot c, e_1, d}} \ar@/^4pc/[dd]^{\hat{b}_{c, e_1 \otimes e_2, d}} & F(e_2 \odot c, e_1 \odot d) \ar[dd]^{\hat{b}_{c, e_2, e_1 \odot d}} \\
& F(c, e_2 \odot e_1 \odot d) \ar[d]_{1,r_{e_2, e_1}} & \\
F(e_1 \odot c, e_2 \odot d) \ar[r]_{\hat{b}_{c, e_1, e_2 \odot d}} & F(c, e_1 \odot e_2 \odot d) \ar[r]_{1, r_{e_1, e_2}} & F(c, e_2 \odot e_1 \odot d) 
} \]
The upper-left square commutes by the diagram (\ref{diag-emb-1}).
The upper-right square commutes by the naturality of $s^{F1}$ and the left $\E$-module natural isomorphism $F(c \odot e_2, -) \Rightarrow F(e_2 \odot c, -)$ for $\tau^{\C2}_{c, e_2}: c \odot e_2 \to e_2 \odot c$ in $\C$.The middle square commutes by the naturalities of $s^{F1}$ and $s^{F2}$.
The left-down square and the right-down square commute by the following diagram:
\[ \xymatrix@=5ex{
F(e_1 \odot e_2 \odot c, d) \ar[r]^{s^{F1}_{e_1 \otimes e_2, c}} \ar[dd]_{s^{F1}_{e_1, e_2 \odot c}} & e_1 \odot e_2 \odot F(c, d) \ar[r]^{(s^{F2}_{e_1 \otimes e_2, d})^{-1}} \ar[d]^{r_{e_1, e_2},1} & F(c, e_1 \odot e_2 \odot d) \ar[d]^{1,r_{e_1,e_2}} \\
  & e_2 \odot e_1 \odot F(c, d) \ar[r]^{(s^{F2}_{e_2 \otimes e_1, d})^{-1}} & F(c, e_2 \odot e_1 \odot d) \\
e_1 \odot F(e_2 \odot c, d) \ar[r]_{(s^{F2}_{e_1, d})^{-1}} \ar@/^1pc/[ruu]^{1,s^{F1}_{e_2,c}}  & F(e_2 \odot c, e_1 \odot d) \ar[r]_{s^{F1}_{e_2,c}} & e_2 \odot F(c, e_1 \odot d) \ar[u]_{(s^{F2}_{e_2, e_1 \odot d})^{-1}} \ar[lu]^(0.6){1,s^{F2}_{e_1, d}}
} \]
The two triangles commute by the left $\E$-module functors $(F(-, d), s^{F1}): \C \to \W$ and $(F(c, -), s^{F2}): \D \to \W$. 
The square commutes by the naturality of $s^{F2}$.
The pentagon commutes by the diagram (\ref{diag-sf1-sf2}).
\end{proof}

\begin{prop}
The balanced $\E$-module structure $b_{c,e,d}$ on $F: \C \times \D \to \W$ is a left $\E$-module natural isomorphism. That is, the diagrams 
\begin{equation}
\label{matrix-tp-bem}
\vcenter{\xymatrix{
F(e' \odot (c \odot e), d) \ar[r]^{s^{F1}_{e', c \odot e}} \ar[d] & e' \odot F(c \odot e, d) \ar[dd]^{1, b_{c,e,d}} \\
F((e' \odot c) \odot e, d) \ar[d]_{b_{e' \odot c, e, d}} & \\
F(e' \odot c, e \odot d) \ar[r]_{s^{F1}_{e', c}} & e' \odot F(c, e \odot d)
}}
\qquad
\vcenter{\xymatrix{
 F(c \odot e, e' \odot d) \ar[r]^{s^{F2}_{e', d}} \ar[d]_{b_{c, e, e' \odot d}} & e' \odot F(c \odot e, d) \ar[dd]^{1, b_{c, e, d}} \\
F(c, e \odot e' \odot d)  \ar[d]_{1, r_{e, e'}} & \\
F(c, e' \odot e \odot d) \ar[r]_{s^{F2}_{e', e \odot d}}  & e' \odot F(c, e \odot d)
}}
\end{equation}
commute for $e' \in \E$.
For each $c \in \C, d \in \D$, we define right $\E$-module functors $(F(-, d), t^{F1}): \C \to \W$ and $(F(c,-), t^{F2}): \D \to \W$, where
\[  t^{F1}_{c, e}: F(c \odot e, d) \xrightarrow{(\tau^{\C1}_{e, c})^{-1}, 1} F(e \odot c, d) \xrightarrow{s^{F1}_{e, c}} e \odot F(c, d) \xrightarrow{\tau^{\W1}_{e, F(c,d)}} F(c, d) \odot e, \quad \forall e \in \E, c \in \C  \]
\[  t^{F2}_{d, e}: F(c, d \odot e) \xrightarrow{1, \tau^{\D2}_{d,e}} F(c, e \odot d) \xrightarrow{s^{F2}_{e,d}} e \odot F(c,d) \xrightarrow{(\tau^{\W2}_{F(c,d), e})^{-1}} F(c, d) \odot e, \quad \forall e \in \E, d \in \D  \]
Then the following diagrams commute:
\begin{equation}
\label{diag-tF12}
\begin{split}
\vcenter{\xymatrix{
F(c \odot e \odot e', d) \ar[r]^{t^{F1}_{c \odot e, e'}} \ar[d]_{r_{e, e'}, 1} & F(c \odot e, d) \odot e'  \ar[dd]^{b_{c, e, d}, 1} \\
F(c \odot e' \odot e, d) \ar[d]_{b_{c \odot e', e, d}}  & \\
F(c \odot e', e \odot d) \ar[r]_{t^{F1}_{c, e'}} & F(c, e \odot d) \odot e' 
}} 
\qquad
\vcenter{\xymatrix{
F(c \odot e,  d \odot e') \ar[r]^{t^{F2}_{d, e'}} \ar[d]^{b_{c, e, d \odot e'}} & F(c \odot e, d) \odot e' \ar[dd]^{b_{c, e, d}, 1} \\
F(c, e \odot (d \odot e')) \ar[d] & \\
F(c, (e \odot d) \odot e') \ar[r]_{t^{F2}_{e \odot d, e'}} & F(c, e \odot d) \odot e'
}} 
\end{split}
\end{equation}
\end{prop}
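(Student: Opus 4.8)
The plan is to establish the two diagrams of (\ref{matrix-tp-bem}) first, since they involve only the module structures $s^{F1}, s^{F2}$ and the module braidings $\tau^{\C}, \tau^{\D}$ of the sources, and then to deduce (\ref{diag-tF12}) from them together with the naturality of the target braiding $\tau^{\W}$. As a preliminary I would check that $(F(-,d), t^{F1})$ and $(F(c,-), t^{F2})$ really are right $\E$-module functors: expanding the definition of $t^{F1}$ and using the $\tau^1$-coherences (\ref{diag-emb-2}) and (\ref{diag-embxee'}) for $\C$, the left $\E$-module functor axiom for $s^{F1}$, and the naturality of $\tau^{\W1}$ yields the right-module associativity axiom, and dually for $t^{F2}$ with $\D$ and $\tau^{\W2}$. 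Throughout I expand $b$, $\hat b$, $t^{F1}$ and $t^{F2}$ into their defining three-arrow composites and treat the unlabelled arrows as the module associators.

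For the left diagram of (\ref{matrix-tp-bem}) I expand both $b_{e'\odot c, e, d}$ and $1\odot b_{c,e,d}$, push $s^{F1}_{e', c\odot e}$ through the inner arrow $1_{e'}\odot\tau^{\C2}_{c,e}$ by naturality of $s^{F1}$, and rewrite $\tau^{\C2}_{e'\odot c, e}$ by the right diagram of (\ref{diag-emb-1}), which gives $\tau^{\C2}_{e'\odot c, e} = (r_{e',e},1)\circ(1_{e'},\tau^{\C2}_{c,e})$. This matches the $\tau$-parts of the two paths and leaves a discrepancy between $s^{F1}_{e'\otimes e, c}$ and $s^{F1}_{e, e'\odot c}\circ F(r_{e',e}\odot 1)$, which I close with the left $\E$-module functor axiom for $s^{F1}$ and the naturality of $s^{F1}$ in the $\E$-variable; the remaining step, exchanging $s^{F1}_{e'}$ with $(s^{F2}_{e,d})^{-1}$, is exactly (\ref{diag-sf1-sf2}). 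The right diagram of (\ref{matrix-tp-bem}) is dual: I first move $\tau^{\C2}_{c,e}$ to the front using that $F(\tau^{\C2}_{c,e}, -)$ is a left $\E$-module natural transformation, and then reduce to the left $\E$-module functor axiom for $s^{F2}$, the symmetry $r$, and the $\tau^{\D2}$-coherence of (\ref{diag-emb-2}) for $\D$.

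To obtain (\ref{diag-tF12}) I substitute the definitions of $t^{F1}$ and $t^{F2}$. In the left diagram I use naturality of $\tau^{\W1}$ with respect to the $\W$-morphism $b_{c,e,d}$ to slide $b_{c,e,d}\odot 1$ across the target braiding; this exposes the composite $(1\odot b_{c,e,d})\circ s^{F1}_{e', c\odot e}$, which is one side of the left diagram of (\ref{matrix-tp-bem}). Replacing it by the other side and cancelling the common isomorphism $\tau^{\W1}_{e', F(c, e\odot d)}\circ s^{F1}_{e', c}$ reduces the square to an identity among the $\tau^{\C}$'s, $r$ and the $b$'s that follows from (\ref{diag-emb-1})--(\ref{diag-embxee'}). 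The right diagram is handled identically, using naturality of $\tau^{\W2}$, the right diagram of (\ref{matrix-tp-bem}), and the $\tau^{\D2}$-coherences for $\D$.

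I expect the only genuinely delicate point to be the middle of the left diagram of (\ref{matrix-tp-bem}): one must organise the intermediate objects so that the symmetry factor $r_{e',e}$ produced by decomposing $\tau^{\C2}_{e'\odot c, e}$ cancels against the factor $r_{e,e'}$ produced by (\ref{diag-sf1-sf2}), which works precisely because $\E$ is symmetric, so that $r_{e',e} = r_{e,e'}^{-1}$. This is the one place where the symmetry of $r$ is used in an essential way; everything else, including the tracking of the module associators, is a formal consequence of naturality and the coherences already established in this section.
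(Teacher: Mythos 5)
Your proposal is correct and, at the global level, is organised exactly as the paper's proof: establish (\ref{matrix-tp-bem}) first, then obtain (\ref{diag-tF12}) by expanding $t^{F1}$, $t^{F2}$ and sliding $b_{c,e,d}$ across the target braiding via naturality of $\tau^{\W1}$, $\tau^{\W2}$, leaving a residue handled by the $\tau^{\C}$-coherence (\ref{diag-eb-4}) on the left and the $\tau^{\D2}$-coherence from (\ref{diag-emb-1}) on the right. Your treatment of the left square of (\ref{matrix-tp-bem}) uses the same ingredients as the paper (naturality of $s^{F1}$, the right half of (\ref{diag-emb-1}), the module-functor axiom for $s^{F1}$, and the pentagon (\ref{diag-sf1-sf2})), and your observation that the two symmetry factors must cancel via $r_{e',e}=r_{e,e'}^{-1}$ is precisely what makes the paper's ``middle square'' commute by naturality of $s^{F1}$ in the $\E$-variable. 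The one genuine divergence is the right square of (\ref{matrix-tp-bem}): the paper does not recompute it, but deduces it from the auxiliary diagram (\ref{diag-b-b}) of the preceding proposition combined with the already-proved left square, whereas you give a direct dual computation, front-loading $F(\tau^{\C2}_{c,e},-)$ as a left $\E$-module natural transformation and then invoking the module-functor axiom for $s^{F2}$ and the symmetry of $r$. Your route does close up --- after the naturality step it reduces to (\ref{diag-sf1-sf2}) together with the $s^{F2}$ axiom and naturality of $s^{F2}$ in the $\E$-variable --- but note two slips in your ingredient list for that square: no $\tau^{\D2}$-coherence enters at all (the square contains no $\D$-braiding; $b$ is built from $\tau^{\C2}$, $s^{F1}$ and $(s^{F2})^{-1}$), and (\ref{diag-sf1-sf2}) is in fact indispensable there, though since you established it while proving the left square the omission is cosmetic. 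What your approach buys is independence from (\ref{diag-b-b}) at the cost of a second diagram chase of comparable size; the paper's approach is shorter given that (\ref{diag-b-b}) is already on record. Your preliminary check that $(F(-,d),t^{F1})$ and $(F(c,-),t^{F2})$ really are right $\E$-module functors is a sensible addition that the paper leaves implicit.
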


\begin{proof}
Proof the left diagram of (\ref{matrix-tp-bem}).  
By the definition of $b_{c,e,d}$, we want to prove the following outward diagram commutes:
\[ \xymatrix{
F(e' \odot (c \odot e), d) \ar[rr]^{s^{F1}_{e', c \odot e}} \ar[d] \ar[rd]^{\tau^{\C2}_{c,e}, 1} & & e' \odot F(c \odot e, d) \ar[d]^{1,\tau^{\C2}_{c, e}} \\
F((e' \odot c) \odot e, d) \ar[d]_{\tau^{\C2}_{e' \odot c, e}, 1} & F(e' \odot e \odot c, d) \ar[ld]_{r_{e', e}, 1} \ar[r]^{s^{F1}_{e', e \odot c}} \ar[rd]_(0.4){s^{F1}_{e' \otimes e, c}} & e' \odot F(e \odot c, d) \ar[d]^{1,s^{F1}_{e, c}} \\
F(e \odot (e' \odot c), d) \ar[d]_{s^{F1}_{e, e' \odot c}} \ar[r]^{s^{F1}_{e \otimes e', c}} & e \odot e' \odot F(c, d) \ar[r]_{r_{e, e'}, 1} & e' \odot e \odot F(c, d) \ar[d]^{1,(s^{F2}_{e, d})^{-1}} \\
e \odot F(e' \odot c, d) \ar[ru]_(0.6){s^{F1}_{e', c}} \ar[r]_{1, (s^{F2}_{e, d})^{-1}} & F(e' \odot c, e \odot d) \ar[r]_{s^{F1}_{e', c}} & e' \odot F(c, e \odot d)
} \]
The left-upper square commutes by the diagram (\ref{diag-emb-1}).
The two triangles commute by the left $\E$-module functor $(F(-, d), s^{F1}): \C \to \W$. The upper square and the middle square commute by the naturality of $s^{F1}$.
The pentagon commutes by the diagram (\ref{diag-sf1-sf2}).
Then the outward diagram commutes.

Proof the right diagram of (\ref{matrix-tp-bem}). Consider the diagram:
\[ \xymatrix{
F((e' \odot c) \odot e, d) \ar[d] \ar[r]^{b_{e' \odot c, e, d}} & F(e' \odot c, e \odot d) \ar[r]^{s^{F1}_{e', c}} & e' \odot F(c, e \odot d) \ar[r]^{1, (s^{F2}_{e', e \odot d})^{-1}} & F(c, e' \odot e \odot d) \ar[d]^{1, r_{e', e}} \\
F(e' \odot (c \odot e), d) \ar[r]_{s^{F1}_{e', c \odot e}} & e' \odot F(c \odot e, d) \ar[r]_{(s^{F2}_{e', d})^{-1}} \ar[ru]_(0.6){1, b_{c, e, d}} & F(c \odot e, e' \odot d) \ar[r]_{b_{c, e, e' \odot d}} & F(c, e \odot e' \odot d)
} \]
The outward diagram commutes by the diagram (\ref{diag-b-b}).
The left pentagon commutes by the left diagram of (\ref{matrix-tp-bem}). Then the right pentagon commutes.

Proof the left diagram of (\ref{diag-tF12}). By the definition of $t^{F1}$, we want to prove the following outward diagram commutes:
\[ 
\xymatrixrowsep{1.2pc}
\xymatrix{
F(c \odot e \odot e', d) \ar[r]^{(\tau^{\C1}_{e', c \odot e})^{-1},1} \ar[d]_{r_{e,e'},1} & F(e' \odot (c \odot e), d) \ar[r]^{s^{F1}_{e',c \odot e}} \ar[d] & e' \odot F(c \odot e, d) \ar[r]^{\tau^{\W1}_{e', F(c \odot e, d)}} \ar[dd]^{1, b_{c,e,d}} & F(c \odot e, d) \odot e' \ar[dd]^{b_{c,e,d},1} \\
F(c \odot e' \odot e, d) \ar[d]_{b_{c \odot e', e, d}} & F((e' \odot c) \odot e, d) \ar[l]_{\tau^{\C1}_{e',c},1} \ar[d]^{b_{e' \odot c, e, d}} & & \\
F(c \odot e', e \odot d) \ar[r]_{(\tau^{\C1}_{e',c})^{-1},1} & F(e' \odot c, e \odot d) \ar[r]_{s^{F1}_{e',c}} & e' \odot F(c, e \odot d) \ar[r]_{\tau^{\W1}_{e', F(c, e \odot d)}} & F(c, e \odot d) \odot e'
} \]
The left-upper square commutes by the diagram (\ref{diag-eb-4}). 
The middle pentagon commutes by the diagram (\ref{matrix-tp-bem}).
The left-bottom and right-most squares commute by the naturalities of $b$ and $\tau^{\W1}$.

Proof the right diagram of (\ref{diag-tF12}). By the definition of $t^{F2}$, we want to prove the following outward diagram commutes:
\[ 
\xymatrixrowsep{1.2pc}
\xymatrix{
F(c \odot e, d \odot e') \ar[r]^{1, \tau^{\D2}_{d,e'}} \ar[d]_{b_{c, e, d \odot e'}} & F(c \odot e, e' \odot d) \ar[d]^{b_{c,e, e' \odot d}} \ar[r]^{s^{F2}_{e', d}} & e' \odot F(c \odot e, d) \ar[dd]^{1, b_{c,e,d}} \ar[r]^{(\tau^{\W2}_{F(c \odot e, d), e'})^{-1}} & F(c \odot e, d) \odot e' \ar[dd]^{b_{c,e,d},1}  \\
F(c, e \odot (d \odot e')) \ar[r]^{1, \tau^{\D2}_{d,e'}} \ar[d] & F(c, e \odot e' \odot d) \ar[d]^{1, r_{e,e'}} & & \\
F(c, (e \odot d) \odot e') \ar[r]_{1, \tau^{\D2}_{e \odot d, e'}} & F(c, e' \odot (e \odot d)) \ar[r]_{s^{F2}_{e', e \odot d}} & e' \odot F(c, e \odot d) \ar[r]_{(\tau^{\W2}_{F(c, e \odot d), e'})^{-1}} & F(c, e \odot d) \odot e'
} \]
The left-upper and right-most squares commute by the naturalities of $b$ and $\tau^{\W2}$. The middle pentagon commutes by the diagram (\ref{matrix-tp-bem}).
The left-bottom square commutes by the diagram (\ref{diag-emb-1}).
\end{proof}

\begin{defn}[\cite{SL}\,Def.\,3.6.1]
Let $(\C, \tau^{\C})$ and $(\D, \tau^{\D})$ be finite braided $\E$-modules.
The \emph{relative tensor product} $\C \boxdot_{\E} \D$ is defined as the full subcategory of $\C \boe \D$ formed by the objects $x$ such that $\tau^{\C}_{e, x} = \tau^{\D}_{e, x}$ for all $e \in \E$.
\end{defn}

\begin{rem}
For $e \in \E$, $c \boe d \in \C \boe \D$,
the left and right $\E$-module structure on $\C \boe \D$ are defined as $e \odot (c \boe d) = (e \odot c) \boe d$ and $(c \boe d) \odot e = c \boe (d \odot e)$.
Two $\E$-module braidings $\tau^{\C}_{e, c \boe d}$ and $\tau^{\D}_{e, c \boe d}$ on $\C \boe \D$ are induced by:
\[ \tau^{\C}_{e, c \boe d}:  (e \odot c) \boe d \xrightarrow{\tau^{\C}_{e, c} \boe \id_d} (e \odot c) \boe d \]
\[ \tau^{\D}_{e, c \boe d}: (e \odot c) \boe d \xrightarrow{(\tau^{\C2}_{c, e})^{-1},1} (c \odot e) \boe d \xrightarrow{b_{c,e,d}} c \boe (e \odot d) \xrightarrow{1, \tau^{\D}_{e,d}} c \boe (e \odot d) \xrightarrow{b^{-1}_{c,e,d}} (c \odot e) \boe d \xrightarrow{\tau^{\C2}_{c,e},1} (e \odot c) \boe d \]
If $\tau^{\C}_{e, c \boe d} = \tau^{\D}_{e, c \boe d}$, the following diagram commutes
\begin{equation}
\label{eq-defn-bEm}
\begin{split}
 \xymatrix{
 (c \odot e) \boe d \ar[d]_{b_{c,e,d}} \ar[r]^{\tau^{\C2}_{c,e}, 1} &(e \odot c) \boe d  \ar[r]^{\tau^{\C1}_{e,c}, 1} & (c \odot e) \boe d \ar[d]^{b_{c,e,d}}  \\
 c \boe (e \odot d) \ar[r]_{1, \tau^{\D1}_{e, d}} & c \boe (d \odot e) \ar[r]_{1, \tau^{\D2}_{d,e}}& c \boe (e \odot d) 
}
\end{split}
\end{equation}
The $\E$-bilinear structure on $\boe: \C \times \D \to \C \boe \D$ is defined by $(s^{F1}_{e, c})^{-1}: e \odot (c \boe d) = (e \odot c) \boe d$ and
$(s^{F2}_{e,d})^{-1}: e \odot (c \boe d) = (e \odot c) \boe d \xrightarrow{(\tau^{\C2}_{c, e})^{-1}, 1} (c \odot e) \boe d \xrightarrow{b_{c, e, d}} c \boe (e \odot d)$.
$c \boe d$ satisfies the diagram (\ref{eq-braiding-preserved}) if and only if $c \boe d$ satisfies the diagram (\ref{eq-defn-bEm}).
If $\tau^{\C}_{e,c} = \id$ and $\tau^{\D}_{e, d} = \id$, then $\C \boxdot_{\E} \D = \C \boe \D$.
\end{rem}

\begin{defn}[\cite{SL}\,Rem.\,3.6.7]
Let $(\C, \tau^{\C})$ and $(\D, \tau^{\D})$ be braided $\E$-modules. The \emph{relative tensor product of $\C$ and $\D$} is a braided $\E$-module $\C \boxdot_{\E} \D$, together with a braiding preserved $\E$-bilinear bifunctor 
$\boxdot_{\E}: \C \times \D \to \C \boxdot_{\E} \D$, such that for every braided $\E$-module $\W$, composition with $\boxdot_{\E}$ induces an equivalence of categories $\Fun'_{\E}(\C \boxdot_{\E} \D, \W) \simeq \Fun^{\bal}_{\E}(\C, \D; \W)$.
\end{defn}

\begin{rem}
The universal property of the relative tensor product is illustrated in the following commutative diagram:
\[ \xymatrix{
\C \times \D \ar[r]^{\boxdot_{\E}} \ar[rd]_{F} &  \C \boxdot_{\E} \D \ar[d]^{\exists ! \underline{F}} \\
& \W
} \]
for $F \in \Fun^{\bal}_{\E}(\C, \D; \W)$, where $\underline{F} \in \Fun'_{\E}(\C \boxdot_{\E} \D, \W)$.
\end{rem}

\begin{expl}
\label{expl-bE-mo}
Let $\A$ be a subcategory of a braided fusion category $\C$.
The \emph{centralizer of $\A$ in $\C$}, denoted by $\A'|_{\C}$, is the full subcategory of objects $x \in \C$ such that $c_{a, x} \circ c_{x, a} = \id_{x \otimes a}$ for all $a \in \A$, where $c$ is the braiding of $\C$.

Let $\B$ be a braided $\E$-module. 
$\E \boxdot_{\E} \B$ is the full subcategory of $\B$ consisting of the objects $x$ such that $\tau^{\B}_{e,x} = \id_{e \odot x}$ for $e \in \E, x \in \B$. 
In particular, when $\B$ is a braided fusion category with a fully faithful braided functor $\E \to \B$, $\E \boxdot_{\E} \B = \E'|_{\B}$ by Expl.\,\ref{expl-double-braiding}.
\end{expl}

\begin{rem}
\label{rem-sun-prop-bE}
Let $\B, \C, \D$ be braided $\E$-modules. The equivalences 
$(\B \boxdot_{\E} \C) \boxdot_{\E} \D \simeq \B \boxdot_{\E} (\C \boxdot_{\E} \D)$, $\C \boxdot_{\E} \D \simeq \D \boxdot_{\E} \C$, $Z(\E) \boxdot_{\E} \C \simeq \C$, and $(\E \boxdot_{\E} \C) \boe (\E \boxdot_{\E} \D) \simeq \E \boxdot_{\E} (\C \boxdot_{\E} \D)$ hold as braided $\E$-modules by \cite[Rem.\,3.6.9, Prop.\,3.6.12, Prop.\,3.6.12]{SL}.
If $\C \simeq \D$ as braided $\E$-modules, $\E \boxdot_{\E} \C \simeq \E \boxdot_{\E} \D$.
\end{rem}

\subsection{Modular extensions}

The following definitions, propositions and lemmas come from \cite[Def.\,3.4.1, Prop.\,3.7.1]{SL} and \cite[Def.\,4.4, Lem.\,4.11, Lem.\,4.16, Thm.\,4.20]{TL}.
\begin{defn}
A \emph{braided multifusion category containing $\E$} is a braided multifusion category $\C$ equipped with a $\bk$-linear braided monoidal functor $\phi_{\C}: \E \to \C$. A braided fusion category $\C$ is fully faithful containing $\E$ if $\phi_{\C}$ is fully faithful.
\end{defn}

Let $\C$ be a braided fusion category with the braiding $c$ and $A$ a connected \'{e}tale algebra in $\C$. 
A right $A$-module $(M, \mu_M: M \otimes A \to M)$ is called \emph{local} if $\mu_M \circ c_{A, M} \circ c_{M, A} = \mu_M$.
The category $\C_A$ denotes the category of right $A$-modules.
The category $\C^0_A$ denotes the full subcategory of $\C_A$ formed by local modules.

Let $\C, \D$ be braided fusion categories fully faithful containing $\E$.
Let $R: \E \to \E \boxtimes \E$ be the right adjoint functor of the tensor product functor $\otimes: \E \boxtimes \E \to \E$. Then $L_{\E} \coloneqq R(\unit_{\E})$ is a connected \'{e}tale algebra in $\E \boxtimes \E$. Let $\mO(\E)$ denote the set of simple objects in $\E$. $L_{\E}$ has a decomposition $L_{\E} = \oplus_{i \in \mO(\E)} i^L \boxtimes i$. 

\begin{prop}[\cite{SL}\,Prop.\,3.7.1]
\label{prop-3.7.1}
The equivalence $(\C \boxtimes \D)_{L_{\E}} \cong \C \boe \D$ restricts to an equivalence $(\C \boxtimes \D)^0_{L_{\E}} \cong \C \boxdot_{\E} \D$.
\end{prop}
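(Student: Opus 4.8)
The plan is to take the unsuperscripted equivalence $\Phi\colon (\C \boxtimes \D)_{L_{\E}} \xrightarrow{\sim} \C \boe \D$ as given (this is precisely the part of the statement that is not about local modules) and then to identify, \emph{objectwise}, the locality condition on the left with the defining condition of $\C \boxdot_{\E} \D$ on the right. Recall that $(\C \boxtimes \D)^0_{L_{\E}}$ is by definition the full subcategory of $(\C \boxtimes \D)_{L_{\E}}$ on the local modules, and that $\C \boxdot_{\E} \D$ is the full subcategory of $\C \boe \D$ on the objects $x$ with $\tau^{\C}_{e, x} = \tau^{\D}_{e, x}$ for all $e \in \E$. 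Since an equivalence restricts to an equivalence between full subcategories as soon as it matches their objects, the whole statement reduces to one claim: a right $L_{\E}$-module $M$ is local if and only if $\Phi(M)$ satisfies $\tau^{\C}_{e, \Phi(M)} = \tau^{\D}_{e, \Phi(M)}$ for every $e \in \E$.

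To set up the comparison I would first record the two $\E$-module braidings on $\C \boe \D$ explicitly. By Expl.\,\ref{expl-double-braiding}, the braiding of $\C$ endows $\C$ (hence its image in $\C \boe \D$) with the $\E$-module braiding given by the double braiding $c_{x, \phi_{\C}(e)} \circ c_{\phi_{\C}(e), x}$, and symmetrically for $\D$; through $\Phi$ and the balancing isomorphism $b_{c,e,d}$ of the relative tensor product these become $\tau^{\C}$ and $\tau^{\D}$ on $\C \boe \D$. On the other side, locality of $M$ is the single equation $\mu_M \circ c_{L_{\E}, M} \circ c_{M, L_{\E}} = \mu_M$, i.e. invariance of the action under the monodromy with $L_{\E}$. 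Because $L_{\E} = \oplus_{i \in \mO(\E)} i^L \boxtimes i$ and the braiding of $\C \boxtimes \D$ is the Deligne product of the braidings of $\C$ and $\D$, this monodromy splits over the simple summands, and for the $i$-th summand it is the Deligne product of the $\C$-monodromy with $\phi_{\C}(i^L)$ and the $\D$-monodromy with $\phi_{\D}(i)$.

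The heart of the argument is then to match these two descriptions. Checking locality on each summand $\phi_{\C}(i^L) \boxtimes \phi_{\D}(i)$ separately, I would use that $i^L$ is the dual of $i$ together with the balancing $b$ (which transports the left $\E$-action coming from $\C$ to the right $\E$-action coming from $\D$) to rewrite the diagonal monodromy equation as the equality of the $\C$-double braiding and the $\D$-double braiding acting on $\Phi(M)$, that is, as $\tau^{\C}_{i, \Phi(M)} = \tau^{\D}_{i, \Phi(M)}$. Since every object of $\E$ is a finite direct sum of the simples $i \in \mO(\E)$ and both braidings are natural in $e$, verifying the condition on the simple $i$'s is equivalent to verifying it for all $e \in \E$, which closes the equivalence.

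I expect the genuine obstacle to be exactly this last matching step: carefully tracking the dual leg $i^L$ through the braiding and through the balancing so that the ``diagonal'' locality condition on $\C \boxtimes \D$ becomes the ``coincidence of braidings'' condition $\tau^{\C} = \tau^{\D}$, rather than, say, their mutual inverse. This is where the normalizations of $\phi_{\C}$ and $\phi_{\D}$, of the evaluation and coevaluation witnessing $i^L = i^{\vee}$, and of the balancing isomorphism $b_{c,e,d}$ of Def.\,\ref{defn-tensorp-braidedE-modules} must all be made to agree. The remainder of the proof is formal, relying only on the principle that an equivalence descends to the full subcategories cut out by objectwise conditions.
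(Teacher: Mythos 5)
Your overall strategy is sound, but note first that the paper itself contains no proof of Prop.\,\ref{prop-3.7.1}: it is imported verbatim from \cite[Prop.\,3.7.1]{SL} (as the attribution and the sentence introducing Section 2.3 make explicit), so there is no in-paper argument to compare against; the comparison can only be with the cited source and with the surrounding material the paper does develop. Measured that way, your reduction is exactly right: both $(\C \boxtimes \D)^0_{L_{\E}}$ and $\C \boxdot_{\E} \D$ are full subcategories cut out by isomorphism-invariant objectwise conditions, so everything rests on the single claim that $M$ is local if and only if $\tau^{\C}_{e, \Phi(M)} = \tau^{\D}_{e, \Phi(M)}$ for all $e$, and your decomposition of the monodromy over the simple summands of $L_{\E} = \oplus_{i \in \mO(\E)} i^{L} \boxtimes i$, followed by reduction from general $e$ to simples by additivity and naturality, is the standard route and the one taken in \cite{SL}.

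The one place where your write-up stops short of a proof is the step you yourself flag: rewriting ``action $\circ$ monodromy $=$ action'' as $\tau^{\C} = \tau^{\D}$, i.e.\ tracking the dual leg $i^{L}$ through the braiding and the balancing. You describe this as the expected obstacle but do not carry it out, so as it stands the proposal is a plan rather than a complete argument. Two remarks close it. First, the sign worry resolves in the right direction: bending the $i^{L}$-strand into an $i$-strand inverts its monodromy, and it is precisely this inversion, combined with the contraction of $i^{L}$ against $i$ by the module action, that turns the diagonal locality equation into the \emph{equality} $\tau^{\C}_{i} = \tau^{\D}_{i}$ rather than mutual inverseness. Second, the paper has already done part of this bookkeeping for you: the remark following the definition of $\boxdot_{\E}$ unpacks the condition $\tau^{\C}_{e, c \boe d} = \tau^{\D}_{e, c \boe d}$ into the balancing-compatibility diagram (\ref{eq-defn-bEm}), which states that $b_{c,e,d}$ intertwines the $\C$-side and $\D$-side monodromies --- and that is exactly the form in which the locality equation lands after your summand-wise decomposition, since under $\Phi$ the $L_{\E}$-action is the balancing. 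Routing your missing computation through (\ref{eq-defn-bEm}) would complete the proof with the normalizations of Def.\,\ref{defn-tensorp-braidedE-modules} already fixed by the paper.
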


\begin{defn}
Let $\C$ be a non-degenerate braided fusion category over $\E$ ($\NBFCe$, see Def.\,\ref{defn-braided-over-E}). A modular extension of $\C$ is a pair $(\M, \iota_{\M})$, where $\M$ is a braided fusion category and $\iota_{\M}: \C \hookrightarrow \M$ is a $\bk$-linear braided full embedding, such that $\M' = \vect$ and $\E \boxdot_{\E} \M = \C$.
\end{defn}

We have the equation $\E \boxdot_{\E} \M = \E'|_{\M} = \C$ by Expl.\,\ref{expl-bE-mo}.
The set $\M_{ext}(\C)$ denotes the equivalence classes of the modular extensions of $\C$. 

\begin{lem}
\label{lem-modular-tp}
Let $\C, \D$ be two $\NBFCe$.
If $\M_{ext}(\C)$ and $\M_{ext}(\D)$ are not empty, then $\M_{ext}(\C \boe \D)$ is not empty, and there is a well-defined map
\[ \boe^{(-,-)}: \M_{ext}(\C) \times \M_{ext}(\D) \to \M_{ext}(\C \boe \D). \]
More explicitly, let $(\M, \iota_{\M}: \C \hookrightarrow \M)$ and $(\N, \iota_{\N}: \D \hookrightarrow \N)$ be the modular extensions of $\C$ and $\D$  respectively. Then $\C \boe \D$ is a $\NBFCe$ and the pair
\[ \M \boe^{(\iota_{\M}, \iota_{\N})} \N \coloneqq \big( \M \boxdot_{\E} \N, \iota_{\M} \boe \iota_{\N}: \C \boxdot_{\E} \D \hookrightarrow \M \boxdot_{\E} \N \big) \]
is the modular extension of $\C \boe \D$.
\end{lem}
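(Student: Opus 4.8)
The plan is to present $\M \boxdot_{\E} \N$ as a category of local modules over a connected \'etale algebra, so that the non-degeneracy required of a modular extension becomes the standard non-degeneracy theorem for such module categories, and then to read off the centralizer condition from the structural equivalences of Rem.~\ref{rem-sun-prop-bE}.

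First I would dispose of a bookkeeping point: because $\C$ and $\D$ are $\NBFCe$, the image of $\E$ lies in their M\"uger centers, so all double braidings with objects of $\E$ are trivial and, by Expl.~\ref{expl-double-braiding}, the $\E$-module braidings $\tau^{\C},\tau^{\D}$ vanish. By the remark that $\C\boxdot_{\E}\D=\C\boe\D$ when $\tau^{\C}$ and $\tau^{\D}$ are trivial, this identifies $\C \boxdot_{\E} \D$ with $\C \boe \D$; likewise $\E \boxdot_{\E}\C = \E'|_{\C} = \C$ and $\E \boxdot_{\E}\D = \D$ by Expl.~\ref{expl-bE-mo}, since $\E$ sitting in the M\"uger center forces $\E'|_{\C}=\C$. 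This reconciles the two products appearing in the statement and lets me treat $\iota_{\M}\boe\iota_{\N}$ as a functor out of $\C\boe\D$.

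Next comes the main step, the non-degeneracy of $\M \boxdot_{\E}\N$. Since $\M'=\N'=\vect$, the Deligne product is non-degenerate: $(\M \boxtimes \N)' = \M'\boxtimes\N' = \vect$. The embeddings $\E \to \M$ and $\E \to \N$ assemble into a braided full embedding $\E\boxtimes\E \hookrightarrow \M\boxtimes\N$ taking the connected \'etale algebra $L_{\E}$ to a connected \'etale algebra of $\M\boxtimes\N$. By the non-degeneracy theorem for local modules over a connected \'etale algebra in a non-degenerate braided fusion category (as invoked in \cite{TL,SL}), $(\M\boxtimes\N)^0_{L_{\E}}$ is non-degenerate braided fusion; by Prop.~\ref{prop-3.7.1} it is $\M\boxdot_{\E}\N$, so $(\M\boxdot_{\E}\N)' = \vect$. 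The same functor $\iota_{\M}\boxtimes\iota_{\N}: \C\boxtimes\D \to \M\boxtimes\N$ is braided, fully faithful, and carries $L_{\E}$ to $L_{\E}$, hence induces a braided full embedding of local-module categories which, under Prop.~\ref{prop-3.7.1}, is exactly $\iota_{\M}\boe\iota_{\N}: \C\boxdot_{\E}\D \hookrightarrow \M\boxdot_{\E}\N$.

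To identify the centralizer I would invoke Rem.~\ref{rem-sun-prop-bE}: with $\C = \E\boxdot_{\E}\M$ and $\D = \E\boxdot_{\E}\N$, the equivalence $(\E\boxdot_{\E}\M)\boe(\E\boxdot_{\E}\N) \simeq \E\boxdot_{\E}(\M\boxdot_{\E}\N)$ reads
\[ \C \boe \D \;\simeq\; \E\boxdot_{\E}(\M\boxdot_{\E}\N) \;=\; \E'|_{\M\boxdot_{\E}\N}, \]
and one checks this equivalence is realized by $\iota_{\M}\boe\iota_{\N}$, so the image of the embedding is precisely the centralizer. Finally, once $\M\boxdot_{\E}\N$ is known to be non-degenerate with $\E$ a symmetric subcategory, the double centralizer theorem forces $\E'|_{\M\boxdot_{\E}\N} = \C\boe\D$ to have M\"uger center exactly $\E$; thus $\C\boe\D$ is itself an $\NBFCe$, and $(\M\boxdot_{\E}\N,\ \iota_{\M}\boe\iota_{\N})$ meets every clause of the definition of a modular extension of $\C\boe\D$, proving $\M_{ext}(\C\boe\D)\neq\emptyset$. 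Well-definedness of $\boe^{(-,-)}$ on equivalence classes then follows from the functoriality of $\boxdot_{\E}$ under equivalences of braided $\E$-modules (Rem.~\ref{rem-sun-prop-bE}).

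I expect the genuine difficulties to be twofold. The first is the non-degeneracy of the local-module category, which is not established anywhere in the excerpt and must be imported as the Kirillov--Ostrik/DMNO theorem. The second, more delicate for a careful write-up, is the compatibility claim above: that the abstract equivalence $\E\boxdot_{\E}(\M\boxdot_{\E}\N)\simeq\C\boe\D$ coincides with the concrete functor $\iota_{\M}\boe\iota_{\N}$. Verifying this amounts to tracking the module-category identifications of Prop.~\ref{prop-3.7.1} through the construction of the induced embedding, and this is where I would expect to spend most of the effort.
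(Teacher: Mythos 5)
Your proposal is correct and takes essentially the same route as the paper: the paper's proof of Lem.~\ref{lem-modular-tp} consists precisely of citing \cite[Lem.~4.11]{TL} together with Prop.~\ref{prop-3.7.1} (the presentation of $\M \boxdot_{\E} \N$ as $(\M \boxtimes \N)^0_{L_{\E}}$) and the equivalence $\E \boxdot_{\E}(\M \boxdot_{\E} \N) \simeq (\E \boxdot_{\E} \M) \boe (\E \boxdot_{\E} \N) = \C \boe \D$ of Rem.~\ref{rem-sun-prop-bE}, which are exactly your three ingredients. The only difference is packaging: where you invoke the Kirillov--Ostrik/DMNO non-degeneracy theorem for local modules over $L_{\E}$ directly (and spell out the induced embedding and the double-centralizer argument), the paper outsources those steps wholesale to \cite[Lem.~4.11]{TL}.
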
 

\begin{proof}
We have the conclusion by \cite[Lem.\,4.11]{TL}, Prop.\,\ref{prop-3.7.1} and the equivalence
$\E \boxdot_{\E} (\M \boxdot_{\E} \N) \simeq (\E \boxdot_{\E} \M) \boe (\E \boxdot_{\E} \N) = \C \boe \D$.
\end{proof}

Let $Z(\E)$ be the Drinfeld center of $\E$. The objects of $Z(\E)$ are pairs $(e, r_{e,-})$, where $e \in \E$ and $r_{e, -}: e \otimes - \simeq - \otimes e, - \in \E$ is a natural isomorphism satisfying some conditions.
There is a canonical embedding $\iota_0: \E \hookrightarrow Z(\E)$, $e \mapsto (e, r_{e,-})$.
The pair $(Z(\E), \iota_0)$ gives a modular extension of $\E$.
\begin{prop}
The multiplication $\boe^{(-,-)}$ is associative, commutative and unital.
More explicitly, Let $(\CL, \iota_{\CL}: \B \hookrightarrow \CL)$, $(\M, \iota_{\M}: \C \hookrightarrow \M)$ and $(\N, \iota_{\N}: \D \hookrightarrow \N)$ be three modular extensions of the $\NBFCe$'s $\B, \C$ and $\D$ respectively. Then the following equivalences hold:
\[ \CL \boe^{(\iota_{\CL}, \iota_{\M} \boe \iota_{\N})} (\M \boe^{(\iota_{\M},\iota_{\N})} \N) \simeq (\CL \boe^{(\iota_{\CL}, \iota_{\M})} \M) \boe^{(\iota_{\CL} \boe \iota_{\M}, \iota_{\N})} \N \]
\[ \M \boe^{(\iota_{\M}, \iota_{\N})} \N \simeq \N \boe^{(\iota_{\N}, \iota_{\M})} \M,  \quad \quad  Z(\E) \boe^{(\iota_0, \iota_{\M})} \M \simeq (\M,\iota_{\M}). \]
\end{prop}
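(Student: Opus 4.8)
The plan is to deduce all three identities from the structural equivalences of the underlying relative tensor product recorded in Remark~\ref{rem-sun-prop-bE}, upgrading each to an equivalence of modular extensions by checking that it intertwines the canonical embeddings. By Lemma~\ref{lem-modular-tp} the pair $\M \boe^{(\iota_{\M}, \iota_{\N})} \N$ has underlying braided fusion category $\M \boxdot_{\E} \N$ with embedding $\iota_{\M} \boe \iota_{\N}$, and each of the three expressions appearing in the statement is already a genuine modular extension by the same lemma. Hence an equivalence of modular extensions amounts to a braided equivalence of the underlying categories together with a compatible isomorphism of the two embeddings, and it suffices to produce such data in each case.

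For associativity, Remark~\ref{rem-sun-prop-bE} supplies a braided $\E$-module equivalence $\Phi\colon (\CL \boxdot_{\E} \M) \boxdot_{\E} \N \xrightarrow{\simeq} \CL \boxdot_{\E} (\M \boxdot_{\E} \N)$, which by its construction in \cite{SL} is a braided monoidal equivalence. The content to verify is that $\Phi$ carries the embedding $(\iota_{\CL} \boe \iota_{\M}) \boe \iota_{\N}$ to $\iota_{\CL} \boe (\iota_{\M} \boe \iota_{\N})$, over the associativity equivalence $(\B \boe \C) \boe \D \simeq \B \boe (\C \boe \D)$ of the base $\NBFCe$'s. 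I would obtain this from the bifunctoriality of $\boxdot_{\E}$: the braided $\E$-module functors $\iota_{\CL}, \iota_{\M}, \iota_{\N}$ induce the two iterated functors, and $\Phi$ is a component of a natural isomorphism between the two iterations of $\boxdot_{\E}$, so its naturality square evaluated at $(\iota_{\CL}, \iota_{\M}, \iota_{\N})$ is precisely the required compatibility. Equivalently, one may construct $\Phi$ directly through the universal property of $\boxdot_{\E}$ in Definition~\ref{defn-tensorp-braidedE-modules}, whence compatibility with the embeddings is forced by the uniqueness clause $\exists!\,\underline{F}$.

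The commutativity and unitality identities follow by the same mechanism. For commutativity, the braiding equivalence $\M \boxdot_{\E} \N \simeq \N \boxdot_{\E} \M$ of Remark~\ref{rem-sun-prop-bE} is natural in both variables, so evaluated at $\iota_{\M}$ and $\iota_{\N}$ it intertwines $\iota_{\M} \boe \iota_{\N}$ with $\iota_{\N} \boe \iota_{\M}$, giving $\M \boe^{(\iota_{\M}, \iota_{\N})} \N \simeq \N \boe^{(\iota_{\N}, \iota_{\M})} \M$. For unitality, I would use that $(Z(\E), \iota_0)$ is the modular extension of the unit $\NBFCe$ $\E$ and that $Z(\E)$ is the unit for $\boxdot_{\E}$ by Remark~\ref{rem-sun-prop-bE}; the left unitor $Z(\E) \boxdot_{\E} \M \simeq \M$ is then a braided equivalence, and its naturality together with the base identity $\E \boe \C \simeq \C$ shows that it sends the embedding $\iota_0 \boe \iota_{\M}$ to $\iota_{\M}$, yielding $Z(\E) \boe^{(\iota_0, \iota_{\M})} \M \simeq (\M, \iota_{\M})$.

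The main obstacle is the compatibility-with-embeddings step: one must establish that the equivalences imported from \cite{SL} are genuinely \emph{natural} with respect to braided $\E$-module functors---in particular with respect to the fully faithful embeddings $\iota_{\bullet}$---so that the naturality squares invoked above commute up to coherent isomorphism. I expect the cleanest route is to package the associator, braiding and unitor as the coherence data of the tensor product $\boxdot_{\E}$ on the $2$-category of braided $\E$-modules, so that the three identities become the image of the corresponding coherence equivalences under the assignment $(-)\mapsto(-,\iota_{\bullet})$; once this is in place, no further manipulation of the explicit structure maps $\tau$, $b$, $s^{F1}$, $s^{F2}$ of Definition~\ref{defn-tensorp-braidedE-modules} is required.
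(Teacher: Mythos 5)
Your proposal is correct, and its backbone coincides with the paper's: both deduce all three identities from the braided $\E$-module equivalences of Rem.~\ref{rem-sun-prop-bE}, with Lem.~\ref{lem-modular-tp} guaranteeing that each side is a genuine modular extension. Where you diverge is in how the equivalences are shown to respect the extension data. You track the embeddings $\iota_{\bullet}$ directly, arguing that the associator, braiding and unitor of $\boxdot_{\E}$ are natural in braided $\E$-module functors (or invoking the uniqueness clause $\exists!\,\underline{F}$ in the universal property), so that each coherence equivalence intertwines the composite embeddings over the base equivalences of $\NBFCe$'s. The paper never mentions the $\iota$'s again after citing Rem.~\ref{rem-sun-prop-bE}: it instead applies $\E \boxdot_{\E} (-)$ to both sides and chains the identification $\E \boxdot_{\E} (\C \boxdot_{\E} \D) \simeq (\E \boxdot_{\E} \C) \boe (\E \boxdot_{\E} \D)$ to get, e.g., $\E \boxdot_{\E} ((\CL \boxdot_{\E} \M) \boxdot_{\E} \N) \simeq (\B \boe \C) \boe \D \simeq \B \boe (\C \boe \D) \simeq \E \boxdot_{\E} (\CL \boxdot_{\E} (\M \boxdot_{\E} \N))$; since $\E \boxdot_{\E} \M = \C$ for a modular extension, this checks that the equivalence restricts correctly on $\E$-centralizers, which is where the embedding data lives. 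Your route is the more scrupulous one: an equivalence of modular extensions in the sense of \cite{TL} requires the braided equivalence to commute with the embeddings, not merely to induce \emph{some} equivalence of centralizers, and your appeal to naturality is exactly what makes this precise, whereas the paper's terser computation leaves it implicit (relying on the final sentence of Rem.~\ref{rem-sun-prop-bE}). The cost of your route, as you correctly flag, is that one must verify that the coherence data of $\boxdot_{\E}$ imported from \cite{SL} is natural in braided $\E$-module functors; the paper's centralizer computation sidesteps stating this but buys nothing stronger.
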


\begin{proof}
By Rem.\,\ref{rem-sun-prop-bE}, the equivalences $(\CL \boxdot_{\E} \M) \boxdot_{\E} \N \simeq \CL \boxdot_{\E} (\M \boxdot_{\E} \N)$, $\M \boxdot_{\E} \N \simeq \N \boxdot_{\E} \M$ and  $Z(\E) \boxdot_{\E} \M \simeq \M$ hold as braided $\E$-modules.
Then we get $\E \boxdot_{\E} ((\CL \boxdot_{\E} \M) \boxdot_{\E} \N) \simeq (\B \boe \C) \boe \D \simeq \B \boe (\C \boe \D) \simeq \E \boxdot_{\E} (\CL \boxdot_{\E} (\M \boxdot_{\E} \N))$,
$\E \boxdot_{\E} (\M \boxdot_{\E} \N) \simeq \C \boe \D \simeq \D \boe \C \simeq \E \boxdot_{\E} (\N \boxdot_{\E} \M)$
and $\E \boxdot_{\E} (Z(\E) \boxdot_{\E} \M) \simeq \E \boe \C \simeq \C \simeq \E \boxdot_{\E} \M$.
\end{proof}

\begin{thm}[\cite{TL}\,Thm.\,4.20]
The set $\M_{ext}(\E)$, together with the multipilication $\boe^{(-,-)}$ and the identity element $(Z(\E), \iota_0)$, defines a finite group.
\end{thm}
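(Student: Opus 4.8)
The plan is to promote the commutative monoid furnished by Lem.\,\ref{lem-modular-tp} and the preceding Proposition to a finite abelian group. First I would check that $\boe^{(-,-)}$ really is an internal binary operation on $\M_{ext}(\E)$. Since $\E$ is symmetric, its double braiding $c_{e,e'}c_{e',e}$ is trivial, so $\E$ is itself an $\NBFCe$ with $\E'|_\E=\E$, and the relative Deligne product has $\E$ as its unit, giving $\E\boe\E\simeq\E$ as braided $\E$-modules; moreover $\boxdot_\E$ carries equivalent pairs to equivalent categories by the last line of Rem.\,\ref{rem-sun-prop-bE}, so the operation descends to equivalence classes. Hence Lem.\,\ref{lem-modular-tp} specializes to a well-defined map $\boe^{(-,-)}\colon\M_{ext}(\E)\times\M_{ext}(\E)\to\M_{ext}(\E\boe\E)=\M_{ext}(\E)$ sending $(\M,\iota_\M),(\N,\iota_\N)$ to $(\M\boxdot_\E\N,\iota_\M\boe\iota_\N)$, whose base is $\E\boxdot_\E(\M\boxdot_\E\N)\simeq(\E\boxdot_\E\M)\boe(\E\boxdot_\E\N)=\E\boe\E\simeq\E$. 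Specializing the preceding Proposition to $\B=\C=\D=\E$ then yields associativity, commutativity, and the two-sided unit $(Z(\E),\iota_0)$, so $(\M_{ext}(\E),\boe^{(-,-)},(Z(\E),\iota_0))$ is a commutative monoid.

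The heart of the argument is the construction of inverses, for which I would take the candidate $(\overline{\M},\iota_\M)$, the category $\M$ with reversed braiding. First I would verify that this lies in $\M_{ext}(\E)$: reversing the braiding preserves non-degeneracy, so $(\overline{\M})'=\vect$; since $\E$ is symmetric $\overline{\E}=\E$, so $\iota_\M$ remains a braided embedding; and the double-braiding condition defining the centralizer is invariant under global braiding reversal, whence $\E'|_{\overline{\M}}=\E'|_\M=\E$. The key lemma is then $\M\boxdot_\E\overline{\M}\simeq Z(\E)$ as braided fusion categories containing $\E$, with the inclusion corresponding to $\iota_0$; this is precisely the assertion that $(\M,\iota_\M)\boe^{(\iota_\M,\iota_\M)}(\overline{\M},\iota_\M)\simeq(Z(\E),\iota_0)$, i.e.\ that $\overline{\M}$ inverts $\M$.

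To prove this key lemma I would use Prop.\,\ref{prop-3.7.1} to write $\M\boxdot_\E\overline{\M}\simeq(\M\boxtimes\overline{\M})^0_{L_\E}$, together with the standard equivalence $\M\boxtimes\overline{\M}\simeq Z(\M)$ valid because $\M$ is modular. The center $Z(\M)$ carries its canonical Lagrangian algebra $\bigoplus_{i\in\Irr(\M)}i\boxtimes i^*$, and $L_\E=\bigoplus_{i\in\mO(\E)}i^L\boxtimes i$ is a connected \'etale subalgebra of it. Condensing in stages, $L_\E$ first, the canonical Lagrangian descends to a connected \'etale algebra in $(\M\boxtimes\overline{\M})^0_{L_\E}$ which is again Lagrangian and whose category of modules is $\E$; by the characterization of Drinfeld centers via Lagrangian algebras this identifies the condensation with $Z(\E)$, compatibly with the embedding of $\E$. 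The Frobenius--Perron bookkeeping is consistent: M\"uger's double-centralizer identity gives $\FPdim(\M)=\FPdim(\E)\,\FPdim(\E'|_\M)=\FPdim(\E)^2$, so $\FPdim\big((\M\boxtimes\overline{\M})^0_{L_\E}\big)=\FPdim(\M)^2/\FPdim(L_\E)^2=\FPdim(\E)^4/\FPdim(\E)^2=\FPdim(\E)^2=\FPdim(Z(\E))$, and the descended Lagrangian has dimension $\FPdim(\E)$. Carrying out the condensation-in-stages and the identification of its module category with $\E$ is the delicate point and the main obstacle; it is the content of \cite[Lem.\,4.16]{TL}.

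Finally, for finiteness I would observe that the dimension computation above fixes $\FPdim(\M)=\FPdim(\E)^2$ for every $(\M,\iota_\M)\in\M_{ext}(\E)$. Bounded Frobenius--Perron dimension bounds both the rank and all fusion coefficients $N_{ij}^k\le\FPdim(\M)$, so only finitely many fusion rings can occur; by Ocneanu rigidity each is categorified by finitely many fusion categories, each of which admits only finitely many braidings, so there are finitely many braided fusion categories of dimension $\FPdim(\E)^2$ up to equivalence. Hence $\M_{ext}(\E)$ is finite. A finite commutative monoid in which every element is invertible is a finite abelian group, completing the proof.
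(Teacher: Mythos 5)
Your proposal is correct, and its core coincides with the paper's proof: the commutative monoid structure is exactly the specialization of the preceding Proposition, and the inverse of $(\M,\iota_{\M})$ is $(\overline{\M},\iota_{\M})$, the crux being $\M\boxdot_{\E}\overline{\M}\simeq(Z(\E),\iota_0)$ --- which the paper, like you, ultimately takes from \cite[Rem.\,4.3, Lem.\,4.16]{TL}. Where you differ is in going beyond the paper's one-line proof: you sketch how \cite[Lem.\,4.16]{TL} is actually proved (via Prop.\,\ref{prop-3.7.1}, the equivalence $\M\boxtimes\overline{\M}\simeq Z(\M)$ for nondegenerate $\M$, condensation of the canonical Lagrangian in stages, with correct Frobenius--Perron bookkeeping $\FPdim(\M)=\FPdim(\E)\,\FPdim(\E'|_{\M})=\FPdim(\E)^2$), and you supply an explicit finiteness argument via Ocneanu rigidity, whereas the paper's proof never addresses finiteness at all and inherits it silently from the attribution to \cite[Thm.\,4.20]{TL}; you also verify the routine membership claims ($\overline{\E}=\E$, $(\overline{\M})'=\vect$, $\E'|_{\overline{\M}}=\E'|_{\M}=\E$) that the paper leaves implicit. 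Your finiteness argument buys self-containedness, but note one small supplement it needs: elements of $\M_{ext}(\E)$ are equivalence classes of \emph{pairs} $(\M,\iota_{\M})$, so bounding the number of braided fusion categories of dimension $\FPdim(\E)^2$ is not quite enough --- you must also observe that a fixed $\M$ admits only finitely many braided embeddings $\E\hookrightarrow\M$ up to isomorphism, which again follows from rigidity (finiteness of tensor functors between fixed fusion categories up to natural isomorphism), after which the conclusion stands.
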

\begin{proof}
By \cite[Rem.\,4.3, Lem.\,4.16]{TL}, $(\overline{\M}, \overline{\iota_{\M}} \coloneqq \iota_{\M}: \overline{\E} \hookrightarrow \overline{\M})$ is a modular extension of $\overline{\E}$ and
$\M \boe^{(\iota_{\M}, \overline{\iota_{\M}})} \overline{\M} \coloneqq \big( \M \boxdot_{\E} \overline{\M}, \E \hookrightarrow \M \boxdot_{\E} \overline{\M} \big) \simeq (Z(\E), \iota_0).$
\end{proof}

\section{Functoriality of the center}
\subsection{Modules over a multifusion category over $\E$}

\begin{defn}[\cite{DNO}\,Def.\,2.5,\,2.7]
\label{defn-oE}
\emph{A multifusion category over $\E$} is a multifusion category $\A$ equipped with a $\bk$-linear braided monoidal functor $T_{\A}: \E \to Z(\A)$.
A multifusion category over $\E$ is fully faithful if the functor $\E \to Z(\A)$ is fully faithful.
A fusion category over $\E$ is fully faithful if the central functor $\E \to Z(\A) \xrightarrow{\forget} \A$ is fully faithful, where $\forget$ is the forgetful functor.

A \emph{monoidal functor over $\E$} between two multifusion categories $\A$, $\B$ over $\E$ is a $\bk$-linear monoidal functor $F: \A \rightarrow \B$ equipped with a monoidal natural isomorphism $u_e: F(T_{\A}(e)) \rightarrow T_{\B}(e)$ in $\B$ for each $e \in \E$, called the structure of monoidal functor over $\E$ on $F$, such that the diagram 
\begin{equation}
\begin{split}
\xymatrix{
F(T_{\A}(e) \otimes x) \ar[r]^{J_{T_{\A}(e), x}} \ar[d]_{F(z_{e, x})} & F(T_{\A}(e)) \otimes F(x) \ar[r]^{u_e, 1} & T_{\B}(e) \otimes F(x) \ar[d]^{\hat{z}_{e, F(x)}}\\
F(x \otimes T_{\A}(e)) \ar[r]^{J_{x, T_{\A}(e)}} & F(x) \otimes F(T_{\A}(e)) \ar[r]^{1, u_e} & F(x) \otimes T_{\B}(e)
}
\end{split}
\end{equation}
commutes for $e \in \E, x \in \A$, $(T_{\A}(e), z_{e,-}) \in Z(\A)$ and $(T_{\B}(e), \hat{z}_{e,-}) \in Z(\B)$, where $J$ is the monoidal structure of $F$.
\end{defn}

\begin{expl}
\label{ex3}
Let $\C$ and $\D$ be multifusion categories over $\E$.
$\C \boe \D$ is a multifusion category over $\E$. 
We define a monoidal functor $T_{\C \boe \D}: \E \simeq \E \boe \E \xrightarrow{T_{\C} \boe T_{\D}} \C \boe \D$ by $e \mapsto e \boe \unit_{\E} \mapsto T_{\C}(e) \boe T_{\D}(\unit_{\E}) = T_{\C}(e) \boe \unit_{\D}$ for $e \in \E$.
 And the central structure $\sigma$ on $T_{\C \boe \D}$ is induced by
 \[ \sigma_{e, c \boe d}: T_{\C \boe \D}(e) \otimes (c \boe d) = (T_{\C}(e) \otimes c) \boe (\unit_{\D} \otimes d) \xrightarrow{z_{e, c} \boe \hat{z}_{\unit_{\D}, d}} (c \otimes T_{\C}(e)) \boe (d \otimes \unit_{\D}) = (c \boe d) \otimes T_{\C \boe \D}(e) \]
for $e \in \E$, $c \boe d \in \C \boe \D$, $(T_{\C}(e), z_{e, -}) \in Z(\C)$ and $(\unit_{\D}, \hat{z}_{\unit_{\D}, -}) \in Z(\D)$.
Notice that $T_{\C \boe \D}(e) \simeq \unit_{\C} \boe T_{\D}(e)$. 
If $\C$ and $\D$ are fully faithful multifusion category over $\E$, $\C \boe \D$ is a fully faithful multifusion category over $\E$ by \cite[Cor.\,4.1.5]{SL}. 
If $\C$ and $\D$ are fully faithful fusion category over $\E$, $\C \boe \D$ is a fully faithful fusion category over $\E$.
\end{expl}

The 2-category $\cat$ consists of finite semisimple left $\E$-modules, left $\E$-module functors, and left $\E$-module natural transformations.
The 2-category $\cat$ with the relative tensor product $\boe$ and the unit $\E$ is a symmetric monoidal 2-category.
Let $\C, \D$ be multifusion categories. The 2-category $\BMod_{\C|\D}(\fcat)$ consists of finite semisimple $\C$-$\D$ bimodules, $\C$-$\D$ bimodule functors and $\C$-$\D$ bimodule natural transformations.

\begin{defn}[\cite{Wei}\,Def.\,4.14]
\label{defn-CD-bimodule}
 Let $\C$ and $\D$ be multifusion categories over $\E$.
The 2-category $\BMod_{\C|\D}(\cat)$ consists of the following data.
\begin{itemize}
\item An object $\M \in \BMod_{\C|\D}(\cat)$ is an object $\M$ both in $\cat$ and $\BMod_{\C|\D}(\fcat)$ equipped with monoidal natural isomorphisms $u^{\C}_e: T_{\C}(e) \odot - \simeq e \odot -$ and $u^{\D}_e: - \odot T_{\D}(e) \simeq e \odot -$ in $\Fun_{\E}(\M, \M)$ for each $e \in \E$ such that 
the functor $(c \odot - \odot d, \tilde{s}^{c \odot - \odot d})$ belongs to $\Fun_{\E}(\M, \M)$ for each $c \in \C$, $d \in \D$,
and the diagrams
\begin{equation}
\label{dig-bimodule1}
\begin{split}
 \xymatrix{
(T_{\C}(e) \otimes c) \odot - \odot d  \ar[r] \ar[d]_{z_{e,c}, 1, 1} & T_{\C}(e) \odot (c \odot - \odot d) \ar[r]^(0.55){(u^{\C}_e)_{c \odot - \odot d}} & e \odot (c \odot - \odot d) \ar[d]^{(\tilde{s}^{c \odot - \odot d}_{e,-})^{-1}} \\
(c \otimes T_{\C}(e)) \odot - \odot d \ar[r] & c \odot (T_{\C}(e) \odot -) \odot d \ar[r]^(0.55){1, u^{\C}_e, 1} & c \odot (e \odot -) \odot d
} 
\end{split}
\end{equation}
\begin{equation}
\label{dig-bimodule2} 
\begin{split}
\xymatrix{
c \odot - \odot (d \otimes T_{\D}(e)) \ar[r] \ar[d]_{1,1, \hat{z}^{-1}_{e, d}} & (c \odot - \odot d) \odot T_{\D}(e) \ar[r]^(0.55){(u^{\D}_e)_{c \odot - \odot d}} & e \odot (c \odot - \odot d) \ar[d]^{(\tilde{s}^{c \odot - \odot d}_{e,-})^{-1}} \\
c \odot - \odot (T_{\D}(e) \otimes d) \ar[r] & c \odot (- \odot T_{\D}(e)) \odot d \ar[r]^(0.55){1, u^{\D}_e, 1} & c \odot (e \odot -) \odot d
} 
\end{split}
\end{equation}
commute for all $e \in \E, c \in \C, d \in \D$, $(T_{\C}(e), z_{e,-}) \in Z(\C)$ and $(T_{\D}(e), \hat{z}_{e, -}) \in Z(\D)$.
 We use a triple $(\M, u^{\C}, u^{\D})$ to denote an object $\M$ in $\BMod_{\C|\D}(\cat)$. 

\item For objects $(\M, u^{\C}, u^{\D})$, $(\N, \bar{u}^{\C}, \bar{u}^{\D})$ in $\BMod_{\C|\D}(\cat)$, 
a 1-morphism $F: \M \rightarrow \N$ in $\BMod_{\C|\D}(\cat)$ is
a 1-morphism $F: \M \rightarrow \N$ both in $\cat$ and $\BMod_{\C|\D}(\fcat)$ such that the diagrams
\begin{equation}
\label{d2}
\begin{split}
 \vcenter{\xymatrix{
F(T_{\C}(e) \odot m) \ar[r]^(0.55){(u^{\C}_e)_m} \ar[d]_{s^F_{T_{\C}(e), m}} &  F(e \odot m)\ar[d]^{\tilde{s}^F_{e, m}} \\
T_{\C}(e) \odot F(m) \ar[r]_(0.55){(\bar{u}^{\C}_e)_{F(m)}} & e \odot F(m)
}} 
\qquad
\vcenter{\xymatrix{
F(m \odot T_{\D}(e)) \ar[r]^(0.55){(u^{\D}_e)_m} \ar[d]_{t^F_{m, T_{\D}(e)}} & F(e \odot m) \ar[d]^{\tilde{s}^F_{e, m}}  \\
F(m) \odot T_{\D}(e) \ar[r]_(0.55){(\bar{u}^{\D}_e)_{F(m)}} & e \odot F(m)
}}
\end{split}
\end{equation}
commute. Here $s^F$ (or $t^F$, $\tilde{s}^F$) is the left $\C$-module (or right $\D$-module, left $\E$-module) structure on $F$.
We use the quadruple $(F, s^F, t^F, \tilde{s}^F)$ to denote a morphism in $\BMod_{\C|\D}(\cat)$.
\item For 1-morphisms $F, G: \M \rightrightarrows \N$ in $\BMod_{\C|\D}(\cat)$, 
a 2-morphism from $F$ to $G$ is a $\C$-$\D$ bimodule natural transformation from $F$ to $G$. 
\end{itemize}
\end{defn}

 If $\D = \E$, we denote $\LMod_{\C}(\cat) \coloneqq \BMod_{\C|\E}(\cat)$. If $\C = \E$, we denote $\RMod_{\D}(\cat) \coloneqq \BMod_{\E|\D}(\cat)$.
For objects $\M, \N$ in $\BMod_{\C|\D}(\cat)$, we use $\Fun_{\C \boe \D^{\rev}}(\M, \N)$ to denote the category of 1-morphisms $\M \to \N$, 2-morphisms in $\BMod_{\C|\D}(\cat)$. We use $\Fun_{\C|\D}(\M, \N)$ to denote the category of $\C$-$\D$ bimodule functors $\M \to \N$, and $\C$-$\D$ bimodule natural transformations. 

Let $v^{\M}_e \coloneqq (u^{\D}_e)^{-1} \circ u^{\C}_e$ and $v^{\N}_e := (\bar{u}^{\D}_e)^{-1} \circ \bar{u}^{\C}_e$ for $e \in \E$, $- \in \M$. 
By (\ref{d2}), a 1-morphism $(F, s^F, t^F): \M \rightarrow \N$ in $\BMod_{\C|\D}(\cat)$ satisfies the following diagram for $e \in \E, m \in \M$:
\begin{equation}
\label{CD-bim-fun}
\begin{split}
 \xymatrix{
F(T_{\C}(e) \odot m) \ar[r]^{(v^{\M}_e)_m} \ar[d]_{s^F_{T_{\C}(e), m}} & F(m \odot T_{\D}(e)) \ar[d]^{t^F_{m, T_{\D}(e)}} \\
T_{\C}(e) \odot F(m) \ar[r]_{(v^{\N}_e)_{F(m)}} & F(m) \odot T_{\D}(e)
} 
\end{split}
\end{equation}

\begin{expl}
The category $\Fun_{\C|\D}(\M, \N)$ is a braided $\E$-module.
The left $\E$-module structure on $\Fun_{\C|\D}(\M, \N)$ is induced by $e \odot f \coloneqq T_{\C}(e) \odot f$, for $e \in \E$, $(f, s^f, t^f) \in \Fun_{\C | \D}(\M, \N)$. 
The $\E$-module braiding is given as follows:
\[ \tau^1_{e,f}: T_{\C}(e) \odot f(-) \xrightarrow{(v^{\N}_e)_{f(-)}} f(-) \odot T_{\D}(e) \]
\[ \tau^2_{f,e}: f(-) \odot T_{\D}(e) \xrightarrow{(t^f_{-, T_{\D}(e)})^{-1}} f(- \odot T_{\D}(e)) \xrightarrow{(v^{\M}_e)^{-1}} f(T_{\C}(e) \odot -) \xrightarrow{s^f_{T_{\C}(e), -}} T_{\C}(e) \odot f(-) \]
If $f \in \Fun_{\C \boe \D^{\rev}}(\M, \N)$, $\tau_{e,f} = \tau^2_{f,e} \circ \tau^1_{e, f} = \id_{e \odot f}$. 
There is an equivalence $\E \boxdot_{\E} \Fun_{\C|\D}(\M, \N) \simeq \Fun_{\C \boe \D^{\rev}}(\M, \N) $ of categories by \cite[Rem.\,3.6.17]{SL}.
\end{expl}

\begin{expl}
\label{expl-fun-3.9}
$\Fun_{\C \boe \D^{\rev}}(\M, \M)$ is a multifusion category by \cite[Cor.\,9.3.3]{EGNO}.
$\Fun_{\C \boe \D^{\rev}}(\M, \M)$ is a multifusion category over $\E$.
A functor $\hat{T}: \E \to \Fun_{\C \boe \D^{\rev}}(\M, \M)$ is defined as $e \mapsto \hat{T}^e \coloneqq T_{\C}(e) \odot -$. 
The left $\C$-module structure on $\hat{T}^e$ is defined as 
\[ (s^{\hat{T}^e})_{c, m}: T_{\C}(e) \odot (c \odot m) \simeq (T_{\C}(e) \otimes c) \odot m \xrightarrow{z_{e, c}, 1} (c \otimes T_{\C}(e)) \odot m \simeq c \odot (T_{\C}(e) \odot m)   \]
for $c \in \C, m \in \M$, and $(T_{\C}(e), z_{e,-}) \in Z(\C)$.
The right $\D$-module structure on $\hat{T}^e$ is defined as
\[ t^{\hat{T}^e}_{m, d}: T_{\C}(e) \odot (m \odot d) \simeq (T_{\C}(e) \odot m) \odot d \]
for $m \in \M$, $d \in \D$.
Check that the functor $\hat{T}^e = T_{\C}(e) \odot -$ belongs to $\Fun_{\C \boe \D^{\rev}}(\M, \M)$ for any $\tilde{e} \in \E$:
\[ \xymatrix{
T_{\C}(e) \odot (T_{\C}(\tilde{e}) \odot m) \ar[r]^{z_{e, T_{\C}(\tilde{e})}} \ar[d]_{1, (v^{\M}_{\tilde{e}})_m} & T_{\C}(\tilde{e}) \odot (T_{\C}(e) \odot m) \ar[d]^{(v^{\M}_{\tilde{e}})_{T_{\C}(e) \odot m}} \\
T_{\C}(e) \odot (m \odot T_{\D}(\tilde{e})) \ar[r] & (T_{\C}(e) \odot m) \odot T_{\D}(\tilde{e})
} \]
The above diagram commutes by the last diagram of \cite[Rem.\,4.15]{Wei} and the equation $z_{\tilde{e}, T_{\C}(e)} \circ z_{e, T_{\C}(\tilde{e})} = \id$.

The monoidal structure on $\hat{T}$ is induced by $T_{\C}(e \otimes \tilde{e}) \odot - \simeq (T_{\C}(e) \otimes T_{\C}(\tilde{e})) \odot - \simeq T_{\C}(e) \odot (T_{\C}(\tilde{e}) \odot -)$ for $e, \tilde{e} \in \E$.
The central structure of $\hat{T}$ is a natural isomorphism 
\[ \sigma_{e, g}: \hat{T}^e \circ g (m) = T_{\C}(e) \odot g(m) \xrightarrow{(s^g_{T_{\C}(e), m})^{-1}} g(T_{\C}(e) \odot m) = g \circ \hat{T}^e(m) \]
 for any $e \in \E, (g, s^g, t^g) \in \Fun_{\C \boe \D^{\rev}}(\M, \M), m \in \M$.
\end{expl}

\begin{defn}[\cite{Wei}\,Def.4.31]
\label{defn-invertible}
Let $\C, \D$ be multifusion categories over $\E$.
An $\M \in \BMod_{\C|\D}(\cat)$ is \emph{invertible} if there is an equivalence $\D^{\rev} \simeq \Fun_{\C}(\M, \M)$ of multifusion categories over $\E$.
If such an invertible $\M$ exists, $\C$ and $\D$ are said to be \emph{Morita equivalent in $\cat$}.
\end{defn}

\begin{thm}[\cite{Wei}\,Thm.\,4.40]
\label{Thm-Morita-eq}
Let $\C$ and $\D$ be fully faithful fusion categories over $\E$. $\C$ and $\D$ are Morita equivalent in $\cat$ if and only if $Z(\C, \E)$ and $Z(\D, \E)$ are equivalent as braided fusion categories over $\E$.
\end{thm}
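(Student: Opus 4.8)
The plan is to treat this as the $\E$-relative analogue of the classical theorem that two fusion categories are Morita equivalent if and only if their Drinfeld centers are braided equivalent, and to port each half of that argument into the setting of braided $\E$-modules and the relative tensor product $\boxdot_{\E}$. Throughout I would use the identification $Z(\C,\E) = \E'|_{Z(\C)}$, i.e. the relative center is the M\"{u}ger centralizer of the image of $\E$ inside the (modular) Drinfeld center (Expl.\,\ref{expl-bE-mo}), so that $Z(\C)$ is a modular extension of the $\NBFCe$ $Z(\C,\E)$, and likewise for $\D$.

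For the forward direction, suppose $\M$ is an invertible $\C$-$\D$ bimodule in $\cat$, so that by Def.\,\ref{defn-invertible} we have an equivalence $\D^{\rev} \simeq \Fun_{\C}(\M, \M)$ of fusion categories over $\E$. First I would invoke the Morita invariance of the Drinfeld center: a faithful module category induces a canonical braided equivalence $Z(\C) \simeq Z(\Fun_{\C}(\M, \M))$. The content I must add is that this equivalence intertwines the central functor $T_{\C}\colon \E \to Z(\C)$ with the corresponding central functor into $Z(\D^{\rev})$; this is exactly what the monoidal isomorphisms $u^{\C}, u^{\D}$ and $v^{\M}$ on $\M$ (Def.\,\ref{defn-CD-bimodule}) are designed to encode. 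Granting the $\E$-compatibility, restricting to the M\"{u}ger centralizers $\E'|_{Z(\C)}$ and $\E'|_{Z(\D)}$ produces a braided equivalence $Z(\C,\E) \simeq Z(\D,\E)$ that is manifestly over $\E$.

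For the converse I would argue in the style of the Lagrangian-algebra reconstruction. Starting from a braided-over-$\E$ equivalence $\phi\colon Z(\C,\E) \simeq Z(\D,\E)$, form the fully faithful multifusion category $\C \boe \D^{\rev}$ over $\E$ (Expl.\,\ref{ex3}) and identify its relative center with $Z(\C,\E) \boxdot_{\E} \overline{Z(\D,\E)}$, combining the standard $Z(\C \boxtimes \D^{\rev}) \cong Z(\C) \boxtimes \overline{Z(\D)}$ with Prop.\,\ref{prop-3.7.1} to pass between the $\boxtimes$-center and the relative tensor product. The graph of $\phi$ then singles out a connected \'{e}tale algebra $A_{\phi}$ in $Z(\C,\E) \boxdot_{\E} \overline{Z(\D,\E)}$ which is Lagrangian relative to $\E$; its category of local modules is an indecomposable $\C \boe \D^{\rev}$-module in $\cat$, equivalently a bimodule $\M_{\phi} \in \BMod_{\C|\D}(\cat)$. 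Finally I would verify invertibility, i.e. $\Fun_{\C}(\M_{\phi}, \M_{\phi}) \simeq \D^{\rev}$ over $\E$, using the nondegeneracy of $A_{\phi}$ together with the multiplicativity of modular extensions (Lem.\,\ref{lem-modular-tp}) and the structural equivalences for $\boxdot_{\E}$ recorded in Rem.\,\ref{rem-sun-prop-bE}.

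The hard part will be the converse, and within it the verification that the algebra $A_{\phi}$ extracted from the relative-center equivalence is genuinely connected \'{e}tale and Lagrangian \emph{relative to $\E$}, and that its module category carries mutually compatible $\C$-, $\D$- and $\E$-actions making it an object of $\BMod_{\C|\D}(\cat)$ rather than merely of $\BMod_{\C|\D}(\fcat)$. I expect the $\E$-module bookkeeping---tracking the $\E$-module braidings $\tau^{\C}, \tau^{\D}$ and the isomorphisms $u^{\C}, u^{\D}, v^{\M}$ through the \'{e}tale-algebra and internal-hom constructions---to be where the genuine work lies, whereas the global shape of the argument is dictated by the non-relative case. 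A subtlety worth flagging is that $Z(\C)$ and $Z(\D)$ may be \emph{different} modular extensions of the common relative center, so the equivalence one obtains is exactly at the level of $Z(-,\E)$ and not of the full centers.
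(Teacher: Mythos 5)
A preliminary calibration: this paper contains no proof of Thm.\,\ref{Thm-Morita-eq} --- it is imported verbatim from \cite[Thm.\,4.40]{Wei} --- so your attempt can only be compared against that source, whose argument (as the machinery rebuilt here in Section 3 indicates: Thm.\,\ref{thm-Fun-braided-bim}, Prop.\,\ref{prop-closed-bimodule}, Thm.\,\ref{thm-func-E}) follows the Kong--Zheng route through the relative center functor and its (fully faithful) behaviour on closed bimodules, not through Lagrangian algebras. Your forward direction is sound and is in effect already written into the present paper: Expl.\,\ref{expl-fully-faith} gives $Z(\Fun_{\C}(\M,\M),\E)\simeq Z(\C^{\rev},\E)$ \emph{over} $\E$, so from $\D^{\rev}\simeq\Fun_{\C}(\M,\M)$ over $\E$ (Def.\,\ref{defn-invertible}) one gets $\overline{Z(\C,\E)}\simeq\overline{Z(\D,\E)}$ and hence the desired equivalence; the ``intertwining of central functors'' you rightly flag is exactly what the data $u^{\C},u^{\D},v^{\M}$ of Def.\,\ref{defn-CD-bimodule} and \cite[Thm.\,4.34]{Wei} take care of. Your closing observation that $Z(\C)$ and $Z(\D)$ may be distinct modular extensions of the common relative center is also correct and important.

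The converse, however, has a concrete misstep and a genuine gap. First, the misstep: if the graph algebra $A_{\phi}$ is Lagrangian \emph{relative to} $\E$, then by the very meaning of that condition its category of \emph{local} modules in the relative center recovers $\E$ itself (compare Prop.\,\ref{prop-3.7.1} and Expl.\,\ref{expl-bE-mo}); so ``its category of local modules is the bimodule $\M_{\phi}$'' returns the wrong object, and your construction collapses as written. What you need, exactly as in the non-relative Etingof--Nikshych--Ostrik argument, is the category of \emph{all} right modules in $\C\boe\D^{\rev}$ over the image of $A_{\phi}$ under the forgetful functor. Second, the gap: the relative Lagrangian correspondence you invoke --- that the graph of $\phi$ is connected \'{e}tale and relative-Lagrangian, that its module category is an indecomposable object of $\BMod_{\C|\D}(\cat)$ (i.e.\ carries the compatible $\E$-action and the isomorphisms $u^{\C},u^{\D}$, not merely a bimodule structure in $\fcat$), and that relative nondegeneracy of $A_{\phi}$ yields $\Fun_{\C}(\M_{\phi},\M_{\phi})\simeq\D^{\rev}$ over $\E$ --- appears nowhere in this paper and is essentially the whole content of the theorem; Lem.\,\ref{lem-modular-tp} and Rem.\,\ref{rem-sun-prop-bE} concern modular extensions and associativity of $\boxdot_{\E}$ and will not supply the invertibility step, for which one needs either a relative analogue of the Lagrangian/module-category dictionary (in the style of \cite{DMNO, DNO}) or the closed-bimodule full-faithfulness route of \cite{Liang, Wei} that Prop.\,\ref{prop-closed-bimodule} here mirrors. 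A benign point worth noting: you write $Z(\C,\E)\boxdot_{\E}\overline{Z(\D,\E)}$ where Cor.\,\ref{cor-Fun-braided-bim} produces $Z(\C,\E)\boe\overline{Z(\D,\E)}$; the two happen to coincide here because $\E$ is M\"{u}ger-central in both factors, so their $\E$-module braidings are trivial, but the distinction between $\boxdot_{\E}$ and $\boe$ is exactly where such an argument can silently go wrong elsewhere.
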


\begin{expl}
\label{expl-fully-faith}
Suppose that $\M$ is a faithful object in $\LMod_{\C}(\cat)$ (see \cite[Def.\,4.11]{Wei}). 
There is an equivalence
\[ \Fun_{\Fun_{\C}(\M, \M)}(\M, \M) \simeq \C \]
of multifusion categories over $\E$ by \cite[Prop.4.13]{Wei}.
Then $\Fun_{\C}(\M, \M)$ is Morita equivalent to $\C^{\rev}$ in $\cat$. 
If $\E \to Z(\C)$ is fully faithful, $\E = \overline{\E} \xrightarrow{T_{\C}} \overline{Z(\C)} \cong Z(\C^{\rev})$ is fully faithful. 
We have $Z(\Fun_{\C}(\M, \M)) \simeq Z(\C^{\rev})$ by \cite[Prop.\,2.4.4]{Liang} and $\E \to Z(\Fun_{\C}(\M, \M))$ is fully faithful.
Then $Z(\Fun_{\C}(\M, \M), \E) \simeq Z(\C^{\rev}, \E)$ by \cite[Thm.\,4.34]{Wei}.
\end{expl}

\subsection{Modules over a braided fusion category over $\E$}
\label{section-3.2}

The \emph{M\"{u}ger center} of $\C$, denoted by $\C'$, is the centralizer of $\C$ in $\C$. 
Let $\M$ be a fully faithful multifusion category over $\E$. The centralizer of $\E$ in $Z(\M)$ is denoted by $Z(\M, \E)$. By Expl.\,\ref{expl-bE-mo}, $\E \boxdot_{\E} Z(\M) = \E'|_{Z(\M)} = Z(\M, \E)$.

\begin{defn}[\cite{DNO}\,Def.2.9,\,4.1]
\label{defn-braided-over-E}
 A \emph{braided fusion category over $\E$} ($\BFCe$) is a braided fusion category $\A$ equipped with a $\bk$-linear braided monoidal embedding $T_{\A}: \E \rightarrow \A'$.
A $\BFCe$ is non-degenerate if $\E = \A'$. A non-degenerate $\BFCe$ is abbreviate to a $\NBFCe$.

A \emph{braided monoidal functor over $\E$} between two $\BFCe$'s $\A$ and $\B$ is a $\bk$-linear braided monoidal functor  $F: \A \rightarrow \B$ equipped with a monoidal natural isomorphism $u_e: F(T_{\A}(e)) \simeq T_{\B}(e)$ in $\B$ for all $e \in \E$.
\end{defn}

\begin{expl}
\label{ex-center-BFCe}
$Z(\M, \E)$ is a $\NBFCe$. The braided monoidal embedding $T_{Z(\M, \E)}: \E \to Z(\M, \E)'$ is given by $e \mapsto T_{\M}(e)$.
\end{expl}

\begin{defn}[\cite{Wei}\,Def.4.41]
\label{defn-cd-braided-bim}
Let $\C, \D$ be braided fusion categories over $\E$. 
The 2-category $\BMod_{\C|\D}(\Alg(\cat))$ consists of the following data.
\begin{itemize}
\item An object $\M \in \BMod_{\C|\D}(\Alg(\cat))$ is a fully faithful multifusion category $\M$ over $\E$ equipped with a $\bk$-linear braided monoidal functor $\phi_{\M}: \overline{\C} \boe \D \to Z(\M, \E)$ over $\E$.  
An object $\M \in \BMod_{\C|\D}(\Alg(\cat))$ is closed if $\phi_{\M}$ is an equivalence.
\item For objects $\M, \N$ in $\BMod_{\C|\D}(\Alg(\cat))$, a 1-morphism $F: \M \rightarrow \N$ in $\BMod_{\C|\D}(\Alg(\cat))$ is a $\bk$-linear monoidal functor $F: \M \rightarrow \N$ equipped with a monoidal natural isomorphism $u^{\M\N}: F \circ \phi_{\M} \Rightarrow \phi_{\N}$ such that the diagram
\begin{equation}
\label{braided-functor-E}
\begin{split}
 \xymatrix{
F(\phi_{\M}(x) \otimes m) \ar[r]^(0.45){J^F_{\phi_{\M}(x), m}} \ar[d]_{\beta^{\M}_{x,m}} & F(\phi_{\M}(x)) \otimes F(m) \ar[r]^{u^{\M\N}_x, 1} & \phi_{\N}(x) \otimes F(m) \ar[d]^{\beta^{\N}_{x,F(m)}} \\
F(m \otimes \phi_{\M}(x)) \ar[r]_(0.45){J^F_{m, \phi_{\M}(x)}} & F(m) \otimes F(\phi_{\M}(x)) \ar[r]_{1, u^{\M\N}_x} & F(m) \otimes \phi_{\N}(x)
} 
\end{split}
\end{equation}
commutes for $x \in \overline{\C} \boe \D, m \in \M$, where $(\phi_{\M}(x), \beta^{\M}_{x,-})  \in Z(\M, \E)$ and $(\phi_{\N}(x), \beta^{\N}_{x, -}) \in Z(\N, \E)$ and $J^F$ is the monoidal structure of $F$.

\item For 1-morphisms $F, G: \M \rightrightarrows \N$ in $\BMod_{\C|\D}(\Alg(\cat))$, a 2-morphism $\alpha: F \Rightarrow G$ in $\BMod_{\C|\D}(\Alg(\cat))$ is a monoidal isomorphism $\alpha$ such that the diagram 
\[ 
\xymatrixcolsep{0.5pc}
\xymatrixrowsep{1.2pc}
\xymatrix{
F(\phi_{\M}(x)) \ar[rr]^{\alpha_{\phi_{\M}(x)}} \ar[rd]_{u^{\M\N}_x} && G(\phi_{\M}(x)) \ar[ld]^{\tilde{u}^{\M\N}_x} \\
&\phi_{\N}(x) &
} \]
commutes for $x \in \overline{\C} \boe \D$, where $u^{\M\N}$ and $\tilde{u}^{\M\N}$ are the monoidal isomorphisms on $F$ and $G$ respectively.
\end{itemize}
\end{defn}

\begin{prop}
For $(\M, \phi_{\M})$, $(\N, \phi_{\N}) \in \BMod_{\C|\D}(\Alg(\cat))$, $F: \M \to \N$ is a 1-morphism in $\BMod_{\C|\D}(\Alg(\cat))$. 
Then $F$ is a monoidal functor over $\E$ and if $\phi_{\M}(x) \in Z(\M, \E)$ for $x \in \overline{\C} \boe \D$, $\phi_{\N}(x) \in Z(\N, \E)$.
\end{prop}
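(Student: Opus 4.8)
The plan is to manufacture the over-$\E$ structure on $F$ out of the data already carried by $\phi_\M$, $\phi_\N$ and the monoidal natural isomorphism $u^{\M\N}$, and then to deduce both claims from the single coherence diagram (\ref{braided-functor-E}). Write $T_{\overline{\C}\boe\D}$ for the central functor of the $\BFCe$ $\overline{\C}\boe\D$ and abbreviate $y_e \coloneqq T_{\overline{\C}\boe\D}(e)$ for $e \in \E$. Since $\phi_\M$ and $\phi_\N$ are braided monoidal functors over $\E$ (Def.\,\ref{defn-braided-over-E}), each is equipped with a monoidal natural isomorphism $w^\M_e \colon \phi_\M(y_e) \simeq T_{Z(\M,\E)}(e)$ and $w^\N_e \colon \phi_\N(y_e) \simeq T_{Z(\N,\E)}(e)$; by Expl.\,\ref{ex-center-BFCe} the targets are exactly $T_\M(e)$ and $T_\N(e)$. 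First I would define the candidate over-$\E$ structure $u_e \colon F(T_\M(e)) \to T_\N(e)$ as the composite
\[ F(T_\M(e)) \xrightarrow{F((w^\M_e)^{-1})} F(\phi_\M(y_e)) \xrightarrow{u^{\M\N}_{y_e}} \phi_\N(y_e) \xrightarrow{w^\N_e} T_\N(e). \]
As a composite of monoidal natural isomorphisms it is again a monoidal natural isomorphism, so the only thing left for the first claim is the coherence diagram of Def.\,\ref{defn-oE}.

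The key observation is that $w^\M_e$ and $w^\N_e$ are isomorphisms in the centers $Z(\M)$ and $Z(\N)$ (their source and target both lie in the full subcategories $Z(\M,\E)$, $Z(\N,\E)$), hence intertwine half-braidings: the half-braiding $z_{e,-}$ of $T_\M(e)$ is transported by $w^\M_e$ to $\beta^\M_{y_e,-}$, and likewise $\hat{z}_{e,-}$ to $\beta^\N_{y_e,-}$. Substituting these identifications, together with the definition of $u_e$ above, into the coherence diagram of Def.\,\ref{defn-oE} reduces it to diagram (\ref{braided-functor-E}) specialized at $x = y_e$, the remaining regions commuting by the naturality of the monoidal constraint $J^F$ and by functoriality of $\otimes$. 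I expect this pasting to be the main obstacle: one must track carefully how the half-braiding of the central object $T_\M(e)$ is rewritten through $w^\M_e$ as the half-braiding $\beta^\M_{y_e,-}$ supplied by $\phi_\M$, and symmetrically on the $\N$ side, so that the two outer columns of the square in Def.\,\ref{defn-oE} match the two outer columns of (\ref{braided-functor-E}) after conjugation by $w^\M_e$ and $w^\N_e$. Once this bookkeeping is in place, the verification is a diagram chase with no further input.

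For the second claim, I would read diagram (\ref{braided-functor-E}) as transporting the half-braiding $\beta^\M_{x,-}$ of $\phi_\M(x) \in Z(\M,\E)$ along $F$: it exhibits the half-braiding induced on $F(\phi_\M(x))$ by $F(\beta^\M_{x,-})$ and the constraint $J^F$ as being identified, through $u^{\M\N}_x$, with $\beta^\N_{x,-}$. Thus $u^{\M\N}_x \colon F(\phi_\M(x)) \simeq \phi_\N(x)$ is an isomorphism of objects of $Z(\N)$. It remains to see that this object lands in the $\E$-centralizer $Z(\N,\E) = \E'|_{Z(\N)}$ (Expl.\,\ref{expl-bE-mo}). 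Since $\phi_\M(x) \in \E'|_{Z(\M)}$, the double braiding of $\phi_\M(x)$ with every $T_\M(e)$ is trivial; transporting this along the half-braiding–preserving isomorphisms $u_e$ (from the first part) and $u^{\M\N}_x$ shows the double braiding of $\phi_\N(x)$ with every $T_\N(e)$ is trivial, so $\phi_\N(x) \in \E'|_{Z(\N)} = Z(\N,\E)$, as claimed.
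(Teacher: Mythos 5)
Your proposal is correct and follows essentially the same route as the paper: your $u_e$ is exactly the paper's composite $v\colon F \circ T_{\M} \xRightarrow{1,(u^{\M})^{-1}} F \circ \phi_{\M} \circ T_{\D} \xRightarrow{u^{\M\N},1} \phi_{\N}\circ T_{\D} \xRightarrow{u^{\N}} T_{\N}$ (the paper merely reduces first to the case $\overline{\C} = \E$ rather than working with $T_{\overline{\C}\boe\D}$ directly), and your coherence check is the same pasting of diagram (\ref{braided-functor-E}) at $x = T_{\D}(e)$ with the naturality of $J^F$ and the fact that $u^{\M}_e$ and $u^{\N}_e$, being isomorphisms in $Z(\M,\E)$ and $Z(\N,\E)$, intertwine the half-braidings $\sigma_{e,-}$, $\beta^{\M}_{T_{\D}(e),-}$ and $\hat{\sigma}_{e,-}$, $\beta^{\N}_{T_{\D}(e),-}$. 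Your second step likewise coincides with the paper's second diagram, which transports the triviality of the double braiding $\sigma_{e,\phi_{\M}(x)} \circ \beta^{\M}_{x,T_{\M}(e)}$ along the isomorphisms $v_e$ and $u^{\M\N}_x$ to conclude $\hat{\sigma}_{e,\phi_{\N}(x)} \circ \beta^{\N}_{x,T_{\N}(e)} = \id$.
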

\begin{proof}
It is sufficient to prove the statement in the case $\overline{\C} = \E$.
Recall the central structure on the functor $T_{Z(\M, \E)} = T_{\M}: \E \to Z(\M, \E)'$ by Expl.\,\ref{ex-center-BFCe}.
Let $u^{\M}: \phi_{\M} \circ T_{\D} \Rightarrow T_{\M}$ and $u^{\N}: \phi_{\N} \circ T_{\D} \Rightarrow T_{\N}$ be the structures of monoidal functors over $\E$ on $\phi_{\M}$ and $\phi_{\N}$ respectively.
The structure of monoidal functor over $\E$ on $F$ is induced by the composition $v: F \circ T_{\M} \xRightarrow{1, (u^{\M})^{-1}} F \circ \phi_{\M} \circ T_{\D} \xRightarrow{u^{\M\N}, 1} \phi_{\N} \circ T_{\D} \xRightarrow{u^{\N}} T_{\N}$.
Next check that $F: \M \to \N$ is a monoidal functor over $\E$.
\[ \xymatrix@=5ex{
F(T_{\M}(e) \otimes m) \ar[r]^{J^F} \ar[d]^{(u^{\M}_e)^{-1},1} \ar@/_4.3pc/[ddd]_{\sigma_{e, m}} & F(T_{\M}(e)) \otimes F(m) \ar[r]^{v_e,1} \ar[d]^{(u^{\M}_e)^{-1},1} & T_{\N}(e) \otimes F(m) \ar@/^4.3pc/[ddd]^{\hat{\sigma}_{e, F(m)}} \\
F(\phi_{\M}(T_{\D}(e)) \otimes m) \ar[r]^{J^F} \ar[d]^{\beta^{\M}_{T_{\D}(e), m}} & F(\phi_{\M}(T_{\D}(e))) \otimes F(m) \ar[r]^{u^{\M\N}_{T_{\D}(e)},1}  & \phi_{\N}(T_{\D}(e)) \otimes F(m) \ar[u]^{u^{\N}_e,1} \ar[d]_{\beta^{\N}_{T_{\D}(e), F(m)}} \\
F(m \otimes \phi_{\M}(T_{\D}(e))) \ar[r]^{J^F} \ar[d]^{1, u^{\M}_e} & F(m) \otimes F(\phi_{\M}(T_{\D}(e))) \ar[r]^{1,u^{\M\N}_{T_{\D}(e)}} \ar[d]^{1, u^{\M}_e} & F(m) \otimes \phi_{\N}(T_{\D}(e)) \ar[d]_{1, u^{\N}_e} \\
F(m \otimes T_{\M}(e)) \ar[r]^{J^F} & F(m) \otimes F(T_{\M}(e)) \ar[r]^{1, v_e} & F(m) \otimes T_{\N}(e)
} \]
The middle hexagon commutes by the diagram (\ref{braided-functor-E}).
The left-upper and left-down squares commute by the naturality of $J^F$.
The right-upper and right-down squares commute by the definition of $v_e$.
Since $u^{\M}_e: \phi_{\M}(T_{\D}(e)) \simeq T_{\M}(e)$ is the isomorphism in $Z(\M, \E)$, the left-most square commutes for $(T_{\M}(e), \sigma_{e,-}), (\phi_{\M}(T_{\D}(e)), \beta^{\M}_{T_{\D}(e), -}) \in Z(\M, \E)$.
Since $u^{\N}_e: \phi_{\N}(T_{\D}(e)) \simeq T_{\N}(e)$ is the isomorphism in $Z(\N, \E)$, the right-most square commutes for $(\phi_{\N}(T_{\D}(e)), \beta^{\N}_{T_{\D}(e), -}), (T_{\N}(e), \hat{\sigma}_{e,-}) \in Z(\N, \E)$. 
Consider the following diagram:
\[\xymatrix@=4ex{
F(\phi_{\M}(x) \otimes T_{\M}(e)) \ar[r] \ar[d]_{\beta^{\M}_{x,T_{\M}(e)}} & F(\phi_{\M}(x)) \otimes F(T_{\M}(e)) \ar[r]^{u^{\M\N}_x, 1} & \phi_{\N}(x) \otimes F(T_{\M}(e)) \ar[d]^{\beta^{\N}_{x,F(T_{\M}(e))}} \ar[r]^{1, v_e} & \phi_{\N}(x) \otimes T_{\N}(e) \ar[d]^{\beta^{\N}_{x, T_{\N}(e)}} \\
F(T_{\M}(e) \otimes \phi_{\M}(x)) \ar[r] \ar[dd]_{\sigma_{e, \phi_{\M}(x)}} & F(T_{\M}(e)) \otimes F(\phi_{\M}(x)) \ar[r]^{1, u^{\M\N}_x} \ar[dr]_{v_e, 1} & F(T_{\M}(e)) \otimes \phi_{\N}(x)  \ar[r]^{v_e, 1}   & T_{\N}(e) \otimes \phi_{\N}(x) \ar[dd]^{\hat{\sigma}_{e, \phi_{\N}(x)}} \\
& & T_{\N}(e) \otimes F(\phi_{\M}(x)) \ar[ru]_{1, u^{\M\N}_x} \ar[d]_{\hat{\sigma}_{e, F(\phi_{\M}(x))}}  & \\
F(\phi_{\M}(x) \otimes T_{\M}(e)) \ar[r]  & F(\phi_{\M}(x)) \otimes F(T_{\M}(e))  \ar[r]_{1, v_e}&   F(\phi_{\M}(x)) \otimes T_{\N}(e) \ar[r]_{u^{\M\N}_x, 1} &\phi_{\N}(x) \otimes T_{\N}(e)
} \]
The upper hexagon commutes by the diagram (\ref{braided-functor-E}).
The bottom hexagon commutes since $F: \M \to \N$ is a monoidal functor over $\E$.
The right-upper and right-bottom squares commute by the naturalities of $\beta^{\N}_{x,-}$ and $\hat{\sigma}_{e, -}$. 
Then if $\sigma_{e, \phi_{\M}(x)} \circ \beta^{\M}_{x, T_{\M}(e)} = \id$, $\hat{\sigma}_{e, \phi_{\N}(x)} \circ \beta^{\N}_{x, T_{\N}(e)} = \id$.
\end{proof}

Let $\C, \D$ be fully faithful multifusion category over $\E$ and $\M, \N \in \BMod_{\C|\D}(\cat)$.
$\Fun_{\C|\D}(\M, \M)$ is a monoidal $Z(\C)$-$Z(\D)$ bimodule by \cite[Rem.\,2.6.3]{Liang}. That is, a finite monoidal category $\Fun_{\C|\D}(\M, \M)$ equipped with a $\bk$-linear braided monoidal functor $\phi: \overline{Z(\C)} \boxtimes Z(\D) \to Z(\Fun_{\C|\D}(\M, \M))$.
By the universal property of $\boxdot_{\E}$, we get
 \[ \xymatrix{
 \E \boxdot_{\E} (\overline{Z(\C)} \boxtimes Z(\D)) \ar[r]^{1, \boxdot_{\E}} \ar[d]_{1, \phi} &  \E \boxdot_{\E} (\overline{Z(\C)} \boxdot_{\E} Z(\D)) \ar[ld]^{\exists ! \underline{\phi}} \\
 \E \boxdot_{\E} Z(\Fun_{\C|\D}(\M, \M)) &
 } \]
$\underline{\phi}$ induces a functor $\overline{Z(\C, \E)} \boe Z(\D, \E) \simeq (\E \boxdot_{\E} \overline{Z(\C)}) \boe (\E \boxdot_{\E} Z(\D)) \simeq \E \boxdot_{\E} (\overline{Z(\C)} \boxdot_{\E} Z(\D)) \xrightarrow{\underline{\phi}} \E \boxdot_{\E} Z(\Fun_{\C|\D}(\M, \M)) \simeq Z(\Fun_{\C | \D}(\M, \M), \E)$.
The first equivalence is due to $\E \boxdot_{\E} \overline{Z(\C)} \simeq \E \boxdot_{\E} Z(\C^{\rev}) = Z(\C^{\rev}, \E) \simeq \overline{Z(\C, \E)}$.
The second equivalence is due to Rem.\,\ref{rem-sun-prop-bE}.
Then $\Fun_{\C|\D}(\M, \M)$ is equipped with a $Z(\C, \E)$-$Z(\D, \E)$ bimodule.

\begin{expl}
\label{expl-braided-bimodule}
Let $\C$ and $\D$ be fully faithful multifusion categories over $\E$ and $\M \in \BMod_{\C|\D}(\cat)$. Then $\Fun_{\C \boe \D^{\rev}}(\M, \M) \in \BMod_{Z(\C, \E)| Z(\D, \E)}(\Alg(\cat))$.
Since $\E \to Z(\C)$ and $\E \to Z(\D)$ are fully faithful, $\E \to Z(\Fun_{\C \boe \D^{\rev}}(\M, \M))$ is fully faithful by Expl.\,\ref{ex3} and Expl.\,\ref{expl-fully-faith}.
A braided monoidal functor 
\[ \phi: \overline{Z(\C, \E)} \boe Z(\D, \E) \to Z(\Fun_{\C \boe \D^{\rev}}(\M, \M), \E) \]
over $\E$ is defined as an object $\phi(c \boe d) = c \odot - \odot d$ in $\Fun_{\C \boe \D^{\rev}}(\M, \M)$, equipped with a half-braiding 
$c \odot f(-) \odot d \xrightarrow{(s^f_{c,-})^{-1}} f(c \odot -) \odot d \xrightarrow{(t^f_{c \odot -, d})^{-1}} f(c \odot - \odot d)$
for $(c, \beta_{c,-}) \in \overline{Z(\C, \E)}, (d, \gamma_{d,-}) \in Z(\D, \E)$ and $(F, s^f, t^f) \in \Fun_{\C \boe \D^{\rev}}(\M, \M)$.
The left $\C$-module structure $s^{c \odot - \odot d}$ and right $\D$-module structure $t^{c \odot - \odot d}$ on $c \odot - \odot d$ are defined as 
\[  s^{c \odot - \odot d}_{c_1, -}: (c \odot (c_1 \odot -)) \odot d \xrightarrow{s^{c \odot -}_{c_1, -} = \beta_{c, c_1},1} (c_1 \odot (c \odot -)) \odot d  \xrightarrow{s^{- \odot d}_{c_1, c \odot -}} c_1 \odot ((c \odot -) \odot d)\]
\[ t^{c \odot - \odot d}_{-, d_1}: (c \odot (- \odot d_1)) \odot d \xrightarrow{t^{c \odot -}_{-, d_1}, 1} ((c \odot -) \odot d_1) \odot d \xrightarrow{t^{- \odot d}_{c \odot -, d_1} = \gamma_{d, d_1}^{-1}}  ((c \odot -) \odot d) \odot d_1\]
for $c_1 \in \C, d_1 \in \D$.
Check that $c \odot - \odot d \in \Fun_{\C \boe \D^{\rev}}(\M, \M)$ (see diagram (\ref{CD-bim-fun})).
The functor $(c \odot -, s^{c \odot -}, t^{c \odot -})$ belongs to $\Fun_{\C \boe \D^{\rev}}(\M, \M)$ if the right square of the following diagram
\[ \xymatrix{
(c \otimes T_{\C}(e)) \odot m \ar[r]  \ar[d]^{\beta_{c, T_{\C}(e)}} & c \odot (T_{\C}(e) \odot m) \ar[r]^{1, (v^{\M}_e)_m}  \ar@{.>}[d]^{s^{c \odot -}_{T_{\C}(e), m}} & c \odot (m \odot T_{\D}(e)) \ar[d]^{t^{c \odot -}_{m, T_{\D}(e)}} \\
(T_{\C}(e) \otimes c) \odot m \ar[r]  \ar@/^1pc/[u]^{z_{e, c}} & T_{\C}(e) \odot (c \odot m) \ar[r]_{(v^{\M}_e)_{c \odot m}}  & (c \odot m) \odot T_{\D}(e)
} \]
commutes for $m \in \M$ and $(T_{\C}(e), z_{e, -}) \in Z(\C)$.
The left square commutes by the definition of $s^{c \odot -}$ and the equation $\beta_{c, T_{\C}(e)} \circ z_{e, c} = \id$. The outward diagram commutes by \cite[Rem.\,4.15]{Wei}. 
The functor $(- \odot d, s^{- \odot d}, t^{- \odot d})$ belongs to $\Fun_{\C \boe \D^{\rev}}(\M, \M)$ if the right square of the following diagram
\[ \xymatrix{
m \odot (T_{\D}(e) \otimes d) \ar[r]   & (m \odot T_{\D}(e)) \odot d \ar[r]^{(v^{\M}_e)_m^{-1}, 1} \ar@{.>}[d]^{t^{- \odot d}_{m, T_{\D}(e)}} & (T_{\C}(e) \odot m) \odot d  \ar[d]^{s^{- \odot d}_{T_{\C}(e), m}} \\
m \odot (d \otimes T_{\D}(e)) \ar[r] \ar@/^1pc/[u]^{1, \hat{z}^{-1}_{e, d}} \ar[u]_{1, \gamma_{d, T_{\D}(e)}}& (m \odot d) \odot T_{\D}(e) \ar[r]^{(v^{\M}_e)_{m \odot d}^{-1}} & T_{\C}(e) \odot (m \odot d)
} \]
commutes for $m \in \M$ and $(T_{\D}(e), \hat{z}_{e,-}) \in Z(\D)$.
The left square commutes by the definition of $t^{- \odot d}$ and the equation $\gamma_{d, T_{\D}(e)} \circ \hat{z}_{e, d} = \id$.
The outward diagram commutes by \cite[Rem.\,4.15]{Wei}.
The functor $(c \odot - \odot d, s^{c \odot - \odot d}, t^{c \odot - \odot d})$ belongs to $\Fun_{\C \boe \D^{\rev}}(\M, \M)$ by the commutative outward diagram
\[ \xymatrix{
(c \odot (T_{\C}(e) \odot m)) \odot d  \ar[d]_{1, (v^{\M}_e)_m, 1} \ar[r]^{s^{c \odot -}_{T_{\C}(e), m}, 1} & (T_{\C}(e) \odot (c \odot m)) \odot d \ar[d]^{(v^{\M}_e)_{c \odot m}, 1} \ar[r]^{s^{- \odot d}_{T_{\C}(e), c \odot m}} & T_{\C}(e) \odot ((c \odot m) \odot d) \ar[d]^{(v^{\M}_e)_{(c \odot m) \odot d}}  \\
(c \odot (m \odot T_{\D}(e))) \odot d \ar[r]_{t^{c \odot -}_{m, T_{\D}(e)}, 1} & ((c \odot m) \odot T_{\D}(e)) \odot d \ar[r]_{t^{- \odot d}_{c \odot m, T_{\D}(e)}} & ((c \odot m) \odot d) \odot T_{\D}(e) 
} \]
The left square commutes since $(c \odot -, s^{c \odot -}, t^{c \odot -})$ belongs to $\Fun_{\C \boe \D^{\rev}}(\M, \M)$. The right square commutes since $(- \odot d, s^{- \odot d}, t^{- \odot d})$ belongs to $\Fun_{\C \boe \D^{\rev}}(\M, \M)$.

Recall Expl.\,\ref{expl-fun-3.9}.
An object $(f, s^f, t^f) \in Z(\Fun_{\C \boe \D^{\rev}}(\M, \M), \E)$ is an object $(f, \sigma_{f, -}: f \circ - \to - \circ f) \in Z(\Fun_{\C \boe \D^{\rev}}(\M, \M))$ such that 
$\big( \hat{T}^e \circ f \xrightarrow{(s^f)^{-1}} f \circ \hat{T}^e \xrightarrow{\sigma_{f, \hat{T}^e}} \hat{T}^e \circ f \big) = \id_{\hat{T}^e \circ f}$.
The functor $c \odot - \odot d$ belongs to $Z(\Fun_{\C \boe \D^{\rev}}(\M, \M), \E)$ if the composition
\[ T_{\C}(e) \odot ((c \odot -) \odot d) \xrightarrow{(s^{- \odot d}_{T_{\C}(e), c \odot -})^{-1}} (T_{\C}(e) \odot (c \odot -)) \odot d \xrightarrow{(s^{c \odot -}_{T_{\C}(e), -})^{-1} = \beta^{-1}_{c, T_{\C}(e)}} (c \odot (T_{\C}(e) \odot -)) \odot d \]\[\xrightarrow{(s^{\hat{T}^e}_{c, -})^{-1} =z^{-1}_{e, c}} (T_{\C}(e) \odot (c \odot -)) \odot d \xrightarrow{(t^{\hat{T}^e}_{c \odot -, d})^{-1}} T_{\C}(e) \odot ((c \odot -) \odot d) \]
equals to identity. Since $\beta_{c, T_{\C}(e)} \circ z_{e, c} = \id$, the above composition equals to identity.

The monoidal structure on $\phi$ is defined as
$(c_1 \otimes c_2) \odot - \odot (d_1 \otimes d_2) \xrightarrow{1,1,1, \gamma_{d_1, d_2}} c_1 \odot (c_2 \odot - \odot d_2) \odot d_1$
for $(c_1, \beta_{c_1, -}), (c_2, \beta_{c_2, -}) \in \overline{Z(\C, \E)}$, $(d_1, \gamma_{d_1, -}), (d_2, \gamma_{d_2, -}) \in Z(\D, \E)$.
Recall Expl.\,\ref{ex3}, \ref{ex-center-BFCe} and \ref{expl-fun-3.9}.
The braided monoidal functor $\phi$ equipped with the monoidal natural isomorphism $\phi(T_{\C}(e) \boe \unit_{\D}) = T_{\C}(e) \odot - \odot \unit_{\D} \simeq T_{\C}(e) \odot - = \hat{T}^e$ is the braided monoidal functor over $\E$.
\end{expl}

\subsection{Functoriality of the center}

\begin{prop}
\label{prop-C-3.16}
Let $\C$ be a multifusion category over $\E$. There are equivalences of multifusion categories over $\E$:
\[ \Fun_{\C^{\rev}}(\C, \C) \simeq \C \qquad \Fun_{\C}(\C, \C) \simeq \C^{\rev} \]
\end{prop}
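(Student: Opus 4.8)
The plan is to realize both equivalences through left- and right-multiplication functors, treating the underlying monoidal equivalence as the classical identification of the endofunctor category of the regular module and then upgrading it to the setting over $\E$. Define $L \colon \C \to \Fun_{\C^{\rev}}(\C, \C)$ by $c \mapsto (c \otimes -)$ and $R \colon \C^{\rev} \to \Fun_{\C}(\C, \C)$ by $c \mapsto (- \otimes c)$. Since a left $\C^{\rev}$-module functor is the same as a right $\C$-module functor, the functor $c \otimes -$ lands in $\Fun_{\C^{\rev}}(\C,\C)$, and dually $- \otimes c$ is a left $\C$-module functor. The monoidal structures are recorded by $(c_1 \otimes -) \circ (c_2 \otimes -) = (c_1 \otimes c_2) \otimes -$ and $(- \otimes c_1) \circ (- \otimes c_2) = - \otimes (c_2 \otimes c_1)$, so $L$ is monoidal for the tensor product of $\C$ while $R$ is monoidal for the reversed tensor product of $\C^{\rev}$. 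That $L$ and $R$ are equivalences is the standard fact that a module endofunctor of the regular module is determined up to isomorphism by the image of $\unit_{\C}$ (giving essential surjectivity, since $F \simeq F(\unit_{\C}) \otimes -$, resp.\ $F \simeq - \otimes F(\unit_{\C})$), with fully faithfulness a direct check on module natural transformations \cite{EGNO}.

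It then remains to promote $L$ and $R$ to monoidal functors over $\E$ in the sense of Def.~\ref{defn-oE}. By Expl.~\ref{expl-fun-3.9} the central functor $\hat{T}$ of the target sends $e$ to multiplication by $T_{\C}(e)$: concretely $\hat{T}^e = - \otimes T_{\C}(e)$ in $\Fun_{\C^{\rev}}(\C, \C)$ and $\hat{T}^e = T_{\C}(e) \otimes -$ in $\Fun_{\C}(\C, \C)$, with module structure built from the half-braiding $z_{e,-}$ of $(T_{\C}(e), z_{e,-}) \in Z(\C)$. On the other hand $L(T_{\C}(e)) = T_{\C}(e) \otimes -$ and $R(T_{\C^{\rev}}(e)) = - \otimes T_{\C}(e)$. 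Thus in each case the two functors differ exactly by the half-braiding, and I would take the structure of monoidal functor over $\E$ to be
\[ u_e \colon L(T_{\C}(e)) = T_{\C}(e) \otimes - \xrightarrow{\ z_{e,-}\ } - \otimes T_{\C}(e) = \hat{T}^e, \qquad u_e \colon R(T_{\C^{\rev}}(e)) = - \otimes T_{\C}(e) \xrightarrow{\ z_{e,-}^{-1}\ } T_{\C}(e) \otimes - = \hat{T}^e. \]

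The main work, and where I expect the only real obstacle, is to verify that $u_e$ is a monoidal natural isomorphism of module functors satisfying the compatibility square of Def.~\ref{defn-oE}. That $u_e$ is a morphism of module functors and is monoidal in $e$ reduces to the naturality and hexagon axioms of the half-braiding $z$ together with the fact that $T_{\C} \colon \E \to Z(\C)$ is braided, so that $z_{e,T_{\C}(e')}$ is controlled by the symmetric braiding $r$ of $\E$ and is compatible with $T_{\C}(e \otimes e') \simeq T_{\C}(e) \otimes T_{\C}(e')$. The compatibility square then unwinds, using the description of the central structure of $\hat{T}$ in Expl.~\ref{expl-fun-3.9} as $\sigma_{e,g} = (s^{g}_{T_{\C}(e),-})^{-1}$, to the single statement that $z_{e,-}$ is precisely the half-braiding making $T_{\C}(e)$ central; no further relation is needed. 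Once this is checked, $L$ and $R$ are monoidal equivalences carrying the $\E$-structure, hence equivalences of multifusion categories over $\E$, which yields both claimed equivalences.
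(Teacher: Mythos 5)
Your proposal is correct and takes essentially the same route as the paper: the paper also realizes the equivalence via the multiplication functor $\Psi\colon x \mapsto x \otimes -$ with inverse given by evaluation at $\unit_{\C}$, takes $e \mapsto - \otimes T_{\C}(e)$ (with central structure coming from the module-functor structure) as the monoidal functor $\E \to \Fun_{\C^{\rev}}(\C,\C)$, and uses exactly the half-braiding $z_{e,-}$ of $(T_{\C}(e), z_{e,-}) \in Z(\C)$ as the structure $u_e$ of a monoidal functor over $\E$. The only difference is cosmetic: the paper writes out just the case $\Fun_{\C^{\rev}}(\C,\C) \simeq \C$ and leaves the reversed case to symmetry, whereas you spell out both sides explicitly.
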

\begin{proof}
We have a functor $\Phi: \Fun_{|\C}(\C, \C) \to \C$ defined by
$F \mapsto F(\unit_{\C}), (F \xrightarrow{\phi} G) \mapsto (F(\unit_{\C}) \xrightarrow{\phi_{\unit_{\C}}} G(\unit_{\C}))$.
The inverse of $\Phi$ is defined as $\Psi: \C \to \Fun_{|\C}(\C, \C)$, $x \mapsto x \otimes -$. 
The central structure on the monoidal functor $\E \to \Fun_{|\C}(\C, \C)$, $e \mapsto - \otimes T_{\C}(e)$ is a natural isomorphism 
$\sigma_{e, g}: g(-) \otimes T_{\C}(e) \simeq g(- \otimes T_{\C}(e))$
for any $e \in \E$, $g \in \Fun_{|\C}(\C, \C)$.
The monoidal functor $\Psi$ equipped with a monoidal natural isomorphism $u_e: T_{\C}(e) \otimes - \xrightarrow{c_{e, -}} - \otimes T_{\C}(e)$ is a monoidal functor over $\E$, where $(T_{\C}(e), c_{e,-}) \in Z(\C)$. 
\end{proof}

\begin{expl}
\label{expl-fun}
Let $\C$ be a multifusion category over $\E$ and $(\M, u^{\M}), (\N, u^{\N}) \in \LMod_{\C}(\cat)$. Then $\Fun_{\E}(\M, \N) \in \BMod_{\C|\C}(\cat)$.
The $\C$-$\C$ bimodule structure on $\Fun_{\E}(\M, \N)$ is defined as $c \odot g \odot c' \coloneqq c \odot g( c' \odot -)$ for $e \in \E, c, c' \in \C, g \in \Fun_{\E}(\M, \N)$.
$\Fun_{\E}(\M, \N)$ equipped with the monoidal natural isomorphism $v^{\Fun}_e$  belongs to $\BMod_{\C|\C}(\cat)$, where
\[ (v^{\Fun}_e)_g: (T_{\C}(e) \odot g)(-) = T_{\C}(e) \odot g(-) \xrightarrow{(u^{\N}_e)_{g(-)}}  e \odot g(-) \simeq  g(e \odot -) \xrightarrow{(u^{\M}_e)^{-1}} g(T_{\C}(e) \odot -) = (g \odot T_{\C}(e))(-). \]
\end{expl}

\begin{prop}
\label{prop-3.18}
Let $\C$ and $\D$ be multifusion categories over $\E$ and $(\M, u^{\M}, \bar{u}^{\M}), (\M', u^{\M'}, \bar{u}^{\M'})$ belong to $\BMod_{\C|\D}(\cat)$. There is an equivalence
\[ \Fun_{\C \boe \C^{\rev}} (\C, \Fun_{\D^{\rev}}(\M, \M')) \simeq \Fun_{\C \boe \D^{\rev}}(\M, \M') \]
If $\D = \E$, we have $\Fun_{\C \boe \C^{\rev}}(\C, \Fun_{\E}(\M, \M')) \simeq \Fun_{\C}(\M, \M')$.
\end{prop}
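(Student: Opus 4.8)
The plan is to imitate Prop.~\ref{prop-C-3.16}: I would produce an ``evaluation at the unit'' functor $\Phi$ and a ``tensoring'' functor $\Psi$ in the opposite direction and show they are mutually inverse. Write $\CK \coloneqq \Fun_{\D^{\rev}}(\M, \M')$. By the same construction as in Expl.~\ref{expl-fun}, $\CK$ is an object of $\BMod_{\C|\C}(\cat)$: the left $\C$-action is $c \odot g \coloneqq c \odot g(-)$ (post-composition with the left $\C$-action on $\M'$), the right $\C$-action is $g \odot c \coloneqq g(c \odot -)$ (pre-composition with the left $\C$-action on $\M$), and the two structure isomorphisms $T_{\C}(e) \odot g \simeq e \odot g$ and $g \odot T_{\C}(e) \simeq e \odot g$ are assembled from the $u^{\M}, u^{\M'}$ and the $\E$-module structure $\tilde{s}^{g}$ exactly as the isomorphism $v^{\Fun}$ is in Expl.~\ref{expl-fun}. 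With this in hand, the equivalence over a general $\D$ specialises at $\D = \E$ to the second assertion, since there $\CK = \Fun_{\E}(\M, \M')$ and $\Fun_{\C \boe \E^{\rev}}(\M, \M') = \Fun_{\C}(\M, \M')$.

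For the forward direction, I send a $1$-morphism $F \colon \C \to \CK$ in $\BMod_{\C|\C}(\cat)$ to $G \coloneqq F(\unit_{\C})$, which is a right $\D$-module functor $\M \to \M'$ compatible with the $\E$-action. Evaluating the left $\C$-module structure of $F$ at the unit gives $F(c) \xrightarrow{s^{F}_{c, \unit_{\C}}} c \odot F(\unit_{\C}) = c \odot G(-)$, and evaluating the right $\C$-module structure gives $F(c) \xrightarrow{t^{F}_{\unit_{\C}, c}} F(\unit_{\C}) \odot c = G(c \odot -)$; composing the second with the inverse of the first produces a natural isomorphism $s^{G}_{c, -} \colon G(c \odot -) \simeq c \odot G(-)$. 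I would check that $s^{G}$ is a genuine left $\C$-module structure (its coherence is forced by the bimodule coherence of $F$ and the compatibility of $s^{F}$ with $t^{F}$), so that $G$ becomes a $\C$-$\D$ bimodule functor, and that the $\E$-compatibility diagrams~(\ref{d2}) for $G$ follow from the diagrams~(\ref{d2}) for $F$ together with the description of the two $u^{\C}$'s on $\CK$. On $2$-morphisms, $\Phi$ is evaluation at $\unit_{\C}$.

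The inverse $\Psi$ sends a $1$-morphism $G \colon \M \to \M'$ in $\BMod_{\C|\D}(\cat)$ to $\Psi(G) \colon \C \to \CK$, $c \mapsto c \odot G(-)$. Its left $\C$-module structure is the associativity isomorphism $(c' \otimes c) \odot G(-) \simeq c' \odot \bigl(c \odot G(-)\bigr)$, while its right $\C$-module structure reintroduces the left $\C$-structure of $G$ via $(c \otimes c') \odot G(-) \simeq c \odot \bigl(c' \odot G(-)\bigr) \xrightarrow{1,(s^{G}_{c',-})^{-1}} c \odot G(c' \odot -) = \Psi(G)(c) \odot c'$. The remaining bimodule and $\E$-compatibility conditions for $\Psi(G)$ follow from the bimodule axioms of $G$. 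The two composites are naturally isomorphic to the identities through the unit isomorphisms $\unit_{\C} \odot G(-) \simeq G(-)$ (giving $\Phi\Psi \cong \id$) and $F(-) \simeq (-) \odot F(\unit_{\C})$, i.e.\ $s^{F}_{-, \unit_{\C}}$ (giving $\Psi\Phi \cong \id$), just as in Prop.~\ref{prop-C-3.16}.

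I expect the main difficulty to be the bookkeeping around the $\E$-module compatibility~(\ref{d2}): one must verify that the left $\C$-structure $s^{G}$ manufactured out of $s^{F}$ and $t^{F}$ is correctly compatible with the $u^{\C}$, $u^{\D}$ and $\tilde{s}$ data, i.e.\ that $F$ satisfies~(\ref{d2}) in $\BMod_{\C|\C}(\cat)$ if and only if $G = F(\unit_{\C})$ satisfies~(\ref{d2}) in $\BMod_{\C|\D}(\cat)$. This is a diagram chase of the same flavour as those in Sections~2 and~3; the key point is that the right $\C$-action's structure isomorphism on $\CK$ already encodes both $u^{\M}$ and the $\E$-module structure $\tilde{s}^{g}$, so that the two one-sided module structures of $F$ at $\unit_{\C}$ recombine into exactly the data of a $\C$-$\D$ bimodule $1$-morphism $\M \to \M'$.
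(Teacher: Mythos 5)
Your proposal takes essentially the same route as the paper: the paper's $\Phi$ is likewise evaluation at $\unit_{\C}$, with the left $\C$-structure on $f^{\unit_{\C}}$ assembled from $(t^f_{\unit_{\C},c})^{-1}$, the unit identification $z_{\unit_{\C},c}$, and $s^f_{c,\unit_{\C}}$, and the bulk of the paper's proof is exactly the $\E$-compatibility chases against (\ref{d2}) and (\ref{CD-bim-fun}) that you flag as the main bookkeeping. The only (cosmetic) deviation is your $\Psi(G)(c) = c \odot G(-)$ versus the paper's $g^{c} = g(c \odot -)$, which are naturally isomorphic via $s^{G}$, so the two constructions of the inverse agree up to isomorphism and the argument is the same.
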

\begin{proof}
The category $\C$ equipped with the monoidal natural isomorphism $T_{\C}(e) \otimes - \xrightarrow{z_{e,-}} - \otimes T_{\C}(e)$ belongs to $\BMod_{\C|\C}(\cat)$ for $e \in \E$ and $(T_{\C}(e), z_{e,-}) \in Z(\C)$.  
We define a functor
\[ \Phi: \Fun_{\C \boe \C^{\rev}}(\C, \Fun_{\E \boe \D^{\rev}}(\M, \M')) \to \Fun_{\C \boe \D^{\rev}}(\M, \M'), \qquad f \mapsto f^{\unit_{\C}} \]
 Since $(f, s^f, t^f, \tilde{s}^f): \C \to \Fun_{\E \boe \D^{\rev}}(\M, \M')$, $c \mapsto (f^c, \tilde{s}^{f^c}, t^{f^c})$, then $f$ and $f^c$ satisfy the diagrams (see diagram (\ref{CD-bim-fun}) and the right diagram of (\ref{d2})) 
 \begin{equation}
 \label{diag-3-first}
 \begin{split}
  \vcenter{\xymatrix{
 f^{T_{\C}(e) \otimes c} \ar[r]^{z_{e, c}} \ar[d]_{s^f_{T_{\C}(e), c}} & f^{c \otimes T_{\C}(e)} \ar[d]^{t^f_{c, T_{\C}(e)}} \\
 T_{\C}(e) \odot f^c \ar[r]_{(v^{\Fun}_e)_{f^c}} & f^c \odot T_{\C}(e)
 }}
 \qquad
 \vcenter{\xymatrix{
 f^c(e \odot m) \ar[r]^(0.4){(\bar{u}^{\M}_e)^{-1}_{m}} \ar[d]_{\tilde{s}^{f^c}_{e, m}} & f^c(m \odot T_{\D}(e)) \ar[d]^{t^{f^c}_{m, T_{\D}(e)}} \\
 e \odot f^c(m) \ar[r]_(0.45){(\bar{u}^{\M'}_e)^{-1}_{f^c(m)}} & f^c(m) \odot T_{\D}(e)
 }} 
 \end{split}
 \end{equation}
 for any $c \in \C$, $e \in \E$, $m \in \M$.
 The left $\C$-module structure on the functor $f^{\unit_{\C}}$ is induced by
 \[ s^{f^{\unit_{\C}}}_{c, m}: f^{\unit_{\C}}(c \odot m) = (f^{\unit_{\C}} \odot c)(m) \xrightarrow{(t^f_{\unit_{\C}, c})^{-1}} f^{\unit_{\C} \otimes c}(m) \xrightarrow{z_{\unit_{\C}, c}} f^{c \otimes \unit_{\C}}(m) \xrightarrow{s^f_{c, \unit_{\C}}} (c \odot f^{\unit_{\C}})(m) = c \odot f^{\unit_{\C}}(m) \]
 for $c \in \C$ and $(\unit_{\C}, z_{\unit_{\C},-}) \in Z(\C)$.
Next check that $(f^{\unit_{\C}}, s^{f^{\unit_{\C}}}, t^{f^{\unit_{\C}}}, \tilde{s}^{f^{\unit_{\C}}})$ satisfies the right two squares below.
\[ \xymatrix{
f^{\unit_{\C} \otimes T_{\C}(e)}(m) \ar[r]^(0.3){t^f_{\unit_{\C}, T_{\C}(e)}} \ar[d]_{z^{-1}_{e, \unit_{\C}}} \ar@/^1pc/[d]^{z_{\unit_{\C}, T_{\C}(e)}} & (f^{\unit_{\C}} \odot T_{\C}(e)) (m) = f^{\unit_{\C}}(T_{\C}(e) \odot m) \ar[r]^(0.7){(u^{\M}_e)_m} \ar@<+13mm>[d]_{s^{f^{\unit_{\C}}}_{T_{\C}(e), m}} & f^{\unit_{\C}}(e \odot m) \ar[d]_{\tilde{s}^{f^{\unit_{\C}}}_{e,m}} \ar[r]^(0.45){(\bar{u}^{\M}_e)^{-1}_m} & f^{\unit_{\C}}(m \odot T_{\D}(e)) \ar[d]^{t^{f^{\unit_{\C}}}_{m, T_{\D}(e)}} \\
f^{T_{\C}(e) \otimes \unit_{\C}}(m) \ar[r]_(0.3){s^f_{T_{\C}(e), \unit_{\C}}} & (T_{\C}(e) \odot f^{\unit_{\C}})(m) = T_{\C}(e) \odot f^{\unit_{\C}}(m) \ar[r]_(0.7){(u^{\M'}_e)_{f^{\unit_{\C}}(m)}} \ar@<14mm>@{.>}[u]_{(v^{\Fun}_e)_{f^{\unit_{\C}}}}  &  e \odot f^{\unit_{\C}}(m) \ar[r]_{(\bar{u}^{\M'}_e)^{-1}_{f^{\unit_{\C}}(m)}} & f^{\unit_{\C}}(m) \odot T_{\D}(e)
} \]
The left-most square commutes by the left square of (\ref{diag-3-first}).
By \cite[Prop.\,A.3]{Wei}, the equation $z_{e, \unit_{\C}} \circ z_{\unit_{\C}, T_{\C}(e)} = \id$ holds. 
 By the definitions of $(v^{\Fun}_e)_{f^{\unit_{\C}}}$ and $s^{f^{\unit_{\C}}}$, the middle square commutes.
The right-most square commutes by the right diagram of (\ref{diag-3-first}).
Then $f^{\unit_{\C}}$ belongs to $\Fun_{\C \boe \D^{\rev}}(\M, \M')$.

Conversely,  define a functor 
\[ \Psi: \Fun_{\C \boe \D^{\rev}}(\M, \M') \to \Fun_{\C \boe \C^{\rev}}(\C, \Fun_{\E \boe \D^{\rev}}(\M, \M')), \qquad g \mapsto (g: c \mapsto g^c ) \]
where $g^c \coloneqq g(c \odot -)$.
Given a functor $(g, s^g, t^g, \tilde{s}^g) \in \Fun_{\C \boe \D^{\rev}}(\M, \M')$, the $\C$-bimodule structure on $g: \C \to \Fun_{\E \boe \D^{\rev}}(\M, \M')$ is defined as
\[ g^{\tilde{c} \otimes c} = g((\tilde{c} \otimes c) \odot -) \simeq g(\tilde{c} \odot (c \odot -)) \xrightarrow{s^g_{\tilde{c}, c \odot -}} \tilde{c} \odot g(c \odot -) = \tilde{c} \odot g^c \]
\[ g^{c \otimes \tilde{c}} = g((c \otimes \tilde{c}) \odot -) \simeq g(c \odot (\tilde{c} \odot -)) = g^c(\tilde{c} \odot -) = g^c \odot \tilde{c} \]
for $\tilde{c} \in \C$. The left $\E$-module structure $\tilde{s}^{g^c}$ and right $\D$-module structure $t^{g^c}$ on $g^c$ are defined as
\[ \tilde{s}^{g^c}_{e,-}: g^c(e \odot -) =  g(c \odot (e \odot -))  \xrightarrow{s^{c \odot -}_{e, -}}  g(e \odot (c \odot -)) \xrightarrow{\tilde{s}^g_{e, c \odot -}}  e \odot g(c \odot -) = e \odot g^c(-) \]
\[ t^{g^c}_{-, d}:  g^c(- \odot d) = g(c \odot (- \odot d))  \xrightarrow{\simeq} g((c \odot -) \odot d) \xrightarrow{t^g_{c \odot-, d}}  g(c \odot -) \odot d = g^c(-) \odot d \]
for $e \in \E$, $(c \odot -, s^{c \odot -}) \in \Fun_{\E}(\M, \M)$.
By Expl.\,\ref{expl-fun}, $(v^{\Fun}_e)_{g^c}$ is defined as
\[ T_{\C}(e) \odot g^c = T_{\C}(e) \odot g^c(-) \xrightarrow{(u^{\M'}_e)_{g^c(-)}} e \odot g^c(-) \xrightarrow{(\tilde{s}^{g^c}_{e,-})^{-1}} g^c(e \odot -) \xrightarrow{(u^{\M}_e)^{-1}}  g^c(T_{\C}(e) \odot -) = g^c \odot T_{\C}(e) \]
Check that $(g^c, \tilde{s}^{g^c}, t^{g^c})$ belongs to $\Fun_{\E \boe \D^{\rev}}(\M, \M')$ (see the right diagram of (\ref{d2})).
\[ \xymatrix{
g(c \odot (e \odot -)) \ar[d]_{(\bar{u}^{\M}_e)^{-1}} \ar[r]^{s^{c \odot -}_{e,-}}& g(e \odot (c \odot -)) \ar[d]^{(\bar{u}^{\M}_e)^{-1}_{c \odot -}} \ar[r]^{\tilde{s}^g_{e, c \odot -}} & e \odot g(c \odot -)  \ar[d]^{(\bar{u}^{\M'}_e)^{-1}_{g(c \odot -)}} \\
g(c \odot (- \odot T_{\D}(e))) \ar[r]_{\simeq} & g((c \odot -) \odot T_{\D}(e)) \ar[r]_{t^g_{c \odot-, T_{\D}(e)}} & g(c \odot -) \odot T_{\D}(e)
} \]  
The left square commutes by \cite[Rem.\,4.15]{Wei}.
The right square commutes since $g \in \Fun_{\E \boe \D^{\rev}}(\M, \M')$ (see the right diagram of (\ref{d2})). 
Then check $g$ belongs to $\Fun_{\C \boe \C^{\rev}}(\C, \Fun_{\E \boe \D^{\rev}}(\M, \M'))$.
\[ \xymatrix{
g^{T_{\C}(e) \otimes c} = g((T_{\C}(e) \otimes c) \odot -) \ar[dd]_{z_{e, c}, 1} \ar[r]^{s^g_{T_{\C}(e), c \odot -}} \ar[rrd]_{(u^{\M}_e)_{c \odot -}} & T_{\C}(e) \odot g(c \odot -) = T_{\C}(e) \odot g^c \ar[r]^(0.67){(u^{\M'}_e)_{g(c \odot -)}} \ar@<+12mm>@{.>}[dd]^(0.7){(v^{\Fun}_e)_{g^c}} & e \odot g(c \odot -) \ar[d]^{(\tilde{s}^g_{e, c \odot -})^{-1}} \\
& & g(e \odot (c \odot -)) \ar[d]^{(\tilde{s}^{c \odot -}_{e, -})^{-1}} \\
g^{c \otimes T_{\C}(e)} = g((c \otimes T_{\C}(e)) \odot -) \ar[r] & g(c \odot T_{\C}(e) \odot -) = g^c \odot T_{\C}(e) & g(c \odot e \odot -)  \ar[l]_(0.3){1, (u^{\M}_e)^{-1}}
} \]
By the definition of $(v^{\Fun}_e)_{g^c}$, we want to check the outward diagram commutes.  The upper square commutes since $g \in \Fun_{\C \boe \E}(\M, \M')$ (see the left diagram of (\ref{d2})). The lower pentagon commutes since $(\M, u^{\M}) \in \LMod_{\C}(\cat)$ (see the diagram (\ref{dig-bimodule1})).
Check that $\Psi \circ \Phi \simeq \id$ and $\Phi \circ \Psi \simeq \id$.
\end{proof} 

Let $\C$ and $\D$ be rigid monoidal categories and $\CP$ a $\C$-$\D$ bimodule. The $\D$-$\C$ bimodule structures on $\CP^{R|\op|L}$ is defined as $d \odot^R p \odot^L c \coloneqq c^L \odot p \odot d^R$ for $c \in \C$, $d \in \D$ and $p \in \CP$.
For a multifusion category $\C$ over $\E$, we denote $I_{\C} = [\unit_{\C}, \unit_{\C}]^R_{\C \boe \C^{\rev}} \in \C \boe \C^{\rev}$.
Given a left $\C$-module $\M$, we use ${}^{LL}_{\C}\!\M$ to denote the left $\C$-module which has the same underlying category as $\M$ but equipped with the action $a \odot^{LL} m = a^{LL} \odot m$ for $a \in \C, m \in \M$.
Given a right $\C$-module $N$, we use $\N^{RR}_{\C}$ to denote ${}^{LL}_{\C^{\rev}}\!\N$.

\begin{lem}
\label{lem-1.30}
Let $\C$ be a multifusion category over $\E$ and $\M, \M' \in \LMod_{\C}(\cat)$. There is an equivalence 
\[ \C^{\op|L} \boxtimes_{\C^{\rev} \boe \C} \Fun_{\E}(\M, \M') \simeq \Fun_{\C}(\M, \M') \]
which maps $\unit_{\C} \boxtimes_{\C^{\rev} \boe \C} f \mapsto I_{\C} \odot f$, and an equivalence
\[ \C \boxtimes_{\C^{\rev} \boe \C} \Fun_{\E}({}^{LL}_{\C}\!\M, \M') \simeq \Fun_{\C}(\M, \M') \]
which maps $\unit_{\C} \boxtimes_{\C^{\rev} \boe \C} f \mapsto I_{\C} \odot f$.
\end{lem}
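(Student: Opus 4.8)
The plan is to deduce both equivalences from Prop.\,\ref{prop-3.18} (in the case $\D=\E$), which already identifies $\Fun_{\C\boe\C^{\rev}}(\C,\Fun_{\E}(\M,\M'))$ with $\Fun_{\C}(\M,\M')$, and then to rewrite the bimodule-functor category on the left as a relative tensor product. The bridge is an Eilenberg--Watts-type duality for finite semisimple modules over the multifusion category $\mathcal{B}\coloneqq\C^{\rev}\boe\C\cong\C\boe\C^{\rev}$: for finite semisimple left $\mathcal{B}$-modules $\CP$ and $\CQ$ one has $\Fun_{\mathcal{B}}(\CP,\CQ)\simeq\CP^{\vee}\boxtimes_{\mathcal{B}}\CQ$, where $\CP^{\vee}$ is the dual right $\mathcal{B}$-module (cf.\ \cite{Liang}). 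I would apply this with $\CP=\C$ the regular $\C$-$\C$-bimodule (viewed as a left $\mathcal{B}$-module via its two commuting $\C$-actions) and $\CQ=\Fun_{\E}(\M,\M')$ carrying the bimodule structure of Expl.\,\ref{expl-fun}.

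The first step is to identify the dual $\C^{\vee}$ of the regular bimodule. Since $\C$ is rigid, the internal hom computing the pairing is $[\,x,y\,]\cong y\otimes x^{L}$ on each $\C$-side, so $\C^{\vee}$ is the opposite category with both actions twisted by left duals; this is precisely the $\C$-$\C$-bimodule $\C^{\op|L}$ recorded before the statement. Hence the duality specializes to $\Fun_{\mathcal{B}}(\C,\Fun_{\E}(\M,\M'))\simeq\C^{\op|L}\boxtimes_{\mathcal{B}}\Fun_{\E}(\M,\M')$, and combining this with Prop.\,\ref{prop-3.18} yields the first claimed equivalence. To pin down the image of $\unit_{\C}\boxtimes_{\mathcal{B}}f$ I would compose the two equivalences explicitly: the Eilenberg--Watts functor sends $c\boxtimes_{\mathcal{B}}q$ to the bimodule functor $x\mapsto[\,x,c\,]\odot q$ given by the internal hom of $\C$, while $\Phi$ of Prop.\,\ref{prop-3.18} evaluates such a functor at $\unit_{\C}$. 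For $c=\unit_{\C}$ this returns $[\,\unit_{\C},\unit_{\C}\,]\odot f=I_{\C}\odot f$, matching the stated formula, and the balancing built into $\boxtimes_{\mathcal{B}}$ is exactly what forces $I_{\C}\odot f$ to be a $\C$-module functor.

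For the second equivalence I would instead keep $\C$ (the \emph{untwisted} regular right $\mathcal{B}$-module) in the first tensor factor and absorb the dualization into the module category. Passing from $\M$ to ${}^{LL}_{\C}\!\M$ leaves the underlying $\E$-module unchanged (since $T_{\C}$ lands in the pivotal category $\E$, where $(-)^{LL}\cong\id$ canonically), so $\Fun_{\E}({}^{LL}_{\C}\!\M,\M')$ has the same underlying category as $\Fun_{\E}(\M,\M')$ but with the right $\C$-action (precomposition) retwisted by the double left dual $(-)^{LL}$. This twist is exactly the Radford/double-dual discrepancy between using $\C$ and $\C^{\op|L}$ as the dual of the regular bimodule, and transporting it across the relative tensor product gives a canonical equivalence $\C\boxtimes_{\mathcal{B}}\Fun_{\E}({}^{LL}_{\C}\!\M,\M')\simeq\C^{\op|L}\boxtimes_{\mathcal{B}}\Fun_{\E}(\M,\M')$ preserving $\unit_{\C}\boxtimes_{\mathcal{B}}f$; composing with the first equivalence yields the second, again with value $I_{\C}\odot f$.

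The main obstacle I anticipate is the careful bookkeeping of left versus right duals in the \emph{non-pivotal} rigid setting: correctly identifying $\C^{\vee}$ with $\C^{\op|L}$ rather than with an $R$-dual variant, and verifying that the double-left-dual twist defining ${}^{LL}_{\C}\!\M$ compensates this choice \emph{exactly}, so that both relative tensor products return the same $\C$-module functor category. The object-level check that the abstract duality really sends $\unit_{\C}\boxtimes_{\mathcal{B}}f$ to $I_{\C}\odot f$ likewise requires unwinding the internal-hom description of the Eilenberg--Watts functor and matching it with the averaging operation $f\mapsto I_{\C}\odot f$; once these identifications are in place, mutual inverseness follows formally from Prop.\,\ref{prop-3.18} and the duality.
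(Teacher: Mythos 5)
Your proposal is correct and essentially reproduces the paper's proof: the first equivalence is obtained there in exactly the same way, by composing the duality $\C^{\op|L} \boxtimes_{\C^{\rev} \boe \C} \Fun_{\E}(\M,\M') \simeq \Fun_{\C^{\rev}\boe\C}(\C, \Fun_{\E}(\M,\M'))$ with Prop.\,\ref{prop-3.18} and tracing $\unit_{\C}\boxtimes_{\C^{\rev}\boe\C} f \mapsto [-,\unit_{\C}]^R_{\C^{\rev}\boe\C} \odot f \mapsto I_{\C}\odot f$, just as you do via the internal hom $[\unit_{\C},\unit_{\C}]$. For the second equivalence the paper rewrites $\C \boxtimes_{\C^{\rev}\boe\C}(-) \simeq \Fun_{\C^{\rev}\boe\C}(\C^{R|\op}, -)$ using $(\C^{R|\op})^{\op|L}=\C$ and then verifies the double-dual cancellation $f^{\unit_{\C}}(c\odot m)\simeq f^{(c^{RR}\boe\unit_{\C})\otimes^R\unit_{\C}}(m)=f^{c^R}(m)=f^{(\unit_{\C}\boe c)\otimes^R\unit_{\C}}(m)\simeq c\odot f^{\unit_{\C}}(m)$ directly, which is exactly the cancellation your Radford/double-dual twist-transport across $\boxtimes_{\C^{\rev}\boe\C}$ performs, so your packaging is a cosmetic variant of the same argument rather than a different route.
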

\begin{proof}
The left $\C^{\rev} \boe \C$-action on $\Fun_{\E}(\M, \M')$ is given by
$(c_1 \boe c_2) \odot g = c_2 \odot g \odot c_1 = c_2 \odot g(c_1 \odot -)$
for $c_1 \boe c_2 \in \C^{\rev} \boe \C$, $g \in \Fun_{\E}(\M, \M')$.
The composed equivalences 
\[ \C^{\op|L} \boxtimes_{\C^{\rev} \boe \C} \Fun_{\E}(\M, \M') \simeq \Fun_{\C^{\rev} \boe \C}(\C, \Fun_{\E}(\M, \M')) \simeq \Fun_{\C}(\M, \M') \]
carries $\unit_{\C} \boxtimes_{\C^{\rev} \boe \C} f \mapsto [-, \unit_{\C}]^R_{\C^{\rev} \boe \C} \odot f \mapsto I_{\C} \odot f$.
Consider the composed equivalence 
\[ \C \boxtimes_{\C^{\rev} \boe \C} \Fun_{\E}({}^{LL}_{\C}\!\M, \M') \simeq \Fun_{\C^{\rev} \boe \C}(\C^{R|\op}, \Fun_{\E}({}^{LL}_{\C}\!\M, \M')) \simeq \Fun_{\C}(\M, \M') \]
The first equivalence holds since $(\C^{R|\op})^{\op|L} = \C$.
We want to check the second equivalence holds.  The left $\C^{\rev} \boe \C$-module structure on $\C^{R|\op}$ is defined as
\[ (b \boe a) \otimes^R x = x \otimes (b \boe a)^R = x \otimes (b^L \boe a^R) = b^L \otimes x \otimes a^R \]
for $b \boe a \in \C^{\rev} \boe \C$ and $x \in \C^{R|\op}$.
The left $\C^{\rev} \boe \C$-module structure on $\Fun_{\E}({}^{LL}_{\C}\!\M, \M')$ is defined as $(b \boe a) \odot g \coloneqq a \odot g(b \odot^{LL} m) = a \odot g(b^{LL} \odot m)$, for $g \in \Fun_{\E}({}^{LL}_{\C}\!\M, \M')$.
For a left $\C^{\rev} \boe \C$-module functor $f: \C^{R|\op} \to \Fun_{\E}({}^{LL}_{\C}\!\M, \M')$, $\unit_{\C} \mapsto f^{\unit_{\C}}$, check that $f^{\unit_{\C}}$ belongs to $\Fun_{\C}(\M, \M')$.
The left $\C$-module structure on $f^{\unit_{\C}}$ is induced by
$f^{\unit_{\C}}(c \odot m) = (c^{RR} \boe \unit_{\C}) \odot f^{\unit_{\C}}(m) \simeq f^{(c^{RR} \boe \unit_{\C}) \otimes^R \unit_{\C}}(m) = f^{c^R}(m) = f^{(\unit_{\C} \boe c) \otimes^R \unit_{\C}}(m) \simeq (\unit_{\C} \boe c) \odot f^{\unit_{\C}}(m) = c \odot f^{\unit_{\C}}(m)$
for $c \in \C, m \in \M$.
\end{proof}

\begin{rem}
\label{rem-1.30}
If $\N, \N' \in \RMod_{\C}(\cat)$,  we have the equivalence
\[ \C^{\rev} \boxtimes_{\C \boe \C^{\rev}} \Fun_{\E}({}^{LL}_{\C^{\rev}}\!\N, \N') \simeq \Fun_{\C^{\rev}}(\N, \N') \]
which maps $\unit_{\C^{\rev}} \boxtimes_{\C \boe \C^{\rev}} g \mapsto I_{\C^{\rev}} \odot g$, or equivalently,
\[ \Fun_{\E}(\N^{RR}_{\C}, \N') \boxtimes_{\C^{\rev} \boe \C} \C \simeq \Fun_{\C^{\rev}}(\N, \N') \]
which maps $g \boxtimes_{\C^{\rev} \boe \C} \unit_{\C} \mapsto I_{\C^{\rev}} \odot g$.
\end{rem}

By \cite[Lem.\,3.1.3]{Liang} and \cite[Prop.\,4.28]{Wei}, we have the following lemma.
\begin{lem}
\label{Lem-NDP}
Let $\C, \D$ be multifusion categories over $\E$, $\M \in \LMod_{\C}(\cat)$, $\N \in \BMod_{\C|\D}(\cat)$ and $\CP \in \LMod_{\D}(\cat)$. There is an equivalence
\[ \Fun_{\C}(\M, \N) \boxtimes_{\D} \CP \simeq \Fun_{\C}(\M, \N \boxtimes_{\D} \CP), \qquad f \boxtimes_{\D} y \mapsto f(-) \boxtimes_{\D} y. \]
\end{lem}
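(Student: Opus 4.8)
The plan is to write down the comparison functor explicitly, check it descends to the relative tensor product, and then prove it is an equivalence by presenting $\boxtimes_{\D}$ as a coequalizer and commuting the exact functor $\Fun_{\C}(\M,-)$ past it, reducing everything to the Deligne-product case of \cite[Lem.\,3.1.3]{Liang}.

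First I would set up the structures. Since $\N \in \BMod_{\C|\D}(\cat)$, the category $\Fun_{\C}(\M,\N)$ of left $\C$-module functors is a right $\D$-module via postcomposition, $f \odot d \coloneqq (m \mapsto f(m) \odot d)$, using the right $\D$-action on $\N$; and $\CP \in \LMod_{\D}(\cat)$ is a left $\D$-module, so $\Fun_{\C}(\M,\N) \boxtimes_{\D} \CP$ is defined. I would define $\Phi$ on generators by $\Phi(f \boxtimes_{\D} y) = (m \mapsto f(m) \boxtimes_{\D} y)$, where the target carries the left $\C$-module structure inherited from $\N \boxtimes_{\D} \CP$ (whose $\C$-action comes from that of $\N$) together with the module structure of $f$. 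The key well-definedness check is that $\Phi$ respects the $\D$-balancing: on $f \odot d \boxtimes_{\D} y$ one computes $(f(-) \odot d) \boxtimes_{\D} y = f(-) \boxtimes_{\D} (d \odot y)$ by the defining relation of $\boxtimes_{\D}$ in the target, which is precisely $\Phi(f \boxtimes_{\D} (d \odot y))$. Hence $\Phi$ factors through the relative tensor product and lands in $\Fun_{\C}(\M, \N \boxtimes_{\D} \CP)$.

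To prove $\Phi$ is an equivalence I would proceed in two stages. The base case is $\D = \vect$: here \cite[Lem.\,3.1.3]{Liang} gives a natural equivalence $\Fun_{\C}(\M,\N) \boxtimes \CP \simeq \Fun_{\C}(\M, \N \boxtimes \CP)$, where $\boxtimes$ is the Deligne product, because in the finite semisimple setting every $\bk$-linear functor is exact and $\Fun_{\C}(\M,-)$ commutes with $\boxtimes$. For the relative case I would present $\N \boxtimes_{\D} \CP$ as the coequalizer (two-sided bar object) of the two functors
\[
\N \boxtimes \D \boxtimes \CP \rightrightarrows \N \boxtimes \CP
\]
induced by the right $\D$-action on $\N$ and the left $\D$-action on $\CP$ — this finite-semisimple description of $\boxtimes_{\D}$ together with the exactness statements I need is supplied by \cite[Prop.\,4.28]{Wei}. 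Applying the exact functor $\Fun_{\C}(\M,-)$ and using the base case to rewrite each $\N \boxtimes (-)$ as $\Fun_{\C}(\M,\N) \boxtimes (-)$, the two bar maps are carried to the right $\D$-action on $\Fun_{\C}(\M,\N)$ and the left $\D$-action on $\CP$. Thus $\Fun_{\C}(\M, \N \boxtimes_{\D} \CP)$ is identified with the coequalizer of $\Fun_{\C}(\M,\N) \boxtimes \D \boxtimes \CP \rightrightarrows \Fun_{\C}(\M,\N) \boxtimes \CP$, which is exactly $\Fun_{\C}(\M,\N) \boxtimes_{\D} \CP$. Tracking the image of a generator $f \boxtimes_{\D} y$ through this chain of identifications recovers $f(-) \boxtimes_{\D} y$, so the abstract equivalence coincides with the explicit $\Phi$.

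I expect the main obstacle to be precisely the two compatibility points underlying the middle step: (i) that $\Fun_{\C}(\M,-)$ genuinely preserves the coequalizer presenting $\boxtimes_{\D}$, which rests on semisimplicity and on $\M$ being a dualizable $\C$-module so that this internal-hom-type functor is exact and colimit-preserving; and (ii) matching the $\D$-module (balancing) structures on the two sides so that the two bar maps correspond after applying $\Fun_{\C}(\M,-)$ and the base-case equivalence. Once this bookkeeping of module structures is in place, identifying the resulting coequalizer with $\Fun_{\C}(\M,\N) \boxtimes_{\D} \CP$ and recognizing the explicit assignment $f \boxtimes_{\D} y \mapsto f(-) \boxtimes_{\D} y$ is routine. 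The substantive content is therefore entirely the verification that the cited results \cite[Lem.\,3.1.3]{Liang} and \cite[Prop.\,4.28]{Wei} apply in the present $\E$-linear, finite semisimple setting and that the balancing structures are compatible.
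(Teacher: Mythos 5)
Your overall plan is sound, but you should know that the paper contains essentially no argument of the kind you construct: its entire proof is the one-sentence remark preceding the lemma, citing \cite[Lem.\,3.1.3]{Liang} together with \cite[Prop.\,4.28]{Wei}. Moreover, Kong--Zheng's Lem.\,3.1.3 is already the \emph{full relative} statement $\Fun_{\C}(\M,\N)\boxtimes_{\D}\CP\simeq\Fun_{\C}(\M,\N\boxtimes_{\D}\CP)$ for finite module categories --- the present lemma is its verbatim transcription into the setting over $\E$ --- and \cite[Prop.\,4.28]{Wei} supplies the compatibility with the $\E$-module structures, i.e.\ the promotion of the equivalence from $\fcat$ to $\cat$. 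So your two-stage reduction (Deligne-product base case, then a bar-construction argument over $\D$) does not use \cite[Lem.\,3.1.3]{Liang} the way the paper does; it re-proves that lemma from a weaker input. What your route buys is a self-contained argument with the explicit functor and balancing check made visible; what it costs is that you shoulder the real burden yourself, and the $\E$-enrichment --- the only genuinely new point here relative to \cite{Liang}, and the precise role of \cite[Prop.\,4.28]{Wei} in the paper --- is the part you treat most lightly, deferring it to a closing remark. Your guess that Prop.\,4.28 provides a bar presentation of $\boxtimes_{\D}$ with exactness statements is almost certainly not its content.

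There is also one step that is imprecise enough to count as a gap as written. The relative Deligne product $\N\boxtimes_{\D}\CP$ is not the 1-categorical coequalizer of the parallel pair $\N\boxtimes\D\boxtimes\CP\rightrightarrows\N\boxtimes\CP$: the $\D$-balancing is a natural isomorphism subject to coherence, not an equation, so the relevant universal object is a 2-colimit (a codescent object of the truncated bar object). Correspondingly, ``$\Fun_{\C}(\M,-)$ is exact'' is the wrong kind of hypothesis for the preservation you need: exactness is a 1-categorical property and does not by itself imply that this functor commutes with relative Deligne products. The correct repair --- which you do gesture at via dualizability --- is to replace the colimit-preservation claim by the identification $\Fun_{\C}(\M,-)\simeq\M^{\op|L}\boxtimes_{\C}-$, valid in the finite semisimple setting and used by the paper itself (in the proof of Lem.\,\ref{lem-1.30} and in the corollary following this lemma), and then to conclude by associativity of relative tensor products, $\M^{\op|L}\boxtimes_{\C}(\N\boxtimes_{\D}\CP)\simeq(\M^{\op|L}\boxtimes_{\C}\N)\boxtimes_{\D}\CP$, finally tracking that this carries $f(-)\boxtimes_{\D}y$ back to $f\boxtimes_{\D}y$ as you describe. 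With that substitution your argument closes; as written, the coequalizer-plus-exactness step is the one place the proof would not survive scrutiny.
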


\begin{cor}
If $\D$ is replaced by $\D^{\rev}$, then there is an equivalence
\[ \CP \boxtimes_{\D} \Fun_{\C}(\M, \N) \simeq \Fun_{\C}(\M, \CP \boxtimes_{\D} \N) \]
\end{cor}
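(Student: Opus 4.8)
The plan is to deduce this directly from Lemma \ref{Lem-NDP} by substituting $\D^{\rev}$ for $\D$ and then applying the standard ``flip'' between relative tensor products over $\D$ and over $\D^{\rev}$. First I would apply Lemma \ref{Lem-NDP} verbatim, but with the role of $\D$ played by $\D^{\rev}$. Viewing $\N$ as an object of $\BMod_{\C|\D^{\rev}}(\cat)$ and $\CP$ as an object of $\LMod_{\D^{\rev}}(\cat)$ (the latter being the same datum as a right $\D$-module), the lemma yields
\[ \Fun_{\C}(\M, \N) \boxtimes_{\D^{\rev}} \CP \simeq \Fun_{\C}(\M, \N \boxtimes_{\D^{\rev}} \CP), \qquad f \boxtimes_{\D^{\rev}} y \mapsto f(-) \boxtimes_{\D^{\rev}} y. \]

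Next I would invoke the elementary flip: for a right $\D$-module $A$ and a left $\D$-module $B$ there is an equivalence $A \boxtimes_{\D} B \simeq B \boxtimes_{\D^{\rev}} A$, where $B$ is reinterpreted as a right $\D^{\rev}$-module via $b \odot d := d \odot b$ and $A$ as a left $\D^{\rev}$-module via $d \odot a := a \odot d$; both sides corepresent the same balanced bifunctors on $A \times B$, so this is immediate. Applying this to the left-hand side above gives $\Fun_{\C}(\M, \N) \boxtimes_{\D^{\rev}} \CP \simeq \CP \boxtimes_{\D} \Fun_{\C}(\M, \N)$, where $\Fun_{\C}(\M, \N)$ carries the left $\D$-module structure $d \odot f = f(-) \odot d$ inherited from the right $\D^{\rev}$-action on $\N$; applying the same flip inside the functor category on the right gives $\N \boxtimes_{\D^{\rev}} \CP \simeq \CP \boxtimes_{\D} \N$. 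Composing the three equivalences produces
\[ \CP \boxtimes_{\D} \Fun_{\C}(\M, \N) \simeq \Fun_{\C}(\M, \N) \boxtimes_{\D^{\rev}} \CP \simeq \Fun_{\C}(\M, \N \boxtimes_{\D^{\rev}} \CP) \simeq \Fun_{\C}(\M, \CP \boxtimes_{\D} \N), \]
which is the asserted equivalence; tracing the lemma's explicit formula through the two flips shows it sends $y \boxtimes_{\D} f \mapsto y \boxtimes_{\D} f(-)$.

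The only delicate point, and where the bookkeeping lives, is matching the left/right $\D$-module conventions: I must check that the left $\D$-module structure on $\Fun_{\C}(\M, \N)$ obtained by flipping its right $\D^{\rev}$-action is exactly the one used to form $\CP \boxtimes_{\D} \Fun_{\C}(\M, \N)$, and that the flip applied to $\N$ produces precisely the right $\D$-action making $\CP \boxtimes_{\D} \N$ meaningful. Since the flip $A \boxtimes_{\D} B \simeq B \boxtimes_{\D^{\rev}} A$ is natural in both arguments and $\Fun_{\C}(\M, -)$ is $\C$-linear and untouched by the $\D$-side reindexing, commuting the flip past $\Fun_{\C}(\M, -)$ requires no new coherence beyond this matching of conventions; no genuinely new estimate or diagram chase is needed.
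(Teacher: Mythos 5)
Your proposal is correct and follows essentially the same route as the paper: apply Lemma \ref{Lem-NDP} with $\D^{\rev}$ in place of $\D$, then identify $\Fun_{\C}(\M,\N)\boxtimes_{\D^{\rev}}\CP$ with $\CP\boxtimes_{\D}\Fun_{\C}(\M,\N)$ and $\N\boxtimes_{\D^{\rev}}\CP$ with $\CP\boxtimes_{\D}\N$ via the standard $\D$/$\D^{\rev}$ flip of module structures. The only cosmetic difference is that the paper justifies commuting the flip past $\Fun_{\C}(\M,-)$ by rewriting it as $\M^{\op|L}\boxtimes_{\C}(-)$, whereas you argue directly by naturality of the flip; both are sound.
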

\begin{proof}
The left $\D$-action on $\Fun_{\C}(\M, \N)$ is induced by the right $\D^{\rev}$-action on $\N$.
Then we have
$\Fun_{\C}(\M, \N) \boxtimes_{\D^{\rev}} \CP = \CP \boxtimes_{\D} \Fun_{\C}(\M, \N)$.
The left $\C$-action on $\CP \boxtimes_{\D} \N$ is induced by the left $\C$-action on $\N$.
Then we have
$\Fun_{\C}(\M, \N \boxtimes_{\D^{\rev}} \CP) \simeq \M^{\op|L} \boxtimes_{\C} (\N \boxtimes_{\D^{\rev}} \CP) \simeq \M^{\op|L} \boxtimes_{\C} (\CP \boxtimes_{\D} \N)$.
\end{proof}

\begin{prop}
Let $\A, \B, \C, \D$ be multifusion categories over $\E$ and $\M \in \BMod_{\A|\C}(\cat)$, $\M' \in \BMod_{\A|\D}(\cat)$, $\N \in \BMod_{\C|\B}(\cat)$ and $\N' \in \BMod_{\D|\B}(\cat)$.
There is an equivalence
\begin{equation}
\label{eq-1}
\Fun_{\A}(\M, \M') \boxtimes_{\C^{\rev} \boe \D} \Fun_{\B^{\rev}}({}^{LL}_{\C}\!\N, \N') \simeq \Fun_{\A \boe \B^{\rev}}(\M \boxtimes_{\C} \N, \M' \boxtimes_{\D} \N')
\end{equation}
defined by $f \boxtimes_{\C^{\rev} \boe \D} g \mapsto I_{\C} \odot (f(-) \boxtimes_{\D} g(-))$. 
\end{prop}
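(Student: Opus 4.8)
The plan is to rewrite both sides through the internal-hom presentation $\Fun_{\A}(\M,\M')\simeq\M^{\op|L}\boxtimes_{\A}\M'$ (the presentation already used in the proof of the corollary to Lem.\,\ref{Lem-NDP}) and then to reorganize the resulting iterated relative tensor products using the symmetric monoidal structure of $\cat$, finally reading off the explicit generator. First I would rewrite the right-hand side. Applying the internal-hom presentation to the $\A\boe\B^{\rev}$-module functor category gives
\[ \Fun_{\A\boe\B^{\rev}}(\M\boxtimes_{\C}\N,\M'\boxtimes_{\D}\N')\simeq(\M\boxtimes_{\C}\N)^{\op|L}\boxtimes_{\A\boe\B^{\rev}}(\M'\boxtimes_{\D}\N'). \]
Since $\M$ is an $\A$-$\C$ bimodule and $\N$ is a $\C$-$\B$ bimodule, duality in the symmetric monoidal $2$-category $\cat$ reverses the order of the factors and dualizes each, so that
\[ (\M\boxtimes_{\C}\N)^{\op|L}\simeq\N^{\op|L}\boxtimes_{\C}\M^{\op|L} \]
as $\B$-$\A$ bimodules, compatibly with the residual $\C$- and $\D$-actions. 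Substituting and using associativity of $\boxtimes$ turns the right-hand side into an iterated relative tensor product $\N^{\op|L}\boxtimes_{\C}\M^{\op|L}\boxtimes_{\A\boe\B^{\rev}}\M'\boxtimes_{\D}\N'$.

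Next I would regroup. Using commutativity and associativity of $\boxtimes$ in $\cat$, the contraction $\boxtimes_{\A\boe\B^{\rev}}$ separates into a $\boxtimes_{\A}$ pairing $\M^{\op|L}$ with $\M'$ and a $\boxtimes_{\B^{\rev}}$ pairing $\N^{\op|L}$ with $\N'$, while the surviving $\boxtimes_{\C}$ together with the $\boxtimes_{\D}$ inside $\M'\boxtimes_{\D}\N'$ merge into a single $\boxtimes_{\C^{\rev}\boe\D}$. This produces
\[ (\M^{\op|L}\boxtimes_{\A}\M')\boxtimes_{\C^{\rev}\boe\D}(({}^{LL}_{\C}\!\N)^{\op|L}\boxtimes_{\B^{\rev}}\N'), \]
where the $LL$-twist on $\N$ is forced: the residual left $\C$-action on $\N^{\op|L}$ can only be read as a module structure for the gluing algebra $\C^{\rev}\boe\D$ after the twist recorded in Rem.\,\ref{rem-1.30}. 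Applying the internal-hom presentation once more to each factor, via Lem.\,\ref{lem-1.30} and Rem.\,\ref{rem-1.30}, identifies them with $\Fun_{\A}(\M,\M')$ and $\Fun_{\B^{\rev}}({}^{LL}_{\C}\!\N,\N')$, which is the left-hand side. Finally I would track a generator: Lem.\,\ref{lem-1.30} sends $\unit\boxtimes f\mapsto I_{\C}\odot f$, and unwinding the regrouping shows that $f\boxtimes_{\C^{\rev}\boe\D}g$ is carried to $I_{\C}\odot(f(-)\boxtimes_{\D}g(-))$, exactly as in the statement.

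The hard part will be the bookkeeping in the two middle steps. I expect the main obstacle to be establishing the duality formula $(\M\boxtimes_{\C}\N)^{\op|L}\simeq\N^{\op|L}\boxtimes_{\C}\M^{\op|L}$ as an equivalence that is simultaneously $\E$-linear—so that it genuinely lives in $\cat$—and compatible with every residual one-sided action, and then verifying that the regrouping yields precisely the $LL$-twist and the gluing over $\C^{\rev}\boe\D$ rather than some other twist or contraction. Keeping the $\op|L$ and $LL$ dualities consistent throughout, so that the final factor is ${}^{LL}_{\C}\!\N$ and not $\N$ or $\N^{RR}_{\C}$, is the delicate point; once the module structures are aligned, each individual step is an instance of Lem.\,\ref{lem-1.30}, Rem.\,\ref{rem-1.30}, Lem.\,\ref{Lem-NDP} and its corollary.
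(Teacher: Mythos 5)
Your proposal is correct in outline, but it reaches the equivalence by a genuinely different route than the paper. The paper first uses Lem.\,\ref{lem-1.30} to reduce to the special case $\A = \E = \B$, and then composes five equivalences entirely in functor-category language: insert $\C \boxtimes_{\C \boe \C^{\rev}} (-)$ to split the $\boxtimes_{\C^{\rev} \boe \D}$ contraction, pull $\Fun_{\E}(\M,\M') \boxtimes_{\D} (-)$ inside a functor category via the corollary to Lem.\,\ref{Lem-NDP}, convert $\C \boxtimes_{\C^{\rev} \boe \C} \Fun_{\E}({}^{LL}_{\C}\!\N, -)$ into $\Fun_{\C}(\N, -)$ by Lem.\,\ref{lem-1.30}, apply Lem.\,\ref{Lem-NDP} again, and finish with the tensor--hom adjunction $\Fun_{\C}(\N, \Fun_{\E}(\M, -)) \simeq \Fun_{\E}(\M \boxtimes_{\C} \N, -)$; at no point does it invoke a duality formula for a relative tensor product. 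You instead dualize globally, presenting both sides as iterated relative tensor products and using $(\M \boxtimes_{\C} \N)^{\op|L} \simeq \N^{\op|L} \boxtimes_{\C} \M^{\op|L}$ before regrouping. What your approach buys is that it handles general $\A$ and $\B$ in one pass, with no preliminary reduction, and makes the cyclic-contraction structure of the statement transparent; your generator tracking also lands on $I_{\C} \odot (f(-) \boxtimes_{\D} g(-))$, matching the paper. What it costs is exactly the step you flagged: the duality formula, as an equivalence in $\cat$ compatible with the $\E$-module data of Def.\,\ref{defn-CD-bimodule} and with all four residual one-sided actions (including the $LL$-twist bookkeeping), is not among the paper's established lemmas, and proving it honestly --- say by exhibiting $\E$-linear evaluation and coevaluation witnessing $\M^{\op|L}$ as the dual bimodule and composing duals contravariantly --- is roughly the same amount of work that the paper distributes across Lem.\,\ref{lem-1.30}, Rem.\,\ref{rem-1.30} and Lem.\,\ref{Lem-NDP}. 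Be careful, too, that deriving the duality formula \emph{from} internal-hom presentations of the form $\Fun_{\A \boe \B^{\rev}}(\M \boxtimes_{\C} \N, -) \simeq (\M \boxtimes_{\C} \N)^{\op|L} \boxtimes_{\A \boe \B^{\rev}} (-)$ risks circularity, since identifying that dual with $\N^{\op|L} \boxtimes_{\C} \M^{\op|L}$ together with all its actions is essentially the proposition itself; the direct evaluation/coevaluation argument avoids this. With that step supplied, your regrouping and the forced ${}^{LL}_{\C}$-twist (consistent with Rem.\,\ref{rem-1.30}) go through as you describe.
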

\begin{proof}
Using the Lem.\,\ref{lem-1.30}, we reduce the equivalence (\ref{eq-1}) for special case $\A = \E = \B$.
By Lem.\,\ref{lem-1.30} and Lem.\,\ref{Lem-NDP}, the composed equivalence
\begin{align*}
& \Fun_{\E}(\M, \M') \boxtimes_{\C^{\rev} \boe \D} \Fun_{\E}({}^{LL}_{\C}\!\N, \N')\\
& \simeq \C \boxtimes_{\C \boe \C^{\rev}} (\Fun_{\E}(\M, \M') \boxtimes_{\D} \Fun_{\E}({}^{LL}_{\C}\!\N, \N') ) \\
& \simeq \C \boxtimes_{\C^{\rev} \boe \C} \Fun_{\E}({}^{LL}_{\C}\!\N, \Fun_{\E}(\M, \M') \boxtimes_{\D} \N') \\
& \simeq \Fun_{\C}(\N, \Fun_{\E}(\M, \M') \boxtimes_{\D} \N') \\
& \simeq \Fun_{\C}(\N, \Fun_{\E}(\M, \M' \boxtimes_{\D} \N')) \\
& \simeq \Fun_{\E}(\M \boxtimes_{\C} \N, \M' \boxtimes_{\D} \N')
\end{align*}
carries $f \boxtimes_{\C^{\rev} \boe \D} g \mapsto \unit_{\C} \boxtimes_{\C \boe \C^{\rev}} (f \boxtimes_{\D} g) \mapsto \unit_{\C} \boxtimes_{\C^{\rev} \boe \C} (f \boxtimes_{\D} g(-)) \mapsto I_{\C} \odot (f \boxtimes_{\D} g(-)) \mapsto I_{\C} \odot (f(-) \boxtimes_{\D} g(-)) \mapsto I_{\C} \odot (f(-) \boxtimes_{\D} g(-))$.
\end{proof}

\begin{rem}
The equivalence (\ref{eq-1}) induces an equivalence
\begin{equation}
\label{eq-2}
\Fun_{\A}(\M^{RR}_{\C}, \M') \boxtimes_{\C^{\rev} \boe \D} \Fun_{\B^{\rev}}(\N, \N') \simeq \Fun_{\A \boe \B^{\rev}}(\M \boxtimes_{\C} \N, \M' \boxtimes_{\D} \N')
\end{equation}
defined by $f \boxtimes_{\C^{\rev} \boe \D} g \mapsto I_{\C^{\rev}} \odot (f(-) \boxtimes_{\D} g(-))$.
\end{rem}

Theorem 3.1.7 in \cite{Liang} tells us the following fact. Let $\C, \D, \CP$ be multitensor categories and $\M, \M'$ be finite $\C$-$\D$ bimodules and $\N, \N'$ be finite $\D$-$\CP$ bimodules. Assume $\D$ is indecomposable. The assignment $f \boxtimes_{Z(\D)} g \mapsto f \boxtimes_{\D} g$ determines an equivalence of $Z(\C)$-$Z(\CP)$ bimodules
\begin{equation}
\label{eq-KZ}
 \Fun_{\C \boxtimes \D^{\rev}}(\M, \M') \boxtimes_{Z(\D)} \Fun_{\D \boxtimes \CP^{\rev}}(\N, \N') \simeq \Fun_{\C \boxtimes \CP^{\rev}}(\M \boxtimes_{\D} \N, \M' \boxtimes_{\D} \N') 
 \end{equation}
Moreover, when $\M = \M'$ and $\N = \N'$, (\ref{eq-KZ}) is an equivalence of monoidal $Z(\C)$-$Z(\CP)$ bimodules.
The statement and the proof idea of Thm.\,\ref{thm-Fun-braided-bim} comes from \cite[Thm.\,3.1.7]{Liang}.

\begin{thm}
\label{thm-Fun-braided-bim}
Let $\C, \D, \CP$ be fully faithful multifusion categories over $\E$.
Let $\M$, $\M' \in \BMod_{\C|\D}(\cat)$, $\N$, $\N' \in \BMod_{\D|\CP}(\cat)$. There is an equivalence of $Z(\C, \E)$-$Z(\CP, \E)$ bimodules
\begin{equation}
\label{eq-5555}
\Fun_{\C \boe \D^{\rev}}(\M, \M') \boxtimes_{Z(\D, \E)} \Fun_{\D \boe \CP^{\rev}}(\N, \N') \simeq \Fun_{\C \boe \CP^{\rev}}(\M \boxtimes_{\D} \N, \M' \boxtimes_{\D} \N')
\end{equation}
Moreover, when $\M = \M'$, $\N = \N'$, (\ref{eq-5555}) is an equivalence in $\BMod_{Z(\C, \E)|Z(\CP, \E)}(\Alg(\cat))$.
\end{thm}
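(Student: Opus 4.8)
The plan is to deduce the $\E$-enriched equivalence (\ref{eq-5555}) from the unenriched equivalence (\ref{eq-KZ}) of \cite[Thm.\,3.1.7]{Liang} by applying the centralization operation $\E \boxdot_{\E} (-)$ and identifying the resulting pieces. Throughout, each plain bimodule functor category $\Fun_{\C|\D}(\M, \M')$ is regarded as a braided $\E$-module through the central functor $\E \to Z(-)$, so that (\ref{eq-KZ}) becomes an equivalence of braided $\E$-modules, and, when $\M = \M'$, $\N = \N'$, of algebras therein. Two identifications are needed. First, the equivalence $\E \boxdot_{\E} \Fun_{\C|\D}(\M, \N) \simeq \Fun_{\C \boe \D^{\rev}}(\M, \N)$ recorded above (after \cite[Rem.\,3.6.17]{SL}), applied to each of the three functor categories in (\ref{eq-KZ}). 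Second, the fact that the relative Deligne product $\M \boxtimes_{\D} \N$ over the middle category has the same underlying category whether computed in $\fcat$ or in $\cat$, its $\E$-module structure being induced from the $\D$-action since $\E \to \D$ is central; hence $\M \boxtimes_{\D} \N \in \BMod_{\C|\CP}(\cat)$. Applying $\E \boxdot_{\E}(-)$ to the right-hand side of (\ref{eq-KZ}) and invoking the first identification then produces $\Fun_{\C \boe \CP^{\rev}}(\M \boxtimes_{\D} \N, \M' \boxtimes_{\D} \N')$, the right-hand side of (\ref{eq-5555}).

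For the left-hand side the crux is to commute $\E \boxdot_{\E}(-)$ past the relative tensor over the center. Concretely, I would prove that for a right $Z(\D)$-module $X$ and a left $Z(\D)$-module $Y$ on which $\E$ acts transparently through the central functor $\E \to Z(\D)$, there is an equivalence
\[ \E \boxdot_{\E} \big( X \boxtimes_{Z(\D)} Y \big) \simeq (\E \boxdot_{\E} X) \boxtimes_{Z(\D, \E)} (\E \boxdot_{\E} Y) \]
(here $Z(\D, \E) = \E \boxdot_{\E} Z(\D) = \E'|_{Z(\D)}$ by Expl.\,\ref{expl-bE-mo}). Taking $X = \Fun_{\C|\D}(\M, \M')$ and $Y = \Fun_{\D|\CP}(\N, \N')$ then carries the left-hand side of (\ref{eq-KZ}) to the left-hand side of (\ref{eq-5555}). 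I would establish the displayed equivalence by comparing universal properties: a $Z(\D, \E)$-balanced functor out of $(\E \boxdot_{\E} X) \times (\E \boxdot_{\E} Y)$ should coincide with an $\E$-transparent $Z(\D)$-balanced functor out of $X \times Y$, precisely because $Z(\D, \E)$ is the $\E$-transparent part of $Z(\D)$; Prop.\,\ref{prop-3.7.1}, which realizes $\boxdot_{\E}$ via modules over $L_{\E}$, and the structural equivalences collected in Rem.\,\ref{rem-sun-prop-bE} supply the bookkeeping. An alternative route, internal to this section, is to first strip the outer legs $\C$ and $\CP$ down to $\E$ using the two equivalences of Lem.\,\ref{lem-1.30} (exactly as in the proof of (\ref{eq-1})), reducing to the core case $\C = \CP = \E$, and then to reconstruct that core from Lem.\,\ref{Lem-NDP} together with (\ref{eq-1})--(\ref{eq-2}) after recognizing $Z(\D, \E) \simeq \Fun_{\D \boe \D^{\rev}}(\D, \D)$.

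Finally, for the \textbf{moreover} clause I must upgrade the plain equivalence to one of $Z(\C, \E)$-$Z(\CP, \E)$ bimodules and, when $\M = \M'$, $\N = \N'$, to a 1-equivalence in $\BMod_{Z(\C, \E)|Z(\CP, \E)}(\Alg(\cat))$. The relevant $Z(\C, \E)$-$Z(\CP, \E)$ bimodule and braided-monoidal structures are precisely those carried by the three functor categories via Expl.\,\ref{expl-braided-bimodule}, and they arise from the $Z(\C)$-$Z(\CP)$ bimodule (and monoidal) structures of (\ref{eq-KZ}) by centralization; since $\E \boxdot_{\E}(-)$ is symmetric monoidal (Rem.\,\ref{rem-sun-prop-bE}) and (\ref{eq-KZ}) is an equivalence of monoidal bimodules, naturality propagates these structures across the equivalence. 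It then remains to verify that the equivalence intertwines the braided monoidal functors $\phi$ from $\overline{Z(\C, \E)} \boe Z(\CP, \E)$, i.e.\ that it respects the central half-braidings of Expl.\,\ref{expl-braided-bimodule}. I expect this last coherence---matching the central structure surviving centralization with the one produced by (\ref{eq-KZ})---together with the center-compatibility equivalence of the previous paragraph to be the main obstacle; the underlying equivalence of categories is comparatively formal once the two identifications are in place.
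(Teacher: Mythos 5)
Your primary route --- obtaining (\ref{eq-5555}) by applying the centralization $\E \boxdot_{\E}(-)$ to Kong--Zheng's unenriched equivalence (\ref{eq-KZ}) --- has a genuine gap at exactly the step you flag as the crux: the claimed commutation
\[
\E \boxdot_{\E}\big(X \boxtimes_{Z(\D)} Y\big) \simeq (\E \boxdot_{\E} X) \boxtimes_{Z(\D, \E)} (\E \boxdot_{\E} Y)
\]
is not bookkeeping but is essentially the whole content of the theorem, and the tools you cite do not reach it. Prop.\,\ref{prop-3.7.1} and Rem.\,\ref{rem-sun-prop-bE} concern the operation $\boxdot_{\E}$ on braided $\E$-modules, i.e.\ a full subcategory of $\boxtimes_{\E}$ cut out by triviality of the $\E$-module braiding; they say nothing about the balanced Deligne product over the fusion category $Z(\D)$ versus the one over $Z(\D, \E)$, which is a different colimit entirely. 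Your universal-property sketch also does not close: there is a canonical $Z(\D, \E)$-balanced pairing $(\E \boxdot_{\E} X) \times (\E \boxdot_{\E} Y) \to \E \boxdot_{\E}(X \boxtimes_{Z(\D)} Y)$, hence a comparison functor, but a balanced functor out of the transparent subcategories carries strictly less data than an ``$\E$-transparent'' $Z(\D)$-balanced functor out of $X \times Y$ (the domains differ); to identify the two functor categories you would need to show that $X$ and $Y$ are generated from their transparent parts under the $Z(\D)$-action, or directly that the comparison functor is an equivalence --- and that is precisely what has to be proved. The identification $\E \boxdot_{\E} \Fun_{\C|\D}(\M, \M') \simeq \Fun_{\C \boe \D^{\rev}}(\M, \M')$ you invoke is indeed available (the paper records it after Def.\,\ref{defn-CD-bimodule}, citing \cite{SL}), but it supplies the vertices of your square, not the commutation itself.

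The paper instead runs the Kong--Zheng argument internally to $\cat$ rather than centralizing its output: by Lem.\,\ref{lem-1.30} and Rem.\,\ref{rem-1.30} it rewrites the left-hand side of (\ref{eq-5555}) as $\Fun_{\C}(\M^{RR}_{\D}, \M') \boxtimes_{\D^{\rev} \boe \D} \D \boxtimes_{Z(\D, \E)} \D \boxtimes_{\D^{\rev} \boe \D} \Fun_{\CP^{\rev}}({}^{LL}_{\D}\!\N, \N')$, collapses the middle via $\D \boxtimes_{Z(\D, \E)} \D \simeq \Fun_{\E}(\D, \D)$ from \cite[Lem.\,5.23]{Wei}, and reassembles with (\ref{eq-2}) and (\ref{eq-1}); the resulting map is $f \boxtimes_{Z(\D, \E)} g \mapsto f \boxtimes_{\D} g$, against which the ``moreover'' clause is checked through diagram (\ref{braided-functor-E}). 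Your one-sentence fallback (strip legs via Lem.\,\ref{lem-1.30}, reassemble via Lem.\,\ref{Lem-NDP} and (\ref{eq-1})--(\ref{eq-2})) is this proof in outline, but note it is not ``internal to this section'': the indispensable engine is the middle-collapse $\D \boxtimes_{Z(\D, \E)} \D \simeq \Fun_{\E}(\D, \D)$ --- the enriched substitute for Kong--Zheng's $\D \boxtimes_{Z(\D)} \D \simeq \Fun(\D, \D)$ --- which does not follow formally from $Z(\D, \E) \simeq \Fun_{\D \boe \D^{\rev}}(\D, \D)$ and must be imported from \cite{Wei} or reproved. Were you to establish your commutation lemma for the specific functor categories at hand, the centralization route would give an attractive derivation of the bare equivalence from \cite[Thm.\,3.1.7]{Liang}; as written, the crucial step is asserted rather than established, and both the bimodule structure and the monoidal clause remain deferred.
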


\begin{proof}
We have the following composed equivalences:
\begin{align*}
& \Fun_{\C \boe \D^{\rev}}(\M, \M') \boxtimes_{Z(\D, \E)} \Fun_{\D \boe \CP^{\rev}}(\N, \N')  \\
& \simeq \Fun_{\C}(\M^{RR}_{\D}, \M') \boxtimes_{\D^{\rev} \boe \D} \D \boxtimes_{Z(\D, \E)} \D \boxtimes_{\D^{\rev} \boe \D} \Fun_{\CP^{\rev}}({}^{LL}_{\D}\!\N, \N') \\
& \simeq \Fun_{\C}(\M^{RR}_{\D}, \M') \boxtimes_{\D^{\rev} \boe \D} \Fun_{\E}(\D, \D) \boxtimes_{\D^{\rev} \boe \D} \Fun_{\CP^{\rev}}({}^{LL}_{\D}\!\N, \N') \\
& \simeq \Fun_{\C}(\M \boxtimes_{\D} \D, \M' \boxtimes_{\D} \D) \boxtimes_{\D^{\rev} \boe \D} \Fun_{\CP^{\rev}}({}^{LL}_{\D}\!\N, \N')  \\
& \simeq \Fun_{\C \boe \CP^{\rev}}(\M \boxtimes_{\D} \N, \M' \boxtimes_{\D} \N')
\end{align*}
The first step holds by Lem.\,\ref{lem-1.30} and Rem.\,\ref{rem-1.30}.
The second step holds by the equivalence $\C \boxtimes_{Z(\C, \E)} \C \simeq \Fun_{\E}(\C, \C)$ in \cite[Lem.\,5.23]{Wei}.
The third step and the last step hold by the equivalences (\ref{eq-2}) and (\ref{eq-1}).
This composed equivalence maps $f \boxtimes_{Z(\D, \E)} g$ to $f \boxtimes_{\D} g$. 
When $\M = \M'$ and $\N = \N'$, the formula $f \boxtimes_{Z(\D, \E)} g \mapsto f \boxtimes_{\D} g$ defines a monoidal equivalence. Check the diagram (\ref{braided-functor-E}) commutes.
\end{proof}

\begin{cor}
\label{cor-cylinder-c}
Taking $\C = \E, \M = \M' = \D$, $\CP = \E, \N = \N'$, we have the equivalence
\[ \D \boxtimes_{Z(\D, \E)} \Fun_{\D}(\N, \N) \simeq \Fun_{\E}(\N, \N) \]
\end{cor}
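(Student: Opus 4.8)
The plan is to obtain this corollary purely as a specialization of Theorem \ref{thm-Fun-braided-bim}, followed by simplifying each of the four functor categories appearing in (\ref{eq-5555}) using the unit axioms of $\boe$ and $\boxtimes_{\D}$ together with Prop.\,\ref{prop-C-3.16}. Concretely, I would set $\C = \CP = \E$, $\M = \M' = \D$ and $\N = \N'$ in the equivalence (\ref{eq-5555}). Before substituting, I would record that $\D$ is an object of $\BMod_{\E|\D}(\cat) = \RMod_{\D}(\cat)$ via its regular right action on itself together with the canonical $\E$-module structure coming from $T_{\D}$, while $\N = \N'$ lives in $\BMod_{\D|\E}(\cat) = \LMod_{\D}(\cat)$; these are exactly the data required to invoke the theorem.

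Next I would rewrite the two tensor factors on the left-hand side. For the first factor, since $\E$ is the unit for $\boe$ we have $\E \boe \D^{\rev} \simeq \D^{\rev}$, so $\Fun_{\E \boe \D^{\rev}}(\D, \D) \simeq \Fun_{\D^{\rev}}(\D, \D)$, and by the first equivalence of Prop.\,\ref{prop-C-3.16} (taken with $\C = \D$) this is equivalent to $\D$ as a multifusion category over $\E$. For the second factor, $\D \boe \E^{\rev} \simeq \D$ gives $\Fun_{\D \boe \E^{\rev}}(\N, \N) = \Fun_{\D}(\N, \N)$. Under these identifications the left-hand side of (\ref{eq-5555}) becomes $\D \boxtimes_{Z(\D, \E)} \Fun_{\D}(\N, \N)$, where the $Z(\D, \E)$-balancing is the one supplied by the theorem: the first factor carries a $Z(\E, \E)$-$Z(\D, \E)$ bimodule structure and the second a $Z(\D, \E)$-$Z(\E, \E)$ bimodule structure, so the relative product over $Z(\D, \E)$ is well defined.

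On the right-hand side I would use the unit property of the relative tensor product over $\D$, namely $\D \boxtimes_{\D} \N \simeq \N$, together with $\E \boe \E^{\rev} \simeq \E$, so that $\Fun_{\E \boe \E^{\rev}}(\D \boxtimes_{\D} \N, \D \boxtimes_{\D} \N) \simeq \Fun_{\E}(\N, \N)$. Combining these simplifications with (\ref{eq-5555}) yields the desired equivalence $\D \boxtimes_{Z(\D, \E)} \Fun_{\D}(\N, \N) \simeq \Fun_{\E}(\N, \N)$.

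Since the only genuinely nontrivial input is Theorem \ref{thm-Fun-braided-bim} itself, I do not anticipate a substantial obstacle. The one point deserving care is verifying that the chain of unit equivalences, in particular the identification $\Fun_{\E \boe \D^{\rev}}(\D, \D) \simeq \D$ from Prop.\,\ref{prop-C-3.16} and the coherence isomorphism $\D \boxtimes_{\D} \N \simeq \N$, is compatible with the $Z(\D, \E)$-module structures, so that the resulting equivalence holds at the claimed level rather than merely between underlying categories.
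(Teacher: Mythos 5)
Your proposal is correct and is essentially the paper's own argument: the corollary is stated in the paper precisely as the specialization $\C = \CP = \E$, $\M = \M' = \D$, $\N = \N'$ of Theorem \ref{thm-Fun-braided-bim}, with the identifications $\Fun_{\E \boe \D^{\rev}}(\D, \D) \simeq \Fun_{\D^{\rev}}(\D, \D) \simeq \D$ (Prop.\,\ref{prop-C-3.16}) and $\D \boxtimes_{\D} \N \simeq \N$ left implicit. Your closing remark about checking that these unit equivalences respect the $Z(\D, \E)$-module structures simply makes explicit a point the paper takes for granted.
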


\begin{cor}
Taking $\D = \E$, we have $Z(\E, \E) = \E$ and the equivalence
\begin{equation}
\label{eq-m-pro}
 \Fun_{\C}(\M, \M') \boe \Fun_{\CP^{\rev}}(\N, \N') \simeq \Fun_{\C \boe \CP^{\rev}}(\M \boe \N, \M' \boe \N') 
 \end{equation}
\end{cor}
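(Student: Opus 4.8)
The plan is to obtain the stated equivalence (\ref{eq-m-pro}) as the special case $\D = \E$ of Theorem \ref{thm-Fun-braided-bim}, so that the only genuine computation is the identification of $Z(\E, \E)$; once that is in hand, every term of (\ref{eq-5555}) degenerates via the unit and symmetry laws of $(\cat, \boe)$.

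First I would establish $Z(\E, \E) = \E$. Viewing $\E$ as a fully faithful multifusion category over itself through the canonical central functor $\iota_0 \colon \E \to Z(\E)$, Expl.\,\ref{expl-bE-mo} gives $Z(\E, \E) = \E'|_{Z(\E)} = \E \boxdot_{\E} Z(\E)$. Since $(Z(\E), \iota_0)$ is a modular extension of $\E$, the defining condition of a modular extension forces $\E \boxdot_{\E} Z(\E) = \E$; equivalently this is the unit equivalence $Z(\E) \boxdot_{\E} \C \simeq \C$ of Rem.\,\ref{rem-sun-prop-bE} specialized to $\C = \E$, together with the commutativity of $\boxdot_{\E}$. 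In particular the relative tensor product $\boxtimes_{Z(\D, \E)}$ occurring in (\ref{eq-5555}) becomes $\boxtimes_{\E} = \boe$ when $\D = \E$.

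Next I would unpack the remaining degenerations. As $\E$ is symmetric, $\E^{\rev} = \E$, and as $\E$ is the unit of $(\cat, \boe)$ we have $\C \boe \E^{\rev} \simeq \C$ and $\E \boe \CP^{\rev} \simeq \CP^{\rev}$; hence $\BMod_{\C|\E}(\cat) = \LMod_{\C}(\cat)$ and $\BMod_{\E|\CP}(\cat) = \RMod_{\CP}(\cat)$, and the two source hom-categories in (\ref{eq-5555}) reduce to $\Fun_{\C \boe \E^{\rev}}(\M, \M') = \Fun_{\C}(\M, \M')$ and $\Fun_{\E \boe \CP^{\rev}}(\N, \N') = \Fun_{\CP^{\rev}}(\N, \N')$. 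On the target side the relative tensor product over $\D = \E$ is the monoidal product of $\cat$, so $\M \boxtimes_{\E} \N = \M \boe \N$ and $\M' \boxtimes_{\E} \N' = \M' \boe \N'$, whence the right-hand side becomes $\Fun_{\C \boe \CP^{\rev}}(\M \boe \N, \M' \boe \N')$. Substituting these identifications into (\ref{eq-5555}) yields precisely (\ref{eq-m-pro}). I expect the only subtle point to be the first step: verifying $Z(\E, \E) = \E$, which is exactly what ensures that $\boxtimes_{Z(\E, \E)}$ really is the symmetric monoidal product $\boe$ rather than a genuinely relative tensor product; the rest is a formal reading-off of the $\D = \E$ case.
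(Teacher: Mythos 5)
Your proposal is correct and follows exactly the route the paper intends for this corollary: specialize Theorem \ref{thm-Fun-braided-bim} to $\D = \E$, with the only substantive point being the identification $Z(\E, \E) = \E$, which you justify correctly from within the paper via Expl.\,\ref{expl-bE-mo} and the fact that $(Z(\E), \iota_0)$ is a modular extension of $\E$ (equivalently, via Rem.\,\ref{rem-sun-prop-bE} and commutativity of $\boxdot_{\E}$). The remaining degenerations ($\E^{\rev} \simeq \E$, $\C \boe \E^{\rev} \simeq \C$, $\boxtimes_{Z(\E,\E)} = \boe$, $\M \boxtimes_{\E} \N = \M \boe \N$) are exactly the formal reading-off the paper leaves implicit, so no gap remains.
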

\begin{cor}
\label{cor-Fun-braided-bim}
Let $\C_1, \C_2, \D_1, \D_2$ be fully faithful multifusion categories over $\E$.
Let $\M, \M' \in \LMod_{\C_1 \boe \D_1^{\rev}}(\cat)$ and $\N, \N' \in \LMod_{\C_2 \boe \D_2^{\rev}}(\cat)$.
Taking $\C = \C_1 \boe \D_1^{\rev}$ and $\CP^{\rev} = \C_2 \boe \D_2^{\rev}$ in (\ref{eq-m-pro}),  we have the equivalence
\[ \Fun_{\C_1 \boe \D_1^{\rev}}(\M, \M') \boe \Fun_{\C_2 \boe \D_2^{\rev}}(\N, \N') \simeq \Fun_{\C_1 \boe \C_2 \boe \D_1^{\rev} \boe \D_2^{\rev}}(\M \boe \N, \M' \boe \N')  \]
Taking $\C_1 = \D_1 = \M = \M' = \C$ and $\C_2 = \D_2 = \N = \N' = \D$ in the above equivalence, we have the braided monoidal equivalence:
\[ Z(\C, \E) \boe Z(\D, \E) \simeq Z(\C \boe \D, \E) \]
by the equivalence $\Fun_{\C \boe \C^{\rev}}(\C, \C) \simeq Z(\C, \E)$ (see \cite[Thm.\,4.18]{Wei}).
\end{cor}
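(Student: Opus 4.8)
The plan is to obtain both equivalences by substitution into the already-established equivalence (\ref{eq-m-pro}), so that the only genuine work is the symmetric monoidal bookkeeping of $(\cat, \boe, \E)$ together with the verification that the second equivalence is braided monoidal over $\E$. First I would check that the two coefficient categories $\C_1 \boe \D_1^{\rev}$ and $\C_2 \boe \D_2^{\rev}$ are fully faithful multifusion categories over $\E$, so that (\ref{eq-m-pro}) applies with $\C \coloneqq \C_1 \boe \D_1^{\rev}$ and $\CP^{\rev} \coloneqq \C_2 \boe \D_2^{\rev}$. Each $\D_i^{\rev}$ is a multifusion category over $\E$ via $\E = \overline{\E} \xrightarrow{T_{\D_i}} \overline{Z(\D_i)} \cong Z(\D_i^{\rev})$ (as in Expl.\,\ref{expl-fully-faith}), and Expl.\,\ref{ex3} then shows that $\C_i \boe \D_i^{\rev}$ is fully faithful multifusion over $\E$. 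Substituting into (\ref{eq-m-pro}) gives
\[ \Fun_{\C_1 \boe \D_1^{\rev}}(\M, \M') \boe \Fun_{\C_2 \boe \D_2^{\rev}}(\N, \N') \simeq \Fun_{(\C_1 \boe \D_1^{\rev}) \boe (\C_2 \boe \D_2^{\rev})}(\M \boe \N, \M' \boe \N'), \]
and rewriting the subscript as $\C_1 \boe \C_2 \boe \D_1^{\rev} \boe \D_2^{\rev}$, using the associativity and symmetry of the symmetric monoidal $2$-category $(\cat, \boe, \E)$, yields the first asserted equivalence.

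Next I would specialize $\C_1 = \D_1 = \M = \M' = \C$ and $\C_2 = \D_2 = \N = \N' = \D$. The two left-hand factors become $\Fun_{\C \boe \C^{\rev}}(\C, \C) \simeq Z(\C, \E)$ and $\Fun_{\D \boe \D^{\rev}}(\D, \D) \simeq Z(\D, \E)$ by \cite[Thm.\,4.18]{Wei}. On the right-hand side, using $(\C \boe \D)^{\rev} = \C^{\rev} \boe \D^{\rev}$ together with associativity of $\boe$, the subscript $\C \boe \D \boe \C^{\rev} \boe \D^{\rev}$ regroups as $(\C \boe \D) \boe (\C \boe \D)^{\rev}$, so the right-hand side is $\Fun_{(\C \boe \D) \boe (\C \boe \D)^{\rev}}(\C \boe \D, \C \boe \D) \simeq Z(\C \boe \D, \E)$, again by \cite[Thm.\,4.18]{Wei}. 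This produces the underlying equivalence of categories $Z(\C, \E) \boe Z(\D, \E) \simeq Z(\C \boe \D, \E)$, realized by $f \boe g \mapsto f \boxtimes_{\D} g$.

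It remains to upgrade this to a braided monoidal equivalence over $\E$, and this is the only step I expect to require real care. The monoidal structure is inherited from the ``moreover'' clause of Thm.\,\ref{thm-Fun-braided-bim}: with $\M = \M'$ and $\N = \N'$ that equivalence lands in $\BMod_{Z(\C, \E)|Z(\CP, \E)}(\Alg(\cat))$, which in the present case (where the gluing category is $\E$) specializes to a monoidal equivalence over $\E$ carrying $f \boe g$ to $f \boxtimes_{\D} g$; compatibility with composition of functors makes monoidality transparent. The genuine point is that the braidings correspond: on the source, $Z(\C, \E) \boe Z(\D, \E)$ carries the braiding of the relative Deligne product of two $\NBFCe$'s (cf. Lem.\,\ref{lem-modular-tp}), while on the target $Z(\C \boe \D, \E)$ carries its intrinsic Drinfeld-center braiding. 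I would verify they agree by transporting both through the identification $\Fun_{\C \boe \C^{\rev}}(\C, \C) \simeq Z(\C, \E)$ and the explicit half-braidings of Expl.\,\ref{expl-braided-bimodule}, under which the braiding on each factor reduces to the double braiding of Expl.\,\ref{expl-double-braiding}. Since the monoidal structures already match, this double-braiding description forces the full equivalence to be braided, completing the proof.
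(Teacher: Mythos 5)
Your proposal is correct and follows essentially the same route as the paper: the paper's argument is exactly the substitution $\C = \C_1 \boe \D_1^{\rev}$, $\CP^{\rev} = \C_2 \boe \D_2^{\rev}$ into (\ref{eq-m-pro}) followed by the specialization $\C_1 = \D_1 = \M = \M' = \C$, $\C_2 = \D_2 = \N = \N' = \D$ and the identification $\Fun_{\C \boe \C^{\rev}}(\C, \C) \simeq Z(\C, \E)$ from \cite[Thm.\,4.18]{Wei}. Your additional checks --- that $\C_i \boe \D_i^{\rev}$ is fully faithful multifusion over $\E$ (via Expl.\,\ref{ex3}) and that the braidings match through the ``moreover'' clause of Thm.\,\ref{thm-Fun-braided-bim} and the double-braiding description --- are details the paper leaves implicit, not a different argument.
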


We introduce two symmetric monoidal categories $\MFus$ and $\BFus$.
\begin{itemize}
\item The category $\MFus$ consists of fully faithful multifusion categories over $\E$, and the equivalence classes of  bimodules in $\cat$ (see Def.\,\ref{defn-CD-bimodule}). 
The category $\MFus$ with the tensor product $\boe$ and the tensor unit $\E$ is a symmetric monoidal category.
\item The category $\BFus$ consists of braided fusion categories over $\E$, and the equivalence classes of bimodules in $\Alg(\cat)$ (see Def.\,\ref{defn-cd-braided-bim}). 
The category $\BFus$ with the tensor product $\boe$ and the unit $\E$ is a symmetric monoidal category. 
\end{itemize}

\begin{thm}
\label{thm-func-E}
The assignment 
\[ \C \mapsto Z(\C, \E), \qquad \M \mapsto  \Fun_{\C \boe \D^{\rev}}(\M, \M) \]
defines a symmetric monoidal functor $\mathfrak{Z}_{/\E}: \MFus \to \BFus$.
\end{thm}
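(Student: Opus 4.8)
The plan is to verify in turn that the assignment is well defined on objects and on morphisms, that it preserves composition and identities, and that it carries the symmetric monoidal structure of $\MFus$ to that of $\BFus$ coherently. All of the substantial categorical input has already been assembled in the preceding results, so the argument is mostly a matter of organizing these facts into the checks required of a symmetric monoidal functor.

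First I would check well-definedness. On objects, $Z(\C, \E)$ is a $\NBFCe$ by Expl.\,\ref{ex-center-BFCe}, hence an object of $\BFus$. On morphisms, a bimodule $\M \in \BMod_{\C|\D}(\cat)$ is sent to $\Fun_{\C \boe \D^{\rev}}(\M, \M)$, which lies in $\BMod_{Z(\C, \E)|Z(\D, \E)}(\Alg(\cat))$ by Expl.\,\ref{expl-braided-bimodule}; thus it is a morphism $Z(\C, \E) \to Z(\D, \E)$ in $\BFus$. Since morphisms in $\MFus$ are equivalence classes of bimodules, I must also observe that an equivalence $\M \simeq \M'$ in $\BMod_{\C|\D}(\cat)$ induces an equivalence $\Fun_{\C \boe \D^{\rev}}(\M, \M) \simeq \Fun_{\C \boe \D^{\rev}}(\M', \M')$ in $\BMod_{Z(\C, \E)|Z(\D, \E)}(\Alg(\cat))$, so that the assignment descends to equivalence classes and defines a map on hom-sets.

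Second, functoriality. Preservation of composition is exactly the content of Thm.\,\ref{thm-Fun-braided-bim} in the case $\M = \M'$, $\N = \N'$: for $\M \in \BMod_{\C|\D}(\cat)$ and $\N \in \BMod_{\D|\CP}(\cat)$, the composite in $\MFus$ is $\M \boxtimes_{\D} \N$, and
\[
\Fun_{\C \boe \CP^{\rev}}(\M \boxtimes_{\D} \N, \M \boxtimes_{\D} \N) \simeq \Fun_{\C \boe \D^{\rev}}(\M, \M) \boxtimes_{Z(\D, \E)} \Fun_{\D \boe \CP^{\rev}}(\N, \N)
\]
as $Z(\C, \E)$-$Z(\CP, \E)$ bimodules in $\Alg(\cat)$, i.e. as composites in $\BFus$. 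Preservation of identities follows from the equivalence $\Fun_{\C \boe \C^{\rev}}(\C, \C) \simeq Z(\C, \E)$ recorded in Cor.\,\ref{cor-Fun-braided-bim}, since the identity morphism on $\C$ in $\MFus$ is the regular bimodule $\C$, and the identity on $Z(\C, \E)$ in $\BFus$ is $Z(\C, \E)$ viewed as the regular bimodule over itself.

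Third, the symmetric monoidal structure. The unit constraint is $\epsilon\colon \E \xrightarrow{\sim} Z(\E, \E)$, valid because $Z(\E, \E) = \E$. The multiplicativity constraint is the braided monoidal equivalence $\mu_{\C, \D}\colon Z(\C, \E) \boe Z(\D, \E) \xrightarrow{\sim} Z(\C \boe \D, \E)$ of Cor.\,\ref{cor-Fun-braided-bim}, whose naturality in the bimodule variables is supplied by the same corollary and the equivalence (\ref{eq-m-pro}): for morphisms $\M, \N$ the square relating $\Fun_{\C_1 \boe \C_2 \boe \D_1^{\rev} \boe \D_2^{\rev}}(\M \boe \N, \M' \boe \N')$ to $\Fun_{\C_1 \boe \D_1^{\rev}}(\M, \M') \boe \Fun_{\C_2 \boe \D_2^{\rev}}(\N, \N')$ commutes up to the canonical equivalence. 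What remains is to confirm the associativity, unit, and symmetry coherence diagrams for $\mu$ and $\epsilon$; these reduce to the corresponding coherences for $\boe$ on $\MFus$ and $\BFus$ together with the associativity, commutativity, and unitality of $\boxdot_{\E}$ recorded in Rem.\,\ref{rem-sun-prop-bE}. The main obstacle I expect is precisely this last point: verifying that $\mu$ and $\epsilon$ are natural with respect to composition of bimodules and satisfy the hexagon and associativity coherence conditions. Because morphisms are only equivalence classes, these are equalities of classes rather than genuine $2$-cells, so the task is lighter than full bicategorical coherence; I would discharge it by checking that every equivalence in sight is induced by the explicit assignments $f \boxtimes g \mapsto f \boxtimes g$ and $f \boxtimes g \mapsto I_{\C} \odot (f(-) \boxtimes g(-))$ of Thm.\,\ref{thm-Fun-braided-bim} and Cor.\,\ref{cor-Fun-braided-bim}, so that the coherence diagrams commute already on representatives.
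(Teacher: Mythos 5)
Your proposal is correct and follows essentially the same route as the paper's proof, which rests on exactly the three ingredients you invoke: Expl.\,\ref{expl-braided-bimodule} for well-definedness on morphisms, Thm.\,\ref{thm-Fun-braided-bim} for preservation of composition, and Cor.\,\ref{cor-Fun-braided-bim} for the symmetric monoidal structure. The routine checks you additionally spell out (descent to equivalence classes of bimodules, preservation of identities via $\Fun_{\C \boe \C^{\rev}}(\C, \C) \simeq Z(\C, \E)$, and the coherence of the constraints, which indeed trivialize since morphisms in $\MFus$ and $\BFus$ are only equivalence classes) are left implicit in the paper's three-line proof but are handled exactly as you describe.
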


\begin{proof}
By Expl.\,\ref{expl-braided-bimodule}, $\FZ_{/\E}$ is well defined on morphism. 
By Thm.\,\ref{thm-Fun-braided-bim}, $\FZ_{/\E}$ is a well-defined functor.
By Cor.\,\ref{cor-Fun-braided-bim}, $\FZ_{/\E}$ is a symmetric monoidal functor.
\end{proof}

\subsection{Exercises of Monoidal functors over $\E$}

\begin{expl}
Let $\C, \D$ be multifusion categories over $\E$ and $f: \C \to \D$ a monoidal functor over $\E$. The $\C$-$\C$ bimodule structure on $\D$ is denoted by ${}_f\D_f$.
\begin{itemize}
\item $\C$ belongs to $\BMod_{\C|\C}(\cat)$. The left $\E$-module on $\C$ is defined as $e \odot x \coloneqq T_{\C}(e) \otimes x$ for $e \in \E$, $x \in \C$.
The monoidal natural isomorphism is defined as $v^{\C}_e: T_{\C}(e) \otimes - \xrightarrow{z_{e,-}} - \otimes T_{\C}(e)$ for all $e \in \E$, $(T_{\C}(e), z_{e,-}) \in Z(\C)$.
 \item ${}_f\D_f$ belongs to $\BMod_{\C|\C}(\cat)$. The left $\E$-module structure on $\D$ is defined as $e \odot d \coloneqq T_{\D}(e) \otimes d$ for $e \in \E$, $d \in \D$. The $\C$-$\C$ bimodule structure on $\D$ is defined as $c \odot d \odot \tilde{c} \coloneqq f(c) \otimes d \otimes f(\tilde{c})$ for $c, \tilde{c} \in \C$, $d \in \D$. 
The monoidal natural isomorphism is defined as 
\[ v^{\D}_e: T_{\C}(e) \odot d = f(T_{\C}(e)) \otimes d \simeq T_{\D}(e) \otimes d \xrightarrow{\hat{z}_{e,d}} d \otimes T_{\D}(e) \simeq d \otimes f(T_{\C}(e)) = d \odot T_{\C}(e) \]
for all $e \in \E$, $(T_{\D}(e), \hat{z}_{e,-}) \in Z(\D)$.
\item ${}_f\D$ belongs to $\BMod_{\C|\D}(\cat)$. The left $\E$-module structure on $\D$ is defined as $e \odot d \coloneqq T_{\D}(e) \otimes d$ for $e \in \E$, $d \in \D$. The $\C$-$\D$ bimodule structure on $\D$ is defined as $c \odot d \odot d' \coloneqq f(c) \otimes d \otimes d'$ for $c \in \C, d, d' \in \D$. The monoidal natural isomorphism is defined as 
\[ \bar{v}^{\D}_e: T_{\C}(e) \odot d = f(T_{\C}(e)) \otimes d \simeq T_{\D}(e) \otimes d \xrightarrow{\hat{z}_{e,d}} d \otimes T_{\D}(e) \] 
for all $e \in \E$, $(T_{\D}(e), \hat{z}_{e,-}) \in Z(\D)$.
\end{itemize}
\end{expl}

\begin{lem}
\label{lem-CD-DD}
Let $\C, \D$ be multifusion categories over $\E$ and $f: \C \to \D$ a monoidal functor over $\E$. There is a monoidal equivalence
\begin{equation}
\label{eq-moe}
 \Fun_{\C \boe \C^{\rev}}(\C, \D) \simeq \Fun_{\C \boe \D^{\rev}}(\D, \D) 
 \end{equation}
\end{lem}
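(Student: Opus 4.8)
The plan is to deduce the equivalence from Proposition~\ref{prop-3.18} combined with Proposition~\ref{prop-C-3.16}, applied to the $\C$-$\D$ bimodule ${}_f\D$ introduced in the preceding example. First I would specialize Proposition~\ref{prop-3.18} to $\M = \M' = {}_f\D \in \BMod_{\C|\D}(\cat)$, which gives an equivalence of categories
\[ \Fun_{\C \boe \D^{\rev}}(\D, \D) = \Fun_{\C \boe \D^{\rev}}({}_f\D, {}_f\D) \simeq \Fun_{\C \boe \C^{\rev}}\bigl(\C, \Fun_{\D^{\rev}}({}_f\D, {}_f\D)\bigr). \]
It then suffices to identify the inner $\C$-$\C$ bimodule $\Fun_{\D^{\rev}}({}_f\D, {}_f\D)$ with ${}_f\D_f$ as an object of $\BMod_{\C|\C}(\cat)$, since substituting that into the right-hand side produces $\Fun_{\C \boe \C^{\rev}}(\C, {}_f\D_f) = \Fun_{\C \boe \C^{\rev}}(\C, \D)$, the left-hand side of the lemma.

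For this identification I would forget the left $\C$-action, so that ${}_f\D$ is just the regular right $\D$-module $\D$; Proposition~\ref{prop-C-3.16} then yields an equivalence $\Fun_{\D^{\rev}}(\D, \D) \simeq \D$ of multifusion categories over $\E$, realized by evaluation at the unit $g \mapsto g(\unit_{\D})$ with quasi-inverse $d \mapsto d \otimes -$. I then transport through this evaluation the $\C$-$\C$ bimodule structure $c \odot g \odot c' = c \odot g(c' \odot -)$ of $\Fun_{\D^{\rev}}({}_f\D, {}_f\D)$ coming from Example~\ref{expl-fun}. Since the left $\C$-action on ${}_f\D$ is $c \odot d = f(c) \otimes d$, a short computation gives $(c \odot g \odot c')(\unit_{\D}) = f(c) \otimes g(\unit_{\D}) \otimes f(c')$, which is precisely the bimodule structure $c \odot d \odot c' = f(c) \otimes d \otimes f(c')$ of ${}_f\D_f$. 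I would then verify that the left $\E$-module structure and the monoidal natural isomorphisms $v^{\C}$, $v^{\D}$ of ${}_f\D_f$ are likewise recovered; these are routine diagram chases using that $f$ is a monoidal functor over $\E$ together with the explicit formula for $v^{\Fun}$ in Example~\ref{expl-fun}. Combining the two steps produces the asserted equivalence of underlying categories.

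Finally, for the monoidal statement I would make the composite explicit: a bimodule functor $G \colon \C \to {}_f\D_f$ is sent to the endofunctor $G(\unit_{\C}) \otimes -$ of ${}_f\D$, and conversely an endofunctor $H$ is reconstructed from the object $H(\unit_{\D}) \in \D$ together with its induced half-braiding against $f(\C)$. The monoidal structure on $\Fun_{\C \boe \D^{\rev}}(\D, \D)$ is composition, while that on $\Fun_{\C \boe \C^{\rev}}(\C, \D)$ is the convolution induced by the algebra (pseudomonoid) structure of ${}_f\D_f$ in $\BMod_{\C|\C}(\cat)$, whose multiplication comes from $\otimes_{\D}$ and $f$. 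Under the identification $G \leftrightarrow H = G(\unit_{\C}) \otimes -$ both reduce to $\otimes_{\D}$ of the values at the respective units, since $H_1 \circ H_2 = \bigl(H_1(\unit_{\D}) \otimes H_2(\unit_{\D})\bigr) \otimes -$ by right $\D$-linearity. The hardest part will be this last point: upgrading the agreement on objects to a genuine monoidal equivalence, that is, matching the coherence isomorphisms of the two a priori distinct monoidal structures and checking the relevant monoidal-functor compatibility diagram, together with confirming that the evaluation equivalence of the second step respects the full over-$\E$ bimodule data rather than only the underlying $\C$-$\C$ bimodule structure.
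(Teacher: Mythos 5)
Your proposal matches the paper's proof: the paper obtains the same chain $\Fun_{\C \boe \D^{\rev}}(\D,\D) \simeq \Fun_{\C \boe \C^{\rev}}(\C, \Fun_{\D^{\rev}}(\D,\D)) \simeq \Fun_{\C \boe \C^{\rev}}(\C,\D)$ by specializing Prop.~\ref{prop-3.18} to ${}_f\D$ and then applying Prop.~\ref{prop-C-3.16}, exactly as you do. The only difference is the monoidality step, where the paper cites the monoidal equivalence $\Fun_{\C \boxtimes \C^{\rev}}(\C,\D) = \Fun_{\C \boxtimes \D^{\rev}}(\D,\D)$ from \cite[Sec.\,3.2]{Liang} instead of carrying out the convolution-versus-composition comparison you sketch, so the coherence checks you flag as hardest are precisely what the paper outsources to that reference.
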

\begin{proof}
We have the composed equivalences
$\Fun_{\C \boe \D^{\rev}}(\D, \D) \simeq \Fun_{\C \boe \C^{\rev}}(\C, \Fun_{\D^{\rev}}(\D, \D)) \simeq \Fun_{\C \boe \C^{\rev}}(\C, \D)$ of categories by Prop.\,\ref{prop-3.18} and Prop.\,\ref{prop-C-3.16}. 
Since the equivalence $\Fun_{\C \boxtimes \C^{\rev}}(\C, \D) = \Fun_{\C \boxtimes \D^{\rev}}(\D, \D)$ holds as monoidal categories by \cite[Sec.\,3.2]{Liang}, (\ref{eq-moe}) is a monoidal equivalence.
\end{proof}

\begin{lem}
Let $\C, \D$ be fully faithful multifusion categories over $\E$ and $f: \C \to \D$ be a monoidal functor over $\E$.
There is an equivalence of right $Z(\D, \E)$-modules
\begin{equation}
\label{eq-}
\C \boxtimes_{Z(\C, \E)} \Fun_{\C \boe \C^{\rev}}(\C, {}_f\D_f) \simeq \D
\end{equation}
\end{lem}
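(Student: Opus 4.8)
The plan is to reduce the statement entirely to Theorem \ref{thm-Fun-braided-bim} by recognizing the left factor $\C$ as the endomorphism category of the regular module and by replacing $\Fun_{\C \boe \C^{\rev}}(\C, {}_f\D_f)$ by an endofunctor category whose ``middle'' fusion category is $\C$. First I would fix the right $Z(\D,\E)$-module structure on the factor $\Fun_{\C \boe \C^{\rev}}(\C, {}_f\D_f)$. By Lemma \ref{lem-CD-DD} there is a monoidal equivalence $\Fun_{\C \boe \C^{\rev}}(\C, {}_f\D_f) \simeq \Fun_{\C \boe \D^{\rev}}({}_f\D, {}_f\D)$, and the target is canonically an object of $\BMod_{Z(\C,\E)|Z(\D,\E)}(\Alg(\cat))$ by Expl.\,\ref{expl-braided-bimodule} (indeed it is $\FZ_{/\E}$ applied to the $1$-morphism ${}_f\D \in \BMod_{\C|\D}(\cat)$). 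Transporting this structure across the equivalence supplies the $Z(\C,\E)$-$Z(\D,\E)$ bimodule structure on $\Fun_{\C \boe \C^{\rev}}(\C, {}_f\D_f)$, so it suffices to prove $\C \boxtimes_{Z(\C,\E)} \Fun_{\C \boe \D^{\rev}}({}_f\D, {}_f\D) \simeq \D$ as right $Z(\D,\E)$-modules.

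Next I would apply Theorem \ref{thm-Fun-braided-bim} with the three categories there specialized to $\E$, $\C$, $\D$ (in that order), with $\M = \M' = \C \in \RMod_{\C}(\cat) = \BMod_{\E|\C}(\cat)$ the regular right $\C$-module and $\N = \N' = {}_f\D \in \BMod_{\C|\D}(\cat)$. Since $Z(\E,\E) = \E$, the relative tensor is taken over $Z(\C,\E)$ and the theorem produces an equivalence of right $Z(\D,\E)$-modules
\[ \Fun_{\C^{\rev}}(\C,\C) \boxtimes_{Z(\C,\E)} \Fun_{\C \boe \D^{\rev}}({}_f\D, {}_f\D) \simeq \Fun_{\D^{\rev}}\big( \C \boxtimes_{\C} {}_f\D,\ \C \boxtimes_{\C} {}_f\D \big). \]

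Then I would simplify both ends using Prop.\,\ref{prop-C-3.16}. On the left, $\Fun_{\C^{\rev}}(\C,\C) \simeq \C$ as multifusion categories over $\E$, which identifies the first factor with $\C$ endowed with its natural right $Z(\C,\E)$-module structure, so the left-hand side becomes the desired $\C \boxtimes_{Z(\C,\E)}(-)$. On the right, since the regular right $\C$-module is the unit for $\boxtimes_{\C}$, we get $\C \boxtimes_{\C} {}_f\D \simeq \D$ as an object of $\RMod_{\D}(\cat)$; here the induced left $\E$-action sends $e$ to $f(T_{\C}(e)) \otimes -$, which matches the action $T_{\D}(e) \otimes -$ on the regular $\D$ \emph{precisely because} $f$ is a monoidal functor over $\E$, through its structural isomorphism $f(T_{\C}(e)) \simeq T_{\D}(e)$. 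Consequently $\Fun_{\D^{\rev}}(\C \boxtimes_{\C} {}_f\D,\ \C \boxtimes_{\C} {}_f\D) \simeq \Fun_{\D^{\rev}}(\D,\D) \simeq \D$, again by Prop.\,\ref{prop-C-3.16}, and this last equivalence carries the right $Z(\D,\E)$-action of Theorem \ref{thm-Fun-braided-bim} to the standard $\otimes$-action on $\D$.

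Assembling these identifications yields the equivalence $\C \boxtimes_{Z(\C,\E)} \Fun_{\C \boe \C^{\rev}}(\C, {}_f\D_f) \simeq \D$ of right $Z(\D,\E)$-modules. The only genuine obstacle is the bookkeeping of the $Z(\D,\E)$-actions through the chain: one must check that the right $Z(\D,\E)$-structure that Theorem \ref{thm-Fun-braided-bim} places on $\Fun_{\D^{\rev}}(\D,\D)$ is the $\otimes$-action under $\Fun_{\D^{\rev}}(\D,\D) \simeq \D$, and that it agrees, under the transport of the first paragraph, with the action on $\Fun_{\C \boe \C^{\rev}}(\C, {}_f\D_f)$. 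The one substantive (non-formal) input is the identification $\C \boxtimes_{\C} {}_f\D \simeq \D$ over $\E$, which is exactly where the hypothesis that $f$ is a functor over $\E$ is used; everything else is an instance of results already established.
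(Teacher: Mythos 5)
Your proof is correct, and it rests on the same two pillars as the paper's proof---Prop.~\ref{prop-C-3.16} and Thm.~\ref{thm-Fun-braided-bim}---but you feed different bimodules into the theorem, so the routes genuinely diverge in the middle. The paper applies Thm.~\ref{thm-Fun-braided-bim} directly with the three categories specialized to $(\E, \C, \C)$, taking $\M = \M' = \C$ and $\N = \N' = {}_f\D_f \in \BMod_{\C|\C}(\cat)$, which yields the chain
\[
\Fun_{\C^{\rev}}(\C,\C) \boxtimes_{Z(\C,\E)} \Fun_{\C \boe \C^{\rev}}(\C, {}_f\D_f) \simeq \Fun_{\E \boe \C^{\rev}}(\C \boxtimes_{\C} \C,\ \C \boxtimes_{\C} {}_f\D_f) \simeq \Fun_{\C^{\rev}}(\C, {}_f\D_f) \simeq \D,
\]
where the last step is the evaluation-at-unit equivalence (the module-valued version of Prop.~\ref{prop-C-3.16}); it never invokes Lem.~\ref{lem-CD-DD} and never needs the identification $\C \boxtimes_{\C} {}_f\D \simeq \D$. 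You instead convert first, via Lem.~\ref{lem-CD-DD}, to $\Fun_{\C \boe \D^{\rev}}({}_f\D, {}_f\D)$ and run the theorem with $(\E, \C, \D)$ and $\N = \N' = {}_f\D$. The trade-off is instructive. Your route handles the module structure asserted in the statement more cleanly: with $\CP = \D$ the theorem literally outputs an equivalence of $\E$-$Z(\D,\E)$-bimodules, and Expl.~\ref{expl-braided-bimodule} supplies the $Z(\D,\E)$-action on the converted category, whereas the paper's instance of the theorem (with $\CP = \C$) outputs a right $Z(\C,\E)$-module equivalence and leaves the compatibility with the $Z(\D,\E)$-action (acting on the $\D$-values of functors) to be tracked by hand---something its one-line proof does not spell out. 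The cost on your side is the pair of extra identifications you correctly flag: that $\C \boxtimes_{\C} {}_f\D \simeq \D$ holds in $\BMod_{\E|\D}(\cat)$, which is precisely where the structure isomorphism $u_e\colon f(T_{\C}(e)) \simeq T_{\D}(e)$ of a monoidal functor over $\E$ enters (in the paper's route this same datum is consumed earlier, in making ${}_f\D_f$ an object of $\BMod_{\C|\C}(\cat)$), and that the equivalence of Lem.~\ref{lem-CD-DD} transports the $Z(\D,\E)$-action correctly. Both checks are routine, so your argument is sound; it is slightly longer than the paper's but keeps every module structure explicit, which the paper's terse chain does not.
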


\begin{proof}
We have the composed equivalence
$\Fun_{\C^{\rev}}(\C, \C) \boxtimes_{Z(\C, \E)} \Fun_{\C \boe \C^{\rev}}(\C, {}_f\D_f) \simeq \Fun_{\E \boe \C^{\rev}}(\C \boxtimes_{\C} \C, \C \boxtimes_{\C} {}_f\D_f)  \simeq \Fun_{\C^{\rev}}(\C, {}_f\D_f) \simeq \D$ by 
Prop.\,\ref{prop-C-3.16} and Thm.\,\ref{thm-Fun-braided-bim}.
\end{proof}


\begin{prop}
\label{prop-closed-bimodule}
Let $\C, \D, \B$ be $\NBFCe$'s.
If $\M \in \BMod_{\C|\D}(\Alg(\cat))$ and $\N \in \BMod_{\D|\B}(\Alg(\cat))$ are closed, then $\M \boxtimes_{\D} \N \in \BMod_{\C|\B}(\Alg(\cat))$ is closed.
\end{prop}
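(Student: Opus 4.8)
The plan is to show that the structure functor $\phi_{\M\boxtimes_\D\N}\colon\overline{\C}\boe\B\to Z(\M\boxtimes_\D\N,\E)$ is an equivalence, since by Def.\,\ref{defn-cd-braided-bim} this is precisely what it means for $\M\boxtimes_\D\N$ to be closed. It therefore suffices to compute the relative center $Z(\M\boxtimes_\D\N,\E)$ and to identify the resulting equivalence with $\phi_{\M\boxtimes_\D\N}$.

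First I would establish a gluing formula for the relative center of a relative tensor product of multifusion categories over the braided category $\D$, namely
\[ Z(\M\boxtimes_\D\N,\E)\simeq Z(\M,\E)\boxtimes_{Z(\D,\E)}Z(\N,\E). \]
Using the identification $Z(\W,\E)\simeq\Fun_{\W\boe\W^{\rev}}(\W,\W)$ (see Cor.\,\ref{cor-Fun-braided-bim}) together with the interchange law $(\M\boxtimes_\D\N)\boe(\M\boxtimes_\D\N)^{\rev}\simeq(\M\boe\M^{\rev})\boxtimes_{\D\boe\D^{\rev}}(\N\boe\N^{\rev})$, this is exactly the ``cylinder'' gluing already present in Cor.\,\ref{cor-cylinder-c} and in the $\Fun$-factorization of Thm.\,\ref{thm-Fun-braided-bim}; the central $\D$-actions on $\M$ and $\N$ supplied by $\phi_\M$ and $\phi_\N$ are the data needed to form the relative tensor product over $Z(\D,\E)$. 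As a consistency check, the Frobenius--Perron dimensions agree: closedness of $\M$ and $\N$ forces $\FPdim(\M)^2=\FPdim(\C)\FPdim(\D)$ and $\FPdim(\N)^2=\FPdim(\D)\FPdim(\B)$, so that $\FPdim(\M\boxtimes_\D\N)^2=\FPdim(\C)\FPdim(\B)$ and hence $\FPdim(Z(\M\boxtimes_\D\N,\E))=\FPdim(\C)\FPdim(\B)/\FPdim(\E)=\FPdim(\overline{\C}\boe\B)$.

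Next I would feed in the hypotheses. Since $\M$ and $\N$ are closed, $\phi_\M$ and $\phi_\N$ give braided equivalences $Z(\M,\E)\simeq\overline{\C}\boe\D$ and $Z(\N,\E)\simeq\overline{\D}\boe\B$, so the gluing formula becomes
\[ Z(\M\boxtimes_\D\N,\E)\simeq(\overline{\C}\boe\D)\boxtimes_{Z(\D,\E)}(\overline{\D}\boe\B). \]
Under these equivalences the $\D$-factor of $Z(\M,\E)$ and the $\overline{\D}$-factor of $Z(\N,\E)$ carry the gluing action, while $\overline{\C}$ and $\B$ are transparent to it. Invoking the relative nondegeneracy $Z(\D,\E)\simeq\D\boe\overline{\D}$ of the $\NBFCe$ $\D$ and the cancellation $\D\boxtimes_{Z(\D,\E)}\overline{\D}\simeq\E$ (a consequence of the $\NBFCe$ structure, cf.\ the properties collected in Rem.\,\ref{rem-sun-prop-bE}), I can pull $\overline{\C}$ and $\B$ out of the relative tensor product and collapse the middle:
\[ Z(\M\boxtimes_\D\N,\E)\simeq\overline{\C}\boe\big(\D\boxtimes_{Z(\D,\E)}\overline{\D}\big)\boe\B\simeq\overline{\C}\boe\E\boe\B\simeq\overline{\C}\boe\B. \]

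Finally I would verify that this composite equivalence is naturally isomorphic, as a braided monoidal functor over $\E$, to the structure functor $\phi_{\M\boxtimes_\D\N}$ assembled in $\BMod_{\C|\B}(\Alg(\cat))$: both route $\overline{\C}$ through $\phi_\M$ and $\B$ through $\phi_\N$, so they agree on the two factors, and one checks compatibility with the braidings and the monoidal constraints. I expect the main obstacle to be the bookkeeping concentrated in the middle two paragraphs: justifying the center gluing formula in the multifusion (rather than module-category) setting, and tracking the $\D$-versus-$\overline{\D}$ module structures carefully enough to see that the collapse $\D\boxtimes_{Z(\D,\E)}\overline{\D}\simeq\E$ is compatible with $\phi_{\M\boxtimes_\D\N}$. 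Once the module structures are correctly aligned, the remaining identifications are formal consequences of Rem.\,\ref{rem-sun-prop-bE} and Cor.\,\ref{cor-Fun-braided-bim}.
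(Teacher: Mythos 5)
Your outline stands or falls with the gluing formula $Z(\M\boxtimes_\D\N,\E)\simeq Z(\M,\E)\boxtimes_{Z(\D,\E)}Z(\N,\E)$, and that formula is false. Thm.\,\ref{thm-Fun-braided-bim} glues relative centers of \emph{bimodule categories}, i.e.\ terms of the form $\Fun_{\C\boe\D^{\rev}}(\M,\M)$ over the braided category $Z(\D,\E)$ attached to a middle \emph{multifusion} category $\D$; it says nothing about the full center $Z(\M,\E)$ of an algebra $\M$ glued over a \emph{braided} middle $\D$ acting centrally. Indeed there is no canonical $Z(\D,\E)$-action on $Z(\M,\E)$ at all --- only $\D$ maps there, via $\phi_{\M}$ --- and a concrete test refutes the formula: take $\E=\vect$, $\C=\B=\vect$, $\CF$ a fusion category, $\D=Z(\CF)$, $\M=\CF$, $\N=\CF^{\rev}$ (both closed, since $Z(\CF^{\rev})\simeq\overline{Z(\CF)}$). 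By Cor.\,\ref{cor-cylinder-c} and Prop.\,\ref{prop-C-3.16}, $\M\boxtimes_{\D}\N=\CF\boxtimes_{Z(\CF)}\CF^{\rev}\simeq\Fun(\CF,\CF)$, whose center is $\vect$, consistent with Prop.\,\ref{prop-closed-bimodule}; but your right-hand side $Z(\CF)\boxtimes_{Z(Z(\CF))}\overline{Z(\CF)}$ is, by the same cylinder computation applied to $Z(\CF)$, of the size of $\Fun(Z(\CF),Z(\CF))$ --- nowhere near $\vect$. The same example kills your cancellation $\D\boxtimes_{Z(\D,\E)}\overline{\D}\simeq\E$: the genuine collapse of $\D$ against $\overline{\D}$ over $Z(\D,\E)$ produces a category of the size of $Z(\D,\E)$, not $\E$; and this identity is in any case not among those of Rem.\,\ref{rem-sun-prop-bE}, which concerns $\boxdot_{\E}$ (a centralizer \emph{subcategory} of $\boe$), a different operation from a balanced quotient over $Z(\D,\E)$. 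Your $\FPdim$ consistency check fails for the same reason: $\FPdim(\M\boxtimes_\D\N)=\FPdim(\M)\FPdim(\N)/\FPdim(\D)$ does not hold for central actions of a nondegenerate $\D$ (here $\FPdim(\Fun(\CF,\CF))=\FPdim(\CF)^2$, not $1$), and $\M\boxtimes_\D\N$ is typically multifusion, so $\FPdim Z(\CP,\E)$ is not controlled by $\FPdim(\CP)^2$ either.

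The paper's proof avoids this trap. It first reduces to the special case $\C=\E$, $\D=Z(\M,\E)$, by replacing $\N$ with $\C^{\rev}\boe\N$ and using $Z(\C^{\rev}\boe\N,\E)\simeq Z(\C^{\rev},\E)\boe Z(\N,\E)$ together with the relative nondegeneracy results of \cite{DNO} (the one ingredient of your sketch that is sound). Then, with $\CP=\M\boxtimes_{Z(\M,\E)}\N$, the genuine gluing input is the computation $\Fun_{\M\boe\CP^{\rev}}(\CP,\CP)\simeq\N$, obtained from Lem.\,\ref{lem-CD-DD}, Lem.\,\ref{Lem-NDP} and $\Fun_{\M\boe\M^{\rev}}(\M,\M)\simeq Z(\M,\E)$ --- note that the left-hand side is a bimodule-category center, not $Z(\CP,\E)$. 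Finally a Morita-theoretic step (faithfulness of $\CP$ over $\M\boe\CP^{\rev}$ and the double-dual equivalence) gives $Z(\M^{\rev}\boe\CP,\E)\simeq Z(\N,\E)\simeq\overline{Z(\M,\E)}\boe\B$, and cancelling the factor $Z(\M^{\rev},\E)\simeq\overline{Z(\M,\E)}$ yields $Z(\CP,\E)\simeq\B$. If you want to rescue a gluing-style argument, the objects you must glue are the $\D$-relative centralizer categories of the form $\Fun_{\M\boe\D^{\rev}}(\M,\M)$, which are strictly smaller than $Z(\M,\E)$; the discrepancy between these and the full centers is exactly the extra copy of $Z(\D,\E)$ your computation fails to cancel.
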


\begin{proof}
$\C^{\rev} \boe \N$ is a closed in $\BMod_{\overline{\C} \boe \D | \overline{\C} \boe \B}(\Alg(\cat))$ by the composed equivalences
\[ Z(\C^{\rev} \boe \N, \E) \cong Z(\C^{\rev}, \E) \boe Z(\N, \E) \simeq \overline{\C^{\rev}} \boe \C^{\rev} \boe \overline{\D} \boe \B \simeq  (\C \boe \overline{\D}) \boe (\overline{\C} \boe \B)  \]
The first and second equivalences hold by \cite[Prop.\,4.7, Cor.\,4.4]{DNO}.
And we have $\M \boxtimes_{\D} \N \simeq \M \boxtimes_{\overline{\C} \boe \D} (\C^{\rev} \boe \N)$.
It is enough to prove the special case when $\C = \E$ and $\D = Z(\M, \E)$.

Let $\CP = \M \boxtimes_{Z(\M, \E)} \N$. 
Since $\D = Z(\M, \E)$ is a braided fusion category over $\E$, the multifusion category $\M$ is indecomposable.
By  Lem.\,\ref{lem-CD-DD}, Lem.\,\ref{Lem-NDP} and \cite[Thm.\,4.18]{Wei}, 
the equivalences 
$\Fun_{\M \boe \CP^{\rev}}(\CP, \CP) \simeq \Fun_{\M \boe \M^{\rev}}(\M, \M \boxtimes_{Z(\M, \E)} \N) \simeq \Fun_{\M \boe \M^{\rev}}(\M, \M) \boxtimes_{Z(\M, \E)} \N \simeq Z(\M, \E) \boxtimes_{Z(\M, \E)} \N \simeq \N$
hold in $\BMod_{Z(\M, \E)|Z(\CP, \E)}(\Alg(\cat))$.
Since the module category $\CP$ over $\M \boe \CP^{\rev}$ is faithful, the following equivalence holds
\[ \Fun_{\Fun_{\M \boe \CP^{\rev}}(\CP, \CP)}(\CP, \CP) \simeq \M \boe \CP^{\rev} \]
Then $\M^{\rev} \boe \CP$ is Morita equivalent to $\Fun_{\M \boe \CP^{\rev}}(\CP, \CP)$ in $\cat$.
Then we have $Z(\M^{\rev} \boe \CP, \E) \simeq Z(\Fun_{\M \boe \CP^{\rev}}(\CP, \CP), \E) \simeq Z(\N, \E)$.
By the equivalence
$Z(\M^{\rev}, \E) \boe Z(\CP, \E) \simeq Z(\M^{\rev} \boe \CP, \E) \simeq Z(\N, \E) \simeq \overline{Z(\M, \E)} \boe \B$, we conclude the canonical functor $\B \to Z(\CP, \E)$ is an equivalence.
\end{proof}

\section{Integrate on a closed stratified surface}
Recall Section 5.2 in \cite{Wei}.

\begin{figure}[t]
\centering
\begin{subfigure}{0.49\linewidth}
\centering
\begin{tikzpicture}
\fill[green!10!white] (-1,-1) -- (1,-1) -- (1,1) -- (-1,1) -- cycle;
\fill[gray!20!white] (-3,-1) -- (-1,-1) -- (-1,1) -- (-3,1) -- cycle;
\fill[blue!20!white] (1,-1) -- (3,-1) -- (3,1) -- (1,1) -- cycle;
\draw [->, line width=1pt] (-1,1) -- (-1,-0.1); \draw[-, line width=1pt](-1, -0.1) -- (-1, -1);
\draw [->,line width=1pt] (1,-1)--(1, 0.1); \draw[-, line width=1pt](1, 0.1)--(1,1);
\draw (-1.25,1.6) node[anchor=north west] {$\M$};
\draw (0.77,1.6) node[anchor=north west] {$\N$};
\draw [color=blue](-2.1052648619193723,0.3087148761534759) node[anchor=north west] {$\C$};
\draw [color=blue](-0.12038188820630064,0.3087148761534759) node[anchor=north west] {$\D$};
\draw [color=blue](1.9380152697183661,0.3087148761534759) node[anchor=north west] {$\B$};
\end{tikzpicture}
\caption{}
\label{fig:sub1}
\end{subfigure}
\begin{subfigure}{0.49\linewidth}
\centering
\begin{tikzpicture}
\fill[blue!20!white] (-1,-1) -- (1,-1) -- (1,1) -- (-1,1) -- cycle;
\fill[gray!20!white] (-3,-1) -- (-1,-1) -- (-1,1) -- (-3,1) -- cycle;
\draw [->, line width=1pt] (-1,1) -- (-1,-0.1); \draw[-, line width=1pt](-1, -0.1) -- (-1, -1);
\draw (-1.7,1.6) node[anchor=north west] {$\M\otimes_{\D} \N^{\rev}$};
\draw [color=blue](-2.1052648619193723,0.3087148761534759) node[anchor=north west] {$\C$};
\draw [color=blue](-0.12038188820630064,0.3087148761534759) node[anchor=north west] {$\B$};
\end{tikzpicture}
\caption{}
\label{fig:sub2}
\end{subfigure}
\caption{Fig.\,\ref{fig:sub1} depicts a stratified 2-disk $K=(\Rb^2; \Rb \cup \Rb)$ with a coefficient system $A_K$ determined by 2-disk algebras $\C, \D, \B$ for 2-cells and 1-disk algebras $\M, \N$ for 1-cells. Fig.\,\ref{fig:sub2} depicts a stratified 2-disk $K'=(\Rb^2; \Rb)$ with a coefficient system $A_{K'}$ determined by 2-disk algebras $\C, \B$ for 2-cells and a 1-disk algebra $\M \otimes_{\D} \N^{\rev}$ for the 1-cell. 
$K'$ is obtained by fusing two 1-cells $\M, \N$ and a 2-cell $\D$ to a 1-cell $\M \otimes_{\D} \N^{\rev}$.}
\label{11}
\end{figure}

\begin{expl}[\cite{LiangFH}\,Expl.\,3.22]
\label{expl3.22}
 There are a few examples using the $\otimes$-excision property.
\begin{itemize}
\item[(1)] A stratified 2-disk $K = (\Rb^2; \Rb \cup \Rb)$ with a coefficient system $A_K$ is shown in Fig.\,\ref{fig:sub1}. The 1-disk algebra $\M$ is a $\C$-$\D$ bimodule, and $\N$ is a $\B$-$\D$ bimodule. Note that our convention of left and right is that if one stands on the 1-cell seeing the arrow pointing towards you, then the left hand side of you is treated as the left. By the $\otimes$-excision property, we have
\[ \int_{K} A_K \simeq \M \otimes_{\D} \N^{\rev} \]
Consider a process of fusing these two 1-cells $\M$, $\N$ into a 1-cell $\M \otimes_{\D} \N^{\rev}$. This process produces a new stratified 2-disk $K'$ and a new coefficient system $A_{K'}$ shown in Fig.\,\ref{fig:sub2}.
We have $\int_{K} A_{K} \simeq \int_{K'} A_{K'}$.
\item[(2)] Let $M$ be a stratified 2-disk with a coefficient system $A_M$ as shown in Fig.\,\ref{fig:sub11}. 
In particular, $\CP, \CQ$ are 0-disk algebras in $\V$; $\M_1, \dots, \M_m, \N_1, \dots, \N_n, \CL$ are 1-disk algebras; and $\C_0, \dots, \C_m = \D_0, \dots, \D_n = \C_0$ are 2-disk algebras. By the $\otimes$-excision property, we have
\[ \int_M A_M \simeq \CP \otimes_{\CL} \CQ \]

Consider a process of contracting the internal edge $\CL$ to a point. It produces a new stratfied 2-disk $M'$ with a new coefficient system $A_{M'}$ depicted in Fig.\,\ref{fig:sub22}. We have
\[ \int_{M} A_M \simeq \CP \otimes_{\CL} \CQ \simeq \int_{M'} A_{M'} \]
\item[(3)] Let $N$ be a stratified 2-disk with a coefficient system $A_N$ as depicted in Fig.\,\ref{fig-3-1}. By the same argument as the previous case, we have
\[ \int_N A_N \simeq \CP \otimes_{\CK \otimes_{\A} \CL} \CQ \]
Therefore, $\int_N A_N \simeq \int_{N'} A_{N'}$, where $N'$ is a stratified 2-disk with a cofficient system $A_{N'}$ as depicted in Fig.\,\ref{fig:3-2}.
\item[(4)] The $\otimes$-excision property also allows us to add 0-cells and 1-cells with proper target labels without changing the value of factorization homology.
\begin{itemize}
\item[(a)] On any 1-cell $e$ with the target label $\M$, we can add a new 0-cell labelled by the 0-disk algebra $\M$, which is obtained by forgetting its 1-disk algebra structure. Now $e$ breaks into two 1-cells both labelled by $\M$ and connected by a 0-cell labelled by the 0-disk algebra $\M$. 
\item[(b)] Between any two 0-cells $p$ and $q$ (not necessarily distinct) on the boundary of a given 2-cell labelled by $\C$, we can add an oriented 1-cell within this 2-cell from $p$ to $q$ labelled by the 1-disk algebra $\C$ obtained by forgetting its 2-disk algebra structure.
\end{itemize}
\end{itemize}
\end{expl}

Recall Section 5.1 and 5.4 in \cite{Wei}.

\begin{figure}[t]
\centering
\begin{subfigure}{0.49\linewidth}
\centering
\begin{tikzpicture}
\fill[blue!20!white] (-1.815083985599556,-0.9847888080177889) -- (2.5,-1) -- (2.5,1) -- (-1.823117296027546,1.01550548855167) -- cycle;
\draw [color=blue](-0.7381859530143546,0.28200598688042167);
\node [color=blue] at (-1.1, 0) {\small{$\CP$}};
\draw [color=blue](1.1742735973498657,0.2912897711054906);
\node [color=blue] at (1.7, 0) {\small{$\CQ$}};
\draw [->,line width=1pt] (-1.4113432011203064,-0.9995834173491595) -- (-1.1151377593651932,-0.5058988487332496);
\draw [->,line width=1pt] (-1.3912139246213802,0.9932149560445124) -- (-1.1006106189296851,0.4886650845489827);
\node [color=blue] at (-0.9,0.8){\small{$\M_m$}};
\node [color=blue] at (-0.8,-0.73) {\small{$\M_1$}};
\node [color=blue] at (1.4,0.8){\small{$\N_1$}};
\node [color=blue] at (1.38,-0.72){\small{$\N_n$}};
\node [color=blue] at (0.5,0.2){\small{$\CL$}};
\node [color=blue] at (0.2,0.65){\small{$\C_m=\D_0$}};
\node [color=blue] at (0.3,-0.54) {\small{$\C_0=\D_n$}};
\draw [line width=1pt] (-1.3912139246213802,0.9932149560445124)-- (-0.8076319431304017,-0.003539090793807049);
\draw [line width=1pt] (-1.4113432011203064,-0.9995834173491595)-- (-0.8076319431304017,-0.003539090793807049);
\draw [->,line width=1pt] (-0.8076319431304017,-0.003539090793807049) -- (0.3877210541306687,-0.003539090793807715);
\draw [line width=1pt] (-0.8076319431304017,-0.003539090793807049)-- (1.3838485518482282,0);
\draw [line width=1pt] (1.3838485518482282,0)-- (2,1);
\draw [line width=1pt] (1.3838485518482282,0)-- (2,-1);
\draw [->,line width=1pt] (2,1) -- (1.6956069998154117,0.5059769783911805);
\draw [->,line width=1pt] (2,-1) -- (1.698768131410779,-0.5111074241685122);
\begin{scriptsize}
\draw [fill=black] (-1.5321188601138636,0.45978912882297396) circle (1.5pt);
\draw [fill=black] (-1.6528945191074207,0.027009684096065443) circle (1.5pt);
\draw [fill=black] (-1.5220542218644004,-0.3957051223813801) circle (1.5pt);
\draw [fill=black] (2.098716520798477,0.4769459375170115) circle (1.5pt);
\draw [fill=black] (2.3565377555018454,0.031618350302105885) circle (1.5pt);
\draw [fill=black] (2.1221548148624194,-0.4137092369127997) circle (1.5pt);
\draw [fill=black] (-0.8076319431304017,-0.003539090793807049) circle (2pt);
\draw [fill=black] (1.4072868459121708,-0.003539090793807715) circle (2pt);
\end{scriptsize}
\end{tikzpicture}
\caption{}
\label{fig:sub11}
\end{subfigure}
\begin{subfigure}{0.49\linewidth}
\centering
\begin{tikzpicture}
\fill[blue!20!white] (-2.3,1) -- (-2.3,-1) -- (0.8,-1) -- (0.8,1) -- cycle;
\node [color=blue] at (-0.2,0){\small{$\CP \otimes_{\CL} \CQ$}};
\node [color=blue] at (-1.68,0.8){\small{$\M_m$}};
\node [color=blue] at (-1.65,-0.78) {\small{$\M_1$}};
\node [color=blue] at (0.2,0.8){\small{$\N_1$}};
\node [color=blue] at (0.2,-0.78){\small{$\N_n$}};
\node [color=blue] at (-0.8,0.8){\small{$\D_0$}};
\node [color=blue] at (-0.8,-0.8){\small{$\C_0$}};
\draw [line width=1pt] (-1.5,1)-- (-0.8088793981429308,0);
\draw [line width=1pt] (-0.8088793981429308,0)-- (-1.5,-1);
\draw [line width=1pt] (-0.8088793981429308,0)-- (-0.10423593792685772,1.00598350092124);
\draw [line width=1pt] (-0.8088793981429308,0)-- (-0.09697157235761986,-0.9917170306191709);
\draw [->,line width=1pt] (-1.5,1) -- (-1.2159280990646029,0.5889691319111537);
\draw [->,line width=1pt] (-1.5,-1) -- (-1.2220458840232147,-0.5978211107729805);
\draw [->,line width=1pt] (-0.09697157235761986,-0.9917170306191709) -- (-0.38109289177148986,-0.5959242874871457);
\draw [->,line width=1pt] (-0.1042359379268577,1.00598350092124) -- (-0.3493808336723666,0.6560026462356404);
\begin{scriptsize}
\draw [fill=black] (-1.5321188601138636,0.45978912882297396) circle (1.5pt);
\draw [fill=black] (-1.6528945191074207,0.027009684096065443) circle (1.5pt);
\draw [fill=black] (-1.5220542218644004,-0.3957051223813801) circle (1.5pt);
\draw [fill=black] (0.1,0.5) circle (1.5pt);
\draw [fill=black] (0.45,0) circle (1.5pt);
\draw [fill=black] (0.1,-0.47) circle (1.5pt);
\draw [fill=black] (-0.8088793981429308,0) circle (2pt);
\end{scriptsize}
\end{tikzpicture}
\caption{}
\label{fig:sub22}
\end{subfigure}
\caption{These two figures depict a process of contracting a 1-cell to a 0-cell with proper new target labels such that the value of factorization homology is not changed. Fig.\,\ref{fig:sub11} depicts a stratified 2-disk $M$ with a coefficient system $A_M$ determined by its target labels. Fig.\,\ref{fig:sub22} depicts another stratified 2-disk $M'$ with a coefficient system $A_{M'}$ determined by its target labels. $M'$ is obtained by contracting the internal edge $\CL$ in Fig.\,\ref{fig:sub11} to a point.}
\label{22}
\end{figure}

\begin{figure}[t]
\centering
\begin{subfigure}{0.49\linewidth}
\centering
\begin{tikzpicture}
\fill[fill=blue!20!white] (-1.7944766176864437,-1.0067998238684739) -- (2.5,-1) -- (2.5,1) -- (-1.8011833499349705,0.9985131184410266) -- cycle;
\node [color=blue] at (-1.1,0) {\small{P}};
\node [color=blue] at (1.8, 0) {\small{Q}};
\draw [->,line width=1pt] (-1.4113432011203064,-0.9995834173491595) -- (-1.1151377593651932,-0.5058988487332496);
\draw [->,line width=1pt] (-1.3912139246213802,0.9932149560445124) -- (-1.1006106189296851,0.4886650845489827);
\node [color=blue] at (-0.8,0.7){\small{$\M_m$}};
\node [color=blue] at (-0.8,-0.7){\small{$\M_1$}};
\node [color=blue] at (1.5,0.7) {\small{$\N_1$}};
\node [color=blue] at (1.46,-0.7) {\small{$\N_n$}};
\node [color=blue] at (0.26,0.54) {\small{$\CL$}};
\draw [line width=1pt] (-1.3912139246213802,0.9932149560445124)-- (-0.8076319431304017,-0.003539090793807049);
\draw [line width=1pt] (-1.4113432011203064,-0.9995834173491595)-- (-0.8076319431304017,-0.003539090793807049);
\draw [line width=1pt] (1.4487398362157924,0)-- (2,1);
\draw [line width=1pt] (1.4487398362157924,0)-- (2,-1);
\draw [->,line width=1pt] (2,1) -- (1.7276647881947529,0.5059769783911805);
\draw [->,line width=1pt] (2,-1) -- (1.7304929985742508,-0.5111074241685122);
\draw [shift={(0.3241158792485071,-2.272704343505922)},line width=1pt]  plot[domain=1.1112856068093877:2.0334440190860974,variable=\t]({1*2.535737383400623*cos(\t r)+0*2.535737383400623*sin(\t r)},{0*2.535737383400623*cos(\t r)+1*2.535737383400623*sin(\t r)});
\draw [shift={(0.31621103586228405,2.7670831544684664)},line width=1pt]  plot[domain=4.32703999864502:5.100874934430051,variable=\t]({1*2.98987803553478*cos(\t r)+0*2.98987803553478*sin(\t r)},{0*2.98987803553478*cos(\t r)+1*2.98987803553478*sin(\t r)});
\draw [->,line width=1pt] (0.23728661977429052,0.26154599086805175) -- (0.41874331882712473,0.2612667939094657);
\draw [->,line width=1pt] (0.24297419356040087,-0.22189778095132393) -- (0.40749884711286244,-0.22140094334607507);
\node [color=blue] at (0,0) {\small{$\A$}};
\node [color=blue] at (0.3,-0.5) {\small{$\CK$}};
\begin{scriptsize}
\draw [fill=black] (-1.5321188601138636,0.45978912882297396) circle (1.5pt);
\draw [fill=black] (-1.6528945191074207,0.027009684096065443) circle (1.5pt);
\draw [fill=black] (-1.5220542218644004,-0.3957051223813801) circle (1.5pt);
\draw [fill=black] (2.098716520798477,0.4769459375170115) circle (1.5pt);
\draw [fill=black] (2.3565377555018454,0.031618350302105885) circle (1.5pt);
\draw [fill=black] (2.1221548148624194,-0.4137092369127997) circle (1.5pt);
\draw [fill=black] (-0.8076319431304017,-0.003539090793807049) circle (2pt);
\draw [fill=black] (1.4487398362157924,0) circle (2pt);
\end{scriptsize}
\end{tikzpicture}
\caption{}
\label{fig-3-1}
\end{subfigure}
\begin{subfigure}{0.49\linewidth}
\centering
\begin{tikzpicture}
\fill[blue!20!white] (-1.815083985599556,-0.9847888080177889) -- (2.5,-1) -- (2.5,1) -- (-1.823117296027546,1.01550548855167) -- cycle;
\draw [color=blue](-0.7381859530143546,0.28200598688042167);
\node [color=blue] at (-1.1, 0) {\small{P}};
\draw [color=blue](1.1742735973498657,0.2912897711054906);
\node [color=blue] at (1.7, 0) {\small{Q}};
\draw [->,line width=1pt] (-1.4113432011203064,-0.9995834173491595) -- (-1.1151377593651932,-0.5058988487332496);
\draw [->,line width=1pt] (-1.3912139246213802,0.9932149560445124) -- (-1.1006106189296851,0.4886650845489827);
\node [color=blue] at (-0.9,0.8){\small{$\M_m$}};
\node [color=blue] at (-0.8,-0.73) {\small{$\M_1$}};
\node [color=blue] at (1.4,0.8){\small{$\N_1$}};
\node [color=blue] at (1.38,-0.72){\small{$\N_n$}};
\node [color=blue] at (0.35,0.25){\small{$\CK \otimes_{\A} \CL$}};
\draw [line width=1pt] (-1.3912139246213802,0.9932149560445124)-- (-0.8076319431304017,-0.003539090793807049);
\draw [line width=1pt] (-1.4113432011203064,-0.9995834173491595)-- (-0.8076319431304017,-0.003539090793807049);
\draw [->,line width=1pt] (-0.8076319431304017,-0.003539090793807049) -- (0.3877210541306687,-0.003539090793807715);
\draw [line width=1pt] (-0.8076319431304017,-0.003539090793807049)-- (1.3838485518482282,0);
\draw [line width=1pt] (1.3838485518482282,0)-- (2,1);
\draw [line width=1pt] (1.3838485518482282,0)-- (2,-1);
\draw [->,line width=1pt] (2,1) -- (1.6956069998154117,0.5059769783911805);
\draw [->,line width=1pt] (2,-1) -- (1.698768131410779,-0.5111074241685122);
\begin{scriptsize}
\draw [fill=black] (-1.5321188601138636,0.45978912882297396) circle (1.5pt);
\draw [fill=black] (-1.6528945191074207,0.027009684096065443) circle (1.5pt);
\draw [fill=black] (-1.5220542218644004,-0.3957051223813801) circle (1.5pt);
\draw [fill=black] (2.098716520798477,0.4769459375170115) circle (1.5pt);
\draw [fill=black] (2.3565377555018454,0.031618350302105885) circle (1.5pt);
\draw [fill=black] (2.1221548148624194,-0.4137092369127997) circle (1.5pt);
\draw [fill=black] (-0.8076319431304017,-0.003539090793807049) circle (2pt);
\draw [fill=black] (1.4072868459121708,-0.003539090793807715) circle (2pt);
\end{scriptsize}
\end{tikzpicture}
\caption{}
\label{fig:3-2}
\end{subfigure}
\caption{These two figures depict a process of merging two 1-cells and one 2-cell to a single 1-cell with proper new target labels such that the value of factorization homology is not changed.}
\end{figure}

\begin{lem}
For any stratified surface $\Sigma$ with an anomaly-free coefficient system $A$ in $\cat$ (see Def.\,5.26), any one of the processes described in Expl.\,\ref{expl3.22}(1), (2), (3), (4) produces a new stratified surface $\Sigma'$ and a new anomaly-free coefficient system $A'$ in $\cat$. We have
\begin{equation}
\label{eq-4.8}
 \int_{\Sigma} A \simeq \int_{\Sigma'} A'. 
 \end{equation}
\end{lem}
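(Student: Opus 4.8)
The plan is to reduce each of the four moves to a purely local statement on a stratified $2$-disk and then transport the resulting local equivalence to all of $\Sigma$ by the $\otimes$-excision property of factorization homology. Each process in Expl.\,\ref{expl3.22} alters the coefficient system only inside an embedded open stratified $2$-disk $D \subset \Sigma$, while the strata and labels of the complement are left untouched. Writing $\Sigma = D \cup_{\partial D} R$ for this decomposition, the new surface $\Sigma'$ is obtained by replacing $D$ with a new disk $D'$ carrying the modified labels and re-gluing along the \emph{same} collar, so that $\Sigma' = D' \cup_{\partial D'} R$ with the identical $R$ and identical boundary data.

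First I would establish the value statement (\ref{eq-4.8}). The $\otimes$-excision property presents $\int_\Sigma A$ as a relative tensor product of $\int_D A_D$ with $\int_R A_R$ over the category assigned to the separating strata in $\partial D$, and analogously for $\Sigma'$. For each of the cases (1)--(4), Expl.\,\ref{expl3.22} already produces a boundary-preserving equivalence $\int_D A_D \simeq \int_{D'} A_{D'}$ (e.g. $\M \boxtimes_{\D} \N^{\rev}$ for (1), $\CP \boxtimes_{\CL} \CQ$ for (2), $\CP \boxtimes_{\CK \boxtimes_{\A} \CL} \CQ$ for (3), and an identity collar for (4)). Since this equivalence is compatible with the collar data along $\partial D$, substituting it into the excision presentation yields $\int_\Sigma A \simeq \int_{\Sigma'} A'$ at once.

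It remains to check that $A'$ is again anomaly-free, i.e. that the new cell labels still satisfy the closedness conditions required by Def.\,5.26. The $2$-cell labels are unchanged (or are the original ones restricted), so the only content is at the newly created $1$-cells, whose labels are the relative tensor products appearing above. The key input is Prop.\,\ref{prop-closed-bimodule}: the relative tensor product of closed bimodules over an $\NBFCe$ is again closed. For (1) and (2) this applies directly to $\M \boxtimes_{\D} \N^{\rev}$ and to the composite around the contracted vertex; for (3) one first identifies the merged label $\CK \boxtimes_{\A} \CL$ as a relative tensor product over the $\NBFCe$ $\A$ and then applies the proposition. For (4) the added cells carry labels obtained by forgetting a higher algebra structure, so the anomaly-free condition on the unchanged neighbourhood is preserved tautologically.

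The step I expect to be the main obstacle is this second one for the merging move (3): one must verify that the bimodule data produced at the fused configuration matches exactly the hypotheses of Prop.\,\ref{prop-closed-bimodule}, namely that $\A$, $\CK$, $\CL$ assemble into closed objects of the appropriate $\BMod$ $2$-categories over the relevant $\NBFCe$'s and that their composite is the relative tensor product to which closedness is known to be stable. Once the closedness of every composite label is secured, the anomaly-free property of $A'$ follows cell by cell, and together with the value equivalence of the second paragraph this completes the proof.
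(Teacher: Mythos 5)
Your global architecture (excision for the value of the integral, then a cell-by-cell check of anomaly-freeness) matches the paper, and your treatment of process (1) is exactly the paper's: $\M \boxtimes_{\D} \N^{\rev}$ is closed by Prop.\,\ref{prop-closed-bimodule}. The genuine gap is in process (2). The new label $\CP \boxtimes_{\CL} \CQ$ created by contracting the edge $\CL$ is a \emph{$0$-cell} label, i.e.\ an object of $\cat$, not a multifusion category over $\E$; Prop.\,\ref{prop-closed-bimodule} is a statement about objects of $\BMod_{\C|\D}(\Alg(\cat))$, so it cannot be applied ``directly to the composite around the contracted vertex'' as you assert. The anomaly-free condition at a $0$-cell is of a different kind: writing $\M = \M_1 \boxtimes_{\C_1} \cdots \boxtimes_{\C_{m-1}} \M_m$ and $\N = \N_1 \boxtimes_{\D_1} \cdots \boxtimes_{\D_{n-1}} \N_n$ for the fused surrounding $1$-cell labels, one must show that the algebra surrounding the new vertex acts by an equivalence, namely $\M \boxtimes_{\overline{\C}_0 \boe \D_0} \N \simeq \Fun_{\E}(\CP \boxtimes_{\CL} \CQ, \CP \boxtimes_{\CL} \CQ)$. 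This is the real content of the lemma, and the paper establishes it in three steps absent from your proposal: (i) Prop.\,\ref{prop-closed-bimodule} gives $Z(\M, \E) \simeq Z(\CL, \E) \simeq Z(\N^{\rev}, \E) \simeq \overline{\C}_0 \boe \D_0$, so $\M$ and $\N^{\rev}$ are Morita equivalent to $\CL$ in $\cat$; (ii) the anomaly-freeness of the original $0$-cells, via Def.\,\ref{defn-invertible}, identifies $\M \simeq \Fun_{\CL^{\rev}}(\CP, \CP)$ and $\N \simeq \Fun_{\CL}(\CQ, \CQ)$; (iii) Thm.\,\ref{thm-Fun-braided-bim} then computes $\Fun_{\CL^{\rev}}(\CP, \CP) \boxtimes_{Z(\CL, \E)} \Fun_{\CL}(\CQ, \CQ) \simeq \Fun_{\E}(\CP \boxtimes_{\CL} \CQ, \CP \boxtimes_{\CL} \CQ)$. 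You never invoke Thm.\,\ref{thm-Fun-braided-bim} at all, yet it is the workhorse of the entire paper (the reason Section 3 exists) and is indispensable precisely at this step.

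Relatedly, you single out process (3) as the expected main obstacle, but you have the difficulty inverted: in (3) the merged label $\CK \boxtimes_{\A} \CL$ is a $1$-cell label, a relative tensor product of two closed bimodules over the anomaly-free $2$-cell label $\A$, so Prop.\,\ref{prop-closed-bimodule} does apply to it directly, and the residual verifications at the endpoints $P$, $Q$ are of the same Morita-theoretic kind as in (2) (the paper calls them routine once (2) is done). The hard case is (2), which your proposal disposes of in a single clause without the input it actually requires; as written, the anomaly-freeness of $A'$ for the contraction move is not proved.
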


\begin{proof}
Equivalence (\ref{eq-4.8}) holds by the $\otimes$-excision property of factorization homology.
$\M \boxtimes_{\D} \N^{\rev}$ in Fig.\,\ref{fig:sub2} is anomaly-free in $\cat$ by Prop.\,\ref{prop-closed-bimodule}.
Check that $\CP \boxtimes_{\CL} \CQ$ in Fig.\,\ref{fig:sub22} is anomaly-free in $\cat$.
Let $\M = \M_1 \boxtimes_{\C_1} \cdots \boxtimes_{\C_{m-1}} \M_m$ and $\N = \N_1 \boxtimes_{\D_1} \cdots \boxtimes_{\D_{n-1}} \N_n$. We have $Z(\M, \E) \simeq Z(\CL, \E) \simeq Z(\N^{\rev}, \E) \simeq \overline{\C}_0 \boe \D_0$ by Prop.\,\ref{prop-closed-bimodule}.
Then $\M$ and $\CL$ are Morita equivalent in $\cat$ and $\N^{\rev}$ and $\CL$ are Morita equivalent in $\cat$.
That is, $\M \simeq \Fun_{\CL^{\rev}}(\CP, \CP)$ and $\N \simeq \Fun_{\CL}(\CQ, \CQ)$ by Def.\,\ref{defn-invertible}.
Then $\M \boxtimes_{\overline{\C}_0 \boe \D_0} \N \simeq \M \boxtimes_{Z(\CL, \E)} \N \simeq \Fun_{\CL^{\rev}}(\CP, \CP) \boxtimes_{Z(\CL, \E)} \Fun_{\CL}(\CQ, \CQ) \simeq \Fun_{\E}(\CP \boxtimes_{\CL} \CQ, \CP \boxtimes_{\CL} \CQ)$ by Thm.\,\ref{thm-Fun-braided-bim}.
It is routine to check that the new coefficients $A'$ in Expl.\,\ref{expl3.22} (3), (4) are anomaly-free in $\cat$.
\end{proof}

\begin{thm}
Given any stratified sphere $\Sigma = (S^2; \Gamma)$ and an anomaly-free coefficient system $A$ in $\cat$ on $\Sigma$, we have $\int_{\Sigma} A = (\E, u_{\Sigma})$, where $u_{\Sigma}$ is an object in $\E$.
\end{thm}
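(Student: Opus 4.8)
The plan is to reduce an arbitrary anomaly-free coefficient system on $\Sigma = (S^2; \Gamma)$ to a standard minimal stratification by repeatedly applying the moves of Expl.\,\ref{expl3.22} together with the invariance (\ref{eq-4.8}) of the preceding Lemma. Since $S^2$ is simply connected, the graph $\Gamma$ can be simplified drastically: I would use (4) to erase superfluous $0$- and $1$-cells, (1) and (3) to fuse adjacent $2$-cells across their separating edges, and (2) to contract the resulting internal edges. At each step (\ref{eq-4.8}) keeps $\int_\Sigma A$ fixed and the preceding Lemma guarantees that the coefficient system stays anomaly-free. I expect this to terminate at the \emph{balloon} form: a single $0$-cell $p$ together with a single $2$-cell covering $S^2 \setminus \{p\}$ and labelled by a braided fully faithful multifusion category $\C$ over $\E$ which inherits the anomaly-free condition.

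Next I would compute the balloon by $\otimes$-excision, writing $S^2 = D_+ \cup_{S^1} D_-$ as two hemispherical $2$-disks glued along the equatorial circle. This produces
\[
\int_{\Sigma} A \;\simeq\; \int_{D_+} A \;\boxtimes_{\int_{S^1} A}\; \int_{D_-} A ,
\]
where each $\int_{D_\pm} A \simeq \C$ (the bulk label, regarded as a one-sided module) and $\int_{S^1} A$ is the value on the annular neighbourhood of the equator carrying the bulk $\C$. The key input is that this circle value is the $\E$-center, $\int_{S^1} A \simeq Z(\C, \E)$ — the two-dimensional cylinder identity — which I would extract from Cor.\,\ref{cor-cylinder-c} together with the equivalence $\Fun_{\C \boe \C^{\rev}}(\C, \C) \simeq Z(\C, \E)$ of Cor.\,\ref{cor-Fun-braided-bim}. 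Accounting for the opposite induced orientations of the two hemispheres, substitution yields the relative tensor product $\int_{\Sigma} A \simeq \C \boxtimes_{Z(\C, \E)} \overline{\C}$.

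Finally I would show $\C \boxtimes_{Z(\C, \E)} \overline{\C} \simeq \E$. The anomaly-free condition on the balloon means precisely that $\C$ is non-degenerate over $\E$ (a $\NBFCe$), equivalently that the gluing datum is closed in the sense of Def.\,\ref{defn-cd-braided-bim}; by Prop.\,\ref{prop-closed-bimodule} and Thm.\,\ref{Thm-Morita-eq} this makes $\C$ invertible over $\E$ with inverse $\overline{\C}$, so the norm $\C \boxtimes_{Z(\C,\E)} \overline{\C}$ collapses to the monoidal unit $\E$ of the symmetric monoidal $2$-category $\cat$. The symmetric monoidal functoriality of Thm.\,\ref{thm-func-E} can be invoked here to package this identification coherently. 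The distinguished object $u_\Sigma \in \E$ is then read off as the image of the units $\unit_\C \boxtimes \unit_{\overline{\C}}$ under the above chain of equivalences, i.e.\ the ground-state-degeneracy object.

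The hard part will be twofold. First, the reduction of the opening paragraph must be shown to terminate at the balloon form \emph{independently} of the initial $\Gamma$ and of the chosen orientations, and to preserve anomaly-freeness at every intermediate stage; the bookkeeping of which bimodule labels are produced by the fusion moves is delicate. Second, and more seriously, establishing $\int_{S^1} A \simeq Z(\C, \E)$ and the final collapse $\C \boxtimes_{Z(\C,\E)} \overline{\C} \simeq \E$ rigorously requires matching the excision gluing datum with the $\E$-center and genuinely using the closedness/invertibility supplied by the anomaly-free hypothesis — this is where the structural results of Section~3 (Thm.\,\ref{thm-Fun-braided-bim}, Prop.\,\ref{prop-closed-bimodule}, Thm.\,\ref{thm-func-E}) do the real work.
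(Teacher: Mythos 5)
Your overall strategy diverges from the paper's: the paper performs no excision computation on the sphere at all. Its proof is two lines — using the moves of Expl.~\ref{expl3.22}(2),(3),(4) and the fact that all loops on $S^2$ are contractible, it reduces $\Gamma$ to finitely many labelled \emph{points} inside a single $2$-cell, and then invokes \cite[Thm.\,5.29]{Wei}, which already computes the sphere with residual point defects. This exposes your first gap: the moves of Expl.~\ref{expl3.22} can fuse $0$-cells (via (4)(b) followed by (2)) but can never delete one, since a $0$-cell carries a $0$-disk algebra label and no move erases such a label. Your reduction therefore cannot terminate at a bare balloon; it terminates at a sphere with at least one $0$-cell carrying a possibly nontrivial label, and your hemispherical excision, which takes $\int_{D_\pm} A \simeq \C$ with no pointing, simply discards that label. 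The distinguished object $u_\Sigma$ is manufactured exactly from these residual labels, so reading off $u_\Sigma$ as the image of $\unit_{\C} \boxtimes \unit_{\overline{\C}}$ is incorrect in general.

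Second, and more seriously, the collapse $\C \boxtimes_{Z(\C,\E)} \overline{\C} \simeq \E$ fails as stated. Passing from $\C$ to $\overline{\C}$ reverses the braiding but does not change the underlying $Z(\C,\E)$-module structure induced by the forgetful functor, so this relative tensor product is $\C \boxtimes_{Z(\C,\E)} \C \simeq \Fun_{\E}(\C,\C)$ by \cite[Lem.\,5.23]{Wei} — the very equivalence used in the proof of Thm.~\ref{thm-Fun-braided-bim} — and $\Fun_{\E}(\C,\C)$ is Morita equivalent to $\E$ in $\cat$ but not equivalent to it. To land on $\E$ itself one must glue over the genuine circle value with each hemisphere carrying the \emph{two-sided} action of $\overline{\C} \boe \C$, use the anomaly-free hypothesis to identify $\overline{\C} \boe \C \xrightarrow{\sim} \Fun_{\E}(\C,\C)$, and then apply $\Fun_{\Fun_{\E}(\C,\C)}(\C,\C) \simeq \E$ (cf.\ Expl.~\ref{expl-fully-faith}); note also that Cor.~\ref{cor-cylinder-c} asserts $\D \boxtimes_{Z(\D,\E)} \Fun_{\D}(\N,\N) \simeq \Fun_{\E}(\N,\N)$, not $\int_{S^1} A \simeq Z(\C,\E)$, so that step has no support in the paper as cited. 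All of this bookkeeping is precisely the content of \cite[Thm.\,5.29]{Wei}, which the paper cites rather than reproves; your proposal in effect attempts to reprove it and breaks at its two key steps.
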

\begin{proof}
Applying the processes described in Expl.\,\ref{expl3.22} (2), (3), (4) repeatedly, we can reduced the graph $\Gamma$ to finitely many points on $S^2$ because all loops on $S^2$ are contractible. 
Then the result follows from \cite[Thm.\,5.29]{Wei}.
\end{proof}

\begin{thm}
\label{main-result}
Given any closed stratified surface $\Sigma$ and an anomaly-free coefficient system $A$ in $\cat$ on $\Sigma$, we have $\int_{\Sigma} A \simeq (\E, u_{\Sigma})$, where $u_{\Sigma}$ is an object in $\E$. 
\end{thm}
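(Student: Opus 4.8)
The plan is to follow the strategy used for the stratified sphere, but to accommodate the non-contractible cycles of a surface of positive genus. As there, I would combine the $\otimes$-excision equivalence (\ref{eq-4.8}) with the moves of Expl.\,\ref{expl3.22} to bring an arbitrary anomaly-free coefficient system $A$ into a standard normal form, and then identify the result with the genus-$g$ computation of \cite{Wei}. The essential new feature compared with $S^2$ is that not every loop on $\Sigma$ is contractible, so the $1$-skeleton cannot be collapsed to finitely many points; the surviving cycles record the genus.

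The first task is to simplify the stratification. The anomaly-free hypothesis forces each separating $1$-cell to be closed, hence an invertible bimodule realizing a Morita equivalence of the adjacent $2$-cell labels (Def.\,\ref{defn-invertible}), a property preserved under fusion by Prop.\,\ref{prop-closed-bimodule}. Using excision together with these Morita equivalences I can absorb such $1$-cells and merge the $2$-cells they separate via Expl.\,\ref{expl3.22}(1),(3); using Expl.\,\ref{expl3.22}(2),(4) I can contract every edge bounding a contractible region and render the surviving defects transparent, replacing each by the trivial $1$-cell obtained from the ambient $2$-cell label by forgetting its $2$-disk structure. Since all contractible loops on $\Sigma$ can be removed in this way while the non-contractible ones cannot, the procedure terminates at the minimal cell decomposition of $\Sigma$: one $0$-cell, the $2g$ generating $1$-cells, and a single $2$-cell labeled by one fully faithful fusion category $\C$ over $\E$, whose complement of the $1$-skeleton is a single polygon.

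By (\ref{eq-4.8}) the original integral now agrees with $\int_{\Sigma_g}\C$ for this standard genus-$g$ stratification, which is precisely the input treated in \cite{Wei}; its computation yields a pair whose underlying category is $\E$, so $\int_\Sigma A \simeq (\E, u_\Sigma)$ with $u_\Sigma \in \E$.

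The hard part will be this last identification: controlling the contribution of the $g$ handles and proving that the underlying category is $\E$ rather than something larger. Each handle arises by identifying two boundary circles of a lower-genus piece, i.e.\ a self-gluing along an annulus, which under excision becomes a relative tensor product over the circle-category $\int_{S^1}\C$; the real work is to show that the resulting handle operator is an endofunctor of the base $\E$, so that the genus affects only the distinguished object $u_\Sigma$ and not the category itself. I expect this to follow from the non-degeneracy built into the anomaly-free condition, which via the multiplicativity of $\FZ_{/\E}$ from Thm.\,\ref{thm-func-E} forces $Z(\C,\E)\simeq\E$ after the reduction, so that no central data can escape $\E$ when the two boundary circles of each handle are identified.
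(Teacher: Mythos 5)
There is a genuine gap at the pivot of your argument: the claim that anomaly-freeness lets you render the surviving $1$-cells transparent, so that $\int_\Sigma A$ collapses to $\int_{\Sigma_g}\C$ for a single bare coefficient $\C$. Anomaly-freeness of a $1$-cell separating $2$-cells labelled $\C$ and $\D$ says the wall $\M$ is \emph{closed}, i.e.\ $\overline{\C}\boe\D\simeq Z(\M,\E)$. By Thm.\,\ref{Thm-Morita-eq} this makes the adjacent labels Morita equivalent in $\cat$, but it makes neither $\M$ invertible as a $1$-morphism (your appeal to Def.\,\ref{defn-invertible} conflates two different notions) nor, a fortiori, equivalent to the transparent wall $\C$ itself. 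Already for $\E=\vect$ and $\C=\D=Z(\vect_{\Zb_2})$, closed $\C$-$\C$ walls correspond to Lagrangian algebras in $\overline{\C}\boxtimes\C\simeq Z(\vect_{\Zb_2\times\Zb_2})$, of which there are six; only two are invertible (braided autoequivalences of $\C$) and only one is transparent. Since none of the moves of Expl.\,\ref{expl3.22} can delete a $1$-cell carrying a nontrivial closed wall along a non-contractible loop, your normal form is not the bare genus-$g$ surface: the $2g$ loop edges generically retain nontrivial labels, so the direct citation of Wei's computation of $\int_{\Sigma_g}\C$ for an unstratified coefficient does not apply.

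Consequently the part you defer as ``the real work'' --- showing that the handle operator is an endofunctor of $\E$ --- is not a refinement but the missing core, and the multiplicativity of $\FZ_{/\E}$ (Thm.\,\ref{thm-func-E}) does not by itself deliver it: it controls the relative centers of the labels, not the self-gluing of a handle pierced by defect lines. The paper's proof never trivializes the $1$-skeleton. It applies the moves (2), (3), (4) only until they stall, then invokes the genus-reduction lemma \cite[Lem.\,5.32]{Wei} repeatedly to lower the genus to zero while carrying along the (generally nontrivial) anomaly-free defect network, and finishes with the stratified-sphere theorem proved immediately beforehand, which in turn rests on \cite[Thm.\,5.29]{Wei} after contracting $\Gamma$ to finitely many points on $S^2$. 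To repair your proposal you would either import that genus-reduction lemma in place of the transparency claim, or prove an analogous handle-reduction statement valid for a handle crossed by closed walls; excision over the annulus category $\int_{S^1}\C$ alone will not explain why the underlying category remains $\E$.
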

\begin{proof}
If $\Sigma$ is a stratified sphere, the result is true. We assume that the genus of $\Sigma$ is greater than zero. Apply the process described by Expl.\,\ref{expl3.22} (2), (3), (4) to the graph $\Gamma$ until no further reduction as possible.
Using \cite[Lem.\,5.32]{Wei} repeatedly, the genus is reduced to zero. 
\end{proof}


\end{document}